\def\Bibtex{{\rm B\kern-.05em{\sc i\kern-.025em b}\kern-0.08em T\kern-.1667em\lower.7ex\hbox{E}\kern-.125emX}}
\theoremstyle{plain}
\newtheorem{theorem}{Theorem}
\newtheorem{lemma}[theorem]{Lemma}
\newtheorem{proposition}[theorem]{Proposition}
\newtheorem{question}[theorem]{Question}
\theoremstyle{definition}
\newtheorem{definition}[theorem]{Definition}
\newtheorem{remark}[theorem]{Remark}
\theoremstyle{plain}
\newtheorem{bigthmm}{Theorem}
\numberwithin{theorem}{section}
\numberwithin{equation}{section}
\DeclareMathOperator \PSH {{\rm PSH}}
\DeclareMathOperator \tr {\textrm{tr}}
\def\om{\omega}
\def\tom{\Tilde{\omega}}
\def\f{\varphi}
\def\dc{dd^c}
\def\p{\psi}
\def\tmax{T_{\max}}
\begin{document}
	
	\title[Singularities of the Chern--Ricci flow]{Singularities of the Chern--Ricci  flow}
	\author{Quang-Tuan Dang}
	\address{The Abdus Salam International Centre for Theoretical Physics (ICTP), Str. Costiera, 11, 34151 Trieste, TS, Italy}
	\email{qdang@ictp.it}
	\date{\today}
	\keywords{Parabolic Monge--Amp\`ere equations, Chern--Ricci flow, singularities}
	\subjclass[2020]{53E30, 32U20, 32W20}

	\begin{abstract} We study the nature of finite time singularities for the Chern--Ricci flow, partially answering a question posed by Tosatti--Weinkove.
		We show that a solution of degenerate parabolic complex Monge--Amp\`ere equations starting from arbitrarily positive (1,1)-currents, is smooth outside some analytic subset, generalizing works by Di Nezza--Lu. Moreover, we extend Guedj--Lu's recent approach to establish uniform a priori estimates for degenerate complex Monge--Amp\`ere equations on compact Hermitian manifolds.
		We apply these results to study the Chern--Ricci flow on log terminal varieties starting from a current with mild singularities.
	\end{abstract}
	\maketitle
	\tableofcontents

	\section{Introduction}\label{sect: intro}
	
	Finding canonical metrics on complex varieties has been a central problem in
	complex geometry over the last few decades. Since Yau's solution to Calabi's
	conjecture, significant progress has been made in this direction. Cao~\cite{cao1985deformation} introduced a parabolic approach to provide an alternative proof of the existence of K\"ahler--Einstein metrics on manifolds with numerically trivial or ample canonical line bundle via the K\"ahler--Ricci flow. This flow is only Hamilton's Ricci flow evolving K\"{a}hler metrics.
	Motivated by the classification of complex varieties, Song--Tian~\cite{song2012canonical,song2017kahler} have proposed an {\it Analytic Minimal Model Program} to classify algebraic varieties with mild singularities using the K\"{a}hler--Ricci flow. This approach necessitates a theory of weak solutions for degenerate parabolic complex Monge--Ampère equations starting from rough initial data. 
	Since then, various results have been achieved in this direction. Song and Tian initiated the study of the K\"ahler--Ricci flow starting from an initial current with
	continuous potentials. While Guedj--Zeriahi~\cite{guedj2017regularizing} (also~\cite{to2017regularizing}) showed that
	the K\"ahler--Ricci flow could be continued from an initial current with zero Lelong numbers.
	To the author's knowledge, the best results so far have been obtained by Di Nezza--Lu~\cite{di2017uniqueness}, who successfully ran the K\"ahler--Ricci flow from an initial current with positive Lelong numbers. There have been several related works in such singular settings from a pluripotential theoretical point of view. For further details, we refer to the recent works~\cite{guedj2020pluripotential,dang2021pluripotential}
	and the references therein.  
	
	\medskip
	
	Beyond the K\"ahler setting, there more recently has been interest in the study of geometric flows in the context of
	non-K\"ahler manifolds. 
	Unlike the K\"ahler case, Hamilton's Ricci flow does not, in
	general, preserve the special Hermitian condition. It is thus natural to look for another geometric flow of Hermitian metrics, which somehow specializes in the Ricci flow in the K{\"a}hler context. Several parabolic flows on complex manifolds that preserve the Hermitian property have been proposed by Streets--Tian~\cite{streets2011hermitian,streets2010parabolic} and Liu--Yang~\cite{liu2012geometry}. Additionally, the Anomaly flow of $(n-1,n-1)$-forms has been extensively studied by Phong--Picard--Zhang~\cite{phong2018anomaly,phong2018aflows}.

	This paper is devoted to the Chern--Ricci flow, which is an evolution equation of Hermitian metrics on a complex manifold by their Chern--Ricci form, first introduced by Gill~\cite{gill2011convergence} in the setting of manifolds with vanishing first Bott--Chern class.
	Let $(X,\omega_0)$ be a compact $n$-dimensional Hermitian manifold. The {\it Chern--Ricci flow} $\omega=\omega(t)$ starting at $\omega_0$ is an evolution equation of Hermitian metrics \begin{align}\label{crf}
	\frac{\partial \omega}{\partial t}=-{\rm Ric}(\omega), \quad\omega|_{t=0}=\omega_0,
	\end{align}
	where ${\rm Ric}(\omega)$ is the {\em Chern--Ricci form} of $\omega$ associated to the Hermitian metric $g=(g_{i\bar{j}})$, which in local coordinates is given by \[{\rm Ric}(\omega)=-\dc\log\det(g) .\] Here $d=\partial+\Bar{\partial}$ and $d^c=i(\Bar{\partial}-\partial)/2$ are both real operators, so that $\dc=i\partial\Bar{\partial}$. In the K\"ahler setting, ${\rm Ric}(\omega)=iR_{j\bar{k}}dz_j\wedge d\bar{z}_k$, where $R_{j\bar{k}}$ is the usual Ricci curvature of $\omega $. Thus, if $\omega_0$ is K\"ahler i.e., $d\omega_0=0$, \eqref{crf} coincides with the K\"ahler--Ricci flow. For complex manifolds with $c_1^{\rm BC}(X)=0$, Gill~\cite{gill2011convergence} proved the longtime existence of the flow and smooth convergence of the flow to the unique Chern--Ricci flat metric in the $\partial\bar{\partial}$-class of the initial metric.
	For general complex manifolds,	Tosatti and Weinkove \cite[Theorem 1.3]{tosatti2015evolution} characterized the maximal existence time $T_{\max}$ of the flow as \[T_{\max}:=\sup\{t>0: \exists\;\psi\in\mathcal{C}^\infty(X)\; \text{with}\; \omega_0-t\textrm{Ric}(\omega_0)+\dc\psi>0\}.\]
	
	\subsection*{Finite time singularities}
Suppose that the flow~\eqref{crf} exists on a maximal interval $[0, T_{\max})$ with $T_{\max}<\infty$, so the flow develops a finite time singularity. 
	We say that the Chern--Ricci flow does not develop a singularity at a point $x\in X$ if there exist an open neighborhood $U\ni x$ and a smooth metric $\omega_{T_{\max}}$ on $U$ such that $\omega(t)$ converges to $\omega_{T_{\max}}$ in $\mathcal{C}^\infty_{\rm loc}(U)$ as $t\to T^-_{\max}$. 
	
	The following question was asked by Feldman--Ilmanen--Knopf~\cite[Question 2, page 204]{feldman2003rotationally} for the K\"ahler--Ricci flow and by Tosatti--Weinkove~\cite[Question 6.1]{tosatti2022chern}
	for the Chern--Ricci flow.
	\begin{question}\label{ques: tosatti-weinkove}
		Do singularities of the Chern--Ricci flow form a union of all analytic subvarieties of $X$ for which the volume shrinks to zero as $t\to T_{\max}$? 
	\end{question}
	In the K\"ahler setting, this question was affirmatively answered by Collins--Tosatti~\cite{collins2015kahler}. When $X$ is a compact complex surface and $\omega_0$ is Gauduchon, i.e., $\dc\omega_0=0$, the Chern--Ricci flow preserves Gauduchon (pluriclosed) condition, in particular, the limiting form $\alpha_{T_{\max}}=\omega_0-T_{\max}\textrm{Ric}(\omega_0)$ is Gauduchon. The answer is thus affirmative in this case, due to Gill--Smith~\cite{gill2015behavior} (cf. also~\cite{tosatti2013chern}) where they proved that singularities of the Chern--Ricci flow form a finite union of disjoint (-1)-curves. 

	We partially answer Question~\ref{ques: tosatti-weinkove} under two additional assumptions.
	First, we assume that the limiting form $\alpha_{\tmax}$ is {\em uniformly non-collapsing}:\begin{equation}
	\int_X(\alpha_{T_{\max}}+\dc\psi)^n\geq c_0>0,\;\forall\,\psi\in  \mathcal{C}^{\infty}(X), \alpha_{T_{\max}}+\dc\psi>0.\end{equation} 
	We mention that when $\dim X=2$ and $\omega_0$ is a Gauduchon metric on $X$, the latter condition is equivalent to $\int_X\alpha_{\tmax}^2>0$ (by Stokes' theorem). A simple example (cf.~\cite[Remark 3.1]{tosatti2013chern}) where this condition appears is the following. Let $Y$ be a compact Hermitian manifold and $\pi: X\to Y$ be the blowup of a point with exceptional divisor $E$. Let $\omega_X$ and $\omega_Y$ be Gauduchon metrics on $X$ and $Y$ respectively, and fix $T_{\max }>0$. It is known that there is a metric $h$ on the line bundle $\mathcal{O}(E)$ with curvature $R_h$ such that for $C>0$ large enough, $\omega'= C\pi^*\omega_Y-T_{\max} R_h+\dc f$ is a Hermitian metric for some $f\in\mathcal{C}^\infty(X)$. By the adjunction formula, we can choose
	$$\omega_0:=(C+1)\pi^*\omega_Y+T_{\max}\textrm{Ric}(\omega_X)+\dc f$$ which is a Gauduchon metric. 
	Hence, $\alpha_{\tmax}=\pi^*\widetilde\omega_Y+\dc \widetilde{f}$ for some Gauduchon metric $\widetilde{\omega}_Y$ and $\Tilde{f}\in\mathcal{C}^\infty(X)$; cf. ~\cite[Lemma 3.2]{tosatti2013chern} or~\cite{buchdahl00NM}. For any $\psi\in\mathcal{C}^\infty(X)$, $$\int_X(\alpha_{T_{\max}}+\dc\psi)^2=\int_X\alpha_{\tmax}^2= \int_Y\widetilde\omega_Y^2>0.$$ 
	
	The second assumption is that $X$ has \emph{the bounded mass property}, that is, there exists a Hermitian metric $\omega_X$ such that $v_{+}(\omega_X)<+\infty$ (cf. Definition~\ref{def; mass}). This condition is automatically satisfied for compact complex surfaces (cf.~\cite{guedj2022quasi2}). For further examples of non-K\"ahler manifolds in higher dimensions, we refer the reader to \cite{angella2022plurisigned}. 
	Our main theorem is the following. 
	\begin{bigthmm}\label{thmA}
		Let $(X,\omega_0)$ be an $n$-dimensional compact Hermitian manifold with bounded mass property, i.e., $v_+(\omega_0)<+\infty$. 
		Assume that the Chern--Ricci flow~\eqref{crf} starting at $\omega_0$ exists on the maximal interval $[0,T_{\max})$ with $T_{\max}<\infty$, and that the limiting form $\alpha_{T_{\max}}$ is uniformly non-collapsing:
		\begin{equation}\label{eq: unifnoncollap}
		\int_X(\alpha_{T_{\max}}+\dc\psi)^n\geq c_0>0,\;\forall\,\psi\in\mathcal{C}^{\infty}(X)\,\text{such that}\,\alpha_{T_{\max}}+\dc\psi>0.\end{equation} 
		Then as $t \to T^{-}$ the metrics $\omega(t)$ converge to $\omega_{T_{\max}}$ in $\mathcal{C}^\infty_{\rm loc}(\Omega)$ for some Zariski open set $\Omega\subset X$.  
	\end{bigthmm}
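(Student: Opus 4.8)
The plan is to recast the Chern-Ricci flow as a degenerate parabolic complex Monge-Amp\`ere equation and then feed the resulting potentials into the interior regularity theory and the uniform a priori estimates established in this paper. Set $\alpha_t:=\omega_0-t\,\mathrm{Ric}(\omega_0)$; since $\tmax$ is maximal, $\alpha_t$ stays bounded, $\alpha_t\le C\omega_0$, but $\alpha_{\tmax}=\lim_{t\to\tmax}\alpha_t$ is in general no longer positive. Writing $\omega_t=\alpha_t+\dc\varphi_t$, equation \eqref{crf} becomes
\begin{equation*}
\partial_t\varphi_t=\log\frac{(\alpha_t+\dc\varphi_t)^n}{\omega_0^n},\qquad \alpha_t+\dc\varphi_t>0,\qquad \varphi_0=0,
\end{equation*}
with a smooth solution on $[0,\tmax)\times X$. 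The parabolic maximum principle, applied to $\varphi_t$ and to suitable combinations of $\varphi_t$ and $\partial_t\varphi_t$ as in the K\"ahler setting (with care for the Hermitian torsion terms), gives the global bounds $\varphi_t\le C$ and $\partial_t\varphi_t\le C$, hence $\omega_t^n\le C\omega_0^n$, on $[0,\tmax)\times X$. In particular $t\mapsto\varphi_t-Ct$ is decreasing, so $\varphi_t$ converges pointwise, as $t\to\tmax$, to a function $\varphi_{\tmax}\in\PSH(X,\alpha_{\tmax})$, not identically $-\infty$, and we set $\omega_{\tmax}:=\alpha_{\tmax}+\dc\varphi_{\tmax}\ge 0$.

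Next I would exploit the uniform non-collapsing hypothesis \eqref{eq: unifnoncollap} together with $v_+(\omega_0)<+\infty$ to locate the Zariski open set $\Omega$ and to show that $\varphi_{\tmax}$ is well behaved on it. Condition \eqref{eq: unifnoncollap} says exactly that the total Monge-Amp\`ere mass of $\alpha_{\tmax}$ is bounded below, i.e. $\alpha_{\tmax}$ is \emph{big} in the Hermitian sense available under $v_+(\omega_0)<+\infty$; invoking the Hermitian analogue of Demailly's regularization theorem one obtains a K\"ahler current $T=\alpha_{\tmax}+\dc\rho\ge\delta\omega_0$ whose potential $\rho$ is smooth on a Zariski open set $\Omega\subset X$ and tends to $-\infty$ along the proper analytic subset $X\setminus\Omega$. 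Our extension of the Guedj-Lu uniform $L^\infty$ estimate — which bounds solutions of complex Monge-Amp\`ere equations in terms of the right-hand side alone, with no lower bound on the reference form — then forces $\varphi_{\tmax}$ to have minimal singularities relative to $\alpha_{\tmax}$; combined with the comparison $\varphi_{\tmax}\ge\rho+O(1)$ this gives $\varphi_{\tmax}\in L^\infty_{\loc}(\Omega)$, and the attendant stability estimate upgrades the pointwise convergence $\varphi_t\to\varphi_{\tmax}$ to uniform convergence on compact subsets of $\Omega$.

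Finally, on a fixed compact $K\Subset\Omega$ I would run, uniformly in $t\in[0,\tmax)$, the standard chain of local a priori estimates for the parabolic complex Monge-Amp\`ere equation: the $C^0$ bound from the previous paragraph; a uniform lower bound $\omega_t^n\ge c(K)\,\omega_0^n$, equivalently $\partial_t\varphi_t\ge -C(K)$, obtained by comparison with the K\"ahler current $T$ and the uniform estimates; a uniform Laplacian bound $\tr_{\omega_0}\omega_t\le C(K)$ from the parabolic Chern-Lu (Aubin-Yau) inequality applied to $\log\tr_{\omega_0}\omega_t-A(\varphi_t-\rho)$, where the strict positivity $T\ge\delta\omega_0$ on $\Omega$ absorbs the curvature and torsion terms of $\omega_0$ proper to the Hermitian case; and then the parabolic Evans-Krylov estimate plus Schauder bootstrapping, yielding uniform $\mathcal{C}^k(K)$ bounds for every $k$. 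By Arzel\`a-Ascoli and the already-identified limit, $\omega_t\to\omega_{\tmax}$ in $\mathcal{C}^\infty(\Omega)$, which is the assertion. The main obstacle is precisely the uniform-in-$t$ non-collapsing estimate $\omega_t^n\ge c(K)\omega_0^n$ on $K\Subset\Omega$: this is where the hypothesis \eqref{eq: unifnoncollap} is indispensable, and it is complicated in the present setting by the fact that $\alpha_{\tmax}$ is a form rather than a cohomology class, so the notions of big class, minimal singularities and ample locus must throughout be replaced by their Hermitian substitutes governed by $v_+$, with the second-order estimate carrying the extra Hermitian torsion terms.
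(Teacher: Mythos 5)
Your overall strategy coincides with the paper's: rewrite \eqref{crf} as a parabolic Monge--Amp\`ere equation for a potential $\f_t$ with reference forms $\alpha_t=\omega_0-t\,\textrm{Ric}(\omega_0)$, produce a quasi-psh $\rho$ with analytic singularities satisfying $\alpha_{\tmax}+\dc\rho\geq\delta\omega_0$, take $\Omega=\{\rho>-\infty\}$, and then run zero-, first- and second-order estimates degenerating along $\{\rho=-\infty\}$ followed by Evans--Krylov/Schauder. However, there is a genuine gap at the pivotal step. You write that the uniform non-collapsing condition \eqref{eq: unifnoncollap} ``says exactly'' that $\alpha_{\tmax}$ is big, and you then invoke Demailly regularization to get the K\"ahler current with analytic singularities. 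But $\alpha_{\tmax}$ is a nef $(1,1)$-\emph{form} that is in general \emph{not closed}, so the implication (uniformly non-collapsing $\Rightarrow$ big) is neither a definition nor an available citation: the Guedj--Lu result you are implicitly leaning on covers only closed or semi-positive forms. This implication is precisely the content of Theorem~\ref{thm: bigness}, whose proof requires Lamari's Hahn--Banach duality against Gauduchon metrics, solving the Hermitian Monge--Amp\`ere equation $(\alpha_\varepsilon+\dc u_\varepsilon)^n=c_\varepsilon\,\omega_X\wedge\eta_\varepsilon^{n-1}$ via Tosatti--Weinkove, and Chiose's trick through Popovici's inequality --- and it is exactly here, in bounding $\int_X(\alpha_\varepsilon+\dc u_\varepsilon)^{n-1}\wedge\omega_X$, that the hypothesis $v_+(\omega_0)<+\infty$ enters. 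Without supplying this argument your proof does not get off the ground, since everything downstream depends on the existence of $\rho$.

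Two secondary points. First, once $\rho$ exists, the lower bounds $\f_t\geq\rho-C$ and $\partial_t\f_t\geq C\rho-C$ follow from direct maximum-principle comparisons using the interpolation $\alpha_t+\dc\rho\geq\delta_0\omega_0$ for $t$ near $\tmax$ (Lemma~\ref{lem: bound}); your detour through ``minimal singularities'' and the $L^\infty$ theory is unnecessary and, as stated, circular (you use the comparison with $\rho$ to conclude what the comparison already gives). Second, your second-order test quantity $\log\tr_{\omega_0}\omega_t-A(\f_t-\rho)$ is not sufficient in the Hermitian setting: the torsion terms $\textrm{Re}\bigl(g^{\bar qk}(T_0)^p_{kp}\partial_{\bar q}\tr_{\omega_0}\omega\bigr)$ produce, after Cauchy--Schwarz at the critical point, a term $|\partial(\f_t-\rho)|^2_{\omega_t}$ that a linear function of $\f_t-\rho$ cannot absorb; one needs the Phong--Sturm correction $e^{-(\f_t-\rho+C_0)}$ (or a concave $\gamma(u)$) as in Lemma~\ref{lem: C2bound}. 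These are repairable, but the bigness step is the essential missing idea.
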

	The strategy of the proof is as follows. Using the uniformly non-collapsing condition of $\alpha_{\tmax}$, we show that there exists a quasi-plurisubharmonic function $\rho$ with analytic singularities such that $\alpha_{\tmax}+\dc\rho$ dominates a hermitian metric. This form is called {\em big} (cf. Definition~\ref{def: big}).
	Then $\Omega$ is the set in which $\rho$ is smooth. In particular, it is Zariski open. Next, we establish several uniform local estimates for $\omega$ near the maximal time $\tmax$, adapting techniques from~\cite{collins2015kahler,gill2011convergence}. The convergence result follows directly from these estimates. 
	
	

	\subsection*{Degenerate parabolic complex Monge--Amp\`ere equations} In the previous paragraph, we studied the behavior of the Chern--Ricci flow at finite singularity time. It is natural to ask whether the flow can pass through this singularity.
	To do this, we need to define weak solutions of the Chern--Ricci flows starting from degenerate initial currents on a compact complex variety with mild singularities.
	This leads us to consider several geometric settings arising in the minimal model program, particularly the case of complex varieties with Kawamata log terminal (klt) singularities. From an analytic point of view, this situation naturally involves densities that may blow up but still belong to $L^p$ spaces for some exponent $p > 1$ whose size depends on the algebraic nature of the singularities.
	
	\smallskip
	On a compact Hermitian $n$-manifold $(X,\omega_X)$,
	we consider the following degenerate parabolic complex Monge--Ampère equation
	\begin{equation}\label{pcmae}
	\frac{\partial\f_t}{\partial t}=\log\left[ \frac{(\theta_t+\dc\f_t)^n}{\mu}\right], 
	\end{equation} for $t\in(0,\tmax)$, where $\tmax<\infty$ and
	\begin{itemize}
		\item $\theta_t=\theta+t\chi$ is an affine family of smooth semi-positive forms, where $\chi$ is smooth (1,1) form and $\theta$ is a smooth, {\em big} (1,1) form, that is there is a quasi-plurisubharmonic function $\rho$ with analytic singularities such that
		\[\theta+\dc \rho\geq \delta\omega_X\; \text{for some}\, \delta>0;\]
		\item $\mu$ is a positive measure on $X$ of the form 
		\[\mu=e^{{\psi^+-\psi^-}}dV_X\] with $\psi^{\pm}$ quasi-plurisubharmonic functions, being smooth on a given Zariski open subset $U\subset \{\rho>-\infty\}$ and $e^{-\psi^-}\in L^p$ for some $p>1$ and $dV_X$ a smooth volume form;
		\item $\f:[0,T_{\max}]\times X\rightarrow \mathbb{R}$ is the unknown function, with $\f_t:=\f(t,\cdot)$. 
	\end{itemize}

	We first define the weak solution of the Chern--Ricci flow:
	\begin{definition}
		A family of functions $\f_t:X\to\mathbb{R}$ for $t\in (0,\tmax)$ is said to be a weak solution of the  equation~\eqref{pcmae} starting with $\f_0$ if the following hold:
		\begin{enumerate}
			\item for each $t$, $\f_t$ is $\theta_t$-plurisubharmonic on $X$;
			\item $\f_t\to \f_0$ in $L^1(X)$ as $t\to 0^{+}$;
			\item for each $\varepsilon>0$ there exists a Zariski open set $\Omega_\varepsilon\subset X$ such that the function  $(t,x)\mapsto\f(t,x)\in \mathcal{C}^\infty([\varepsilon,\tmax-\varepsilon]\times\Omega_\varepsilon)$. Furthermore, the equation~\eqref{pcmae} satisfies in the classical sense on $[\varepsilon,\tmax)\times\Omega_\varepsilon$.
		\end{enumerate}
	\end{definition}
	
	The following theorem establishes the existence of the complex Monge--Amp\`ere flow starting with an initial function $\f_0$ with small Lelong numbers.
	\begin{bigthmm}\label{thmB} Let $(X,\omega_0)$ be an $n$-dimensional compact Hermitian manifold and $\theta$ a semi-positive and big $(1,1)$-form.
		Let $\f_0$ be an $\theta$-plurisubharmonic function satisfying $p^*/2c(\f_0)<\tmax$, where $p^*$ is the conjugate exponent of $p$. Then, there exists a weak solution $\f$ of the flow~\eqref{pcmae} starting with $\f_0$ for $t\in(0,\tmax)$.
	\end{bigthmm}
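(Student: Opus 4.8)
The plan is the classical three-step scheme for degenerate parabolic complex Monge-Amp\`ere equations: regularize the data, solve the resulting non-degenerate flows, and pass to the limit using a priori estimates that are uniform in the approximation parameter. First I would use Demailly's regularization to pick smooth $\f_0^j\downarrow\f_0$ that are $(\theta+\e_j\om_X)$-plurisubharmonic with $\e_j\downarrow 0$; since $\f_0^j\ge\f_0$ the integrability index can only improve, $c(\f_0^j)\ge c(\f_0)$, so the hypothesis $p^*/2c(\f_0^j)<\tmax$ persists. Likewise I would regularize $\psi^{\pm}$ into smooth $\psi_j^{\pm}\downarrow\psi^{\pm}$ converging in $\mathcal C^{\infty}_{\loc}(U)$, set $\mu_j:=e^{\psi_j^{+}-\psi_j^{-}}\om_X^n$ (so $\mu_j/\om_X^n$ is uniformly bounded in $L^p$, because $\psi_j^{+}\le C$ and $e^{-\psi_j^{-}}\le e^{-\psi^{-}}$), and replace $\theta_t$ by $\theta_{j,t}:=\theta_t+\e_j\om_X>0$. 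With smooth data and a strictly positive family of reference forms, the flow $\partial_t\f^j=\log\big[(\theta_{j,t}+\dc\f^j)^n/\mu_j\big]$, $\f^j|_{t=0}=\f_0^j$, admits a unique smooth solution on $[0,\tmax)\times X$ by the parabolic theory on Hermitian manifolds (Gill~\cite{gill2011convergence}, Tosatti-Weinkove~\cite{tosatti2015evolution}).

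The core is to establish estimates uniform in $j$ on $[\e,\tmax-\e]\times X$. Differentiating the flow yields $(\partial_t-\Delta_{\om_t^j})\dot\f^j=\tr_{\om_t^j}\chi$ with $\om_t^j=\theta_{j,t}+\dc\f^j$, and I expect the maximum principle applied to $(t-\tfrac\e2)\dot\f^j-\f^j$, corrected by a multiple of $\rho$ to absorb the trace term via $\theta+\dc\rho\ge\d\om_X$, to give $\dot\f^j\le C(\e)$; the lower bound $\dot\f^j\ge -C(\e)$ then comes from the barrier below. For each fixed $t$, $\f_t^j$ solves the elliptic equation $(\theta_{j,t}+\dc\f_t^j)^n=e^{\dot\f_t^j}\mu_j$, whose density $e^{\dot\f_t^j}\mu_j/\om_X^n\lesssim e^{-\psi_j^{-}}\le e^{-\psi^{-}}$ is uniformly in $L^p$; hence the Hermitian analogue of Guedj-Lu's uniform estimate---which crucially dispenses with any cohomological control of volume---gives a uniform $\mathcal C^0$ bound for $\f_t^j$. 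Feeding this, the $L^p$ density bound, and the smooth non-degeneracy of $\theta_{j,t},\mu_j$ on compact subsets of the Zariski open set $\Omega:=\{\rho>-\infty\}\cap U$ into the local second-order (parabolic Schwarz lemma) and Calabi-type estimates of the Chern-Ricci flow literature, adapted to the degenerate case through $\theta+\dc\rho\ge\d\om_X$, I obtain uniform $\mathcal C^k_{\loc}(\Omega)$ bounds. Extracting a subsequence, $\f:=\lim_j\f^j$ is smooth on $(0,\tmax)\times\Omega$ and solves \eqref{pcmae} classically there, which is condition (3) with $\Omega_\e=\Omega$.

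It then remains to show $\f_t\to\f_0$ in $L^1(X)$ as $t\to0^{+}$. Fixing $s\in\big(p^*/2c(\f_0),\tmax\big)$ and a bounded $\theta_{j,s}$-plurisubharmonic $v$, the function $\underline\f_t=\tfrac{s-t}{s}\f_0^j+\tfrac ts v-C\,\tfrac{t(s-t)}{s}$ is a subsolution---by concavity of $\log\det$ together with the affine identity $\theta_{j,t}=\tfrac{s-t}{s}\theta_{j,0}+\tfrac ts\theta_{j,s}$---so $\f_t^j\ge\underline\f_t$; hence $\liminf_{t\to0^{+}}\f_t\ge\f_0$ a.e., which combined with the uniform upper bound yields the $L^1$-convergence, and with $\f_t\in\PSH(X,\theta_t)$ gives conditions (1)--(2).

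The hard part is making this work uniformly up to the \emph{sharp} threshold. The density entering the comparison near $t=0$ is of size $\sim e^{-\f_0/t}e^{-\psi^{-}}\om_X^n$, and it is precisely the hypothesis $p^*/2c(\f_0)<s$ that forces it into $L^{p'}$ for some $p'>1$ when $t$ is small (by H\"older against $e^{-\psi^{-}}\in L^p$), which is exactly what both the $L^{\infty}$ estimate and the barrier argument require; below the threshold the estimates---and indeed the very meaning of the flow near $t=0$---break down. This borderline integrability analysis, due to Di~Nezza-Lu~\cite{di2017uniqueness} in the K\"ahler case, together with the absence of volume control in the Hermitian setting (handled by extending Guedj-Lu's method), is the heart of the argument.
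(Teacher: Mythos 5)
Your overall scheme (regularize the data, solve the smooth approximating flows, pass to the limit via estimates uniform in $j$) is indeed the paper's, but there is a genuine gap at the heart of the argument: you claim a uniform upper bound $\dot\f^j\le C(\e)$ on $[\e,\tmax-\e]\times X$ and deduce from it that the density $e^{\dot\f^j_t}\mu_j$ is uniformly in $L^p$, hence a uniform $\mathcal C^0$ bound for $\f^j_t$ on all of $X$. Both claims fail precisely in the regime the theorem is about, namely when $\f_0$ has positive Lelong numbers. The maximum principle only gives $\dot\f_t\le(\f_t-\f_0)/t+n$ (Proposition~\ref{prop: upper1dot}); the term $-\f_0/t$ blows up along the polar set of $\f_0$, and no time-shift or $\rho$-correction removes it, because the flow genuinely retains positive Lelong numbers for short time ($E_c(\f_0)\subset E_{c-2nt}(\f_t)$, Section~\ref{sect: existence}). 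Consequently $\f_t$ is unbounded for small $t>0$, the density $e^{(\f_t-\f_0)/t}f$ fails to lie in $L^q$ for any $q>1$ when $t<1/(2c(\f_0))$, and your choice of a single Zariski open set $\Omega_\e=\{\rho>-\infty\}\cap U$ independent of $\e$ cannot work: the set one excises must also contain the Lelong super-level set $E_{c(\e)}(\f_0)$, and this set grows as $\e\to0$ (which is exactly why the definition of weak solution lets $\Omega_\e$ depend on $\e$).

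The missing ingredient is Demailly's equisingular approximation (Theorem~\ref{thm: dem} and Lemma~\ref{lem: equisingular}): one builds $\psi_0$, less singular than $\f_0$, smooth outside $D\cup E_{c(\e)}(\f_0)$, with $\int_Xe^{2(\psi_0-\f_0)/\e_0}\,dV<+\infty$, and the hypothesis $p^*/2c(\f_0)<T$ enters to make $\int_Xe^{-p^*\psi_0/T}\,dV$ finite. Feeding $(\theta_t+\dc\f_t)^n\le e^{n+(\f_t-\f_0)/t}f\,dV$ into the envelope-based estimate of Lemma~\ref{lem: C0_estimate} with the barrier $(1-bt/T)\psi_0$ then yields $\f_t\ge(1-bt/T)\psi_0-C$ (Theorem~\ref{thm: C0estimate}) --- a bound that degenerates along $E_{c(\e)}(\f_0)$, not a uniform one --- and the bounds on $\dot\f_t$ from below and on $\Delta\f_t$ are likewise relative to $\psi_0$ and $\psi^-$. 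Your closing paragraph gestures at the threshold $p^*/2c(\f_0)$ but inverts its role: the density $e^{-\f_0/t}e^{-\psi^-}$ becomes $L^{p'}$-integrable only for $t$ \emph{above} the threshold, not for $t$ small; below it, the equisingular barrier is what rescues the estimates, at the price of excising $E_{c(\e)}(\f_0)$. Without this device your argument only covers the case where $\f_0$ has zero Lelong numbers, which is the already known situation.
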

	Here, $c(\f_0)$ denotes the integrability index of $\f_0$, which is the supremum of positive constants $c>0$ such that $e^{-2c\f_0}$ is locally integrable. Thanks to Skoda's integrability theorem, $c(\f_0)=+\infty$ if and only if $\f_0$ has zero Lelong numbers at all points.
	
	\medskip
	
	Let us briefly outline the strategy for the proof of Theorem~\ref{thmB}. We first approximate $\f_0$ by a decreasing sequence of smooth $(\theta+2^{-j}\omega_X)$-plurisubharmonic functions $\f_{0,j}$ thanks to Demailly's regularization theorem. Similarly, $\psi^{\pm}$ are approximated by smooth quasi-plurisubharmonic functions. We consider the corresponding solution $\f_{t,j}$ to the equation~\eqref{pcmae}, with $\theta_{t,j}=\theta_t+2^{-j}\omega_X$.  Our goal is to establish several a priori estimates that allow us to take the limit as $j\to+\infty$. Precisely, we aim to show that for any $\varepsilon>0$, there is a Zariski open set $\Omega_\varepsilon\subset X$ such that for each fixed $0<T<\tmax$ and any compact subset $K\subset\Omega_\varepsilon$,
	\begin{itemize}
		\item $\|\f_{t,j}\|_{\mathcal{C}^0([\varepsilon,T]\times K)}\leq C_{\varepsilon,T,K}$;
		\item $\partial_t\f_{t,j}$ is uniformly bounded on $[\varepsilon,T]\times K$;
		\item $\Delta_{\omega_X}\f_{t,j}$ is uniformly bounded on $[\varepsilon,T]\times K$.
	\end{itemize}
	We then apply the parabolic Evans--Krylov--Trudinger theory and Schauder estimates to obtain uniform higher-order local estimates for all derivatives of $\f_{t,j}$ (cf.~\cite{gill2011convergence} for a recent account in the Chern--Ricci flow context). This allows us to pass to the limit and conclude that $$\f_{t,j}\to \f_t\in\mathcal{C}^\infty([\varepsilon, T]\times \Omega_\varepsilon)$$ as $j\to +\infty$. Furthermore, we automatically have the weak convergence $\f_t\to \f_0$ as $t\to 0^+$.  Stronger convergence results are discussed in Section~\ref{sect: conv_zero} when $\f_0$ has less singularity.
	
	\medskip
	We emphasize here that the mild assumption $p^*/2c(\f_0)<\tmax$ guarantees that the approximating flow is well-defined (i.e., not identically $-\infty$) and is crucial for the smoothing properties of the flow. As noted by Di Nezza and Lu in~\cite{di2017uniqueness} for the K\"ahler setting, without this assumption, the K\"ahler--Ricci flow may still run, but it is likely to lose its regularizing effect due to the presence of positive Lelong numbers. In such cases, they highlighted that the main challenge lies in establishing the a priori $\mathcal{C}^0$-estimate. Their approach relies on Kołodziej's method, which uses generalized Monge--Ampère capacities. In contrast, our approach follows the recent developments of Guedj and Lu~\cite{guedj2021quasi1,guedj2021quasi}, which have the advantage of being applicable to degenerate (1,1) forms in the non-K{\"a}hler context.

	\medskip
	We finally apply the previous analysis to treat the case of mildly singular varieties. This allows us to define a good notion of the weak Chern--Ricci flow on complex compact varieties with log terminal singularities. We will discuss it in Section~\ref{sect: crf_lt} and prove the following. 
	\begin{bigthmm}\label{thmC}
		Let $Y$ be a compact complex variety with log terminal singularities. Assume that $\theta_0$ is a Hermitian metric such that 
		\begin{equation*}
		T_{\rm max}:=\sup \{t>0:\, \exists\; \psi\in\mathcal{C}^\infty(Y) \,\text{such that}\; \theta_0-t\textrm{Ric}(\theta_0)+\dc\psi >0 \}>0.
		\end{equation*} Assume that $S_0=\theta_0+\dc\phi_0$ is a positive $(1,1)$-current with sufficiently small slopes. Then, there exists a family $(\omega_t)_{t\in[0,\tmax)}$ of positive (1,1) current  on $Y$ starting with $S_0$ such that
		\begin{enumerate}
			\item $\omega_t=\theta_0-t\textrm{Ric}(\theta_0)+\dc\f_t$ are positive (1,1) currents;
			\item $\omega_t\to S_0$ weakly as $t\to 0^+$;
			\item for each $\varepsilon>0$ there exists a Zariski open set $\Omega_\varepsilon$ such that on $[\varepsilon,\tmax)\times \Omega_\varepsilon$,  $\omega$ is smooth and
			\[\frac{\partial\omega}{\partial t}=-\textrm{Ric}(\omega).\]
		\end{enumerate}
	\end{bigthmm}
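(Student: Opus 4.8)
\emph{Strategy of the proof of Theorem~\ref{thmC}.} The plan is to reduce to Theorem~\ref{thmB} by passing to a log resolution, exactly as one reduces the K\"ahler--Ricci flow on klt varieties to a degenerate parabolic equation on a smooth model \cite{song2012canonical,di2017uniqueness}. Fix a log resolution $\pi\colon X\to Y$, biholomorphic over $Y_{\mathrm{reg}}$, with exceptional locus $E=\bigcup_iE_i$ a simple normal crossing divisor and some rational combination $\sum_ic_iE_i$ ($c_i\ge0$) whose negative is $\pi$-relatively ample; write $K_X=\pi^*K_Y+\sum_ia_iE_i$, so $a_i>-1$ for all $i$ because $Y$ is log terminal. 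Fix a smooth metric on a multiple of $K_Y$, with associated adapted measure $\mu_Y$ on $Y$ and Chern--Ricci form $\textrm{Ric}(\mu_Y)$, a smooth closed real $(1,1)$-form on $Y$; then $\mu_Y$ has finite mass precisely because $Y$ is klt, and $\pi^*\mu_Y=e^{\psi^+-\psi^-}$ (in the normalisation of~\eqref{pcmae}) with $\psi^\pm$ quasi-plurisubharmonic, with analytic singularities along $E$, smooth on $U:=X\setminus E$, and $e^{-\psi^-}\in L^p(X)$ for some $p>1$ depending only on $\min_ia_i>-1$. The form $\theta:=\pi^*\theta_0$ is smooth, semi-positive and big ($\int_X\theta^n=\int_Y\theta_0^n>0$, $\pi$ a modification), so there is a quasi-plurisubharmonic $\rho$ with analytic singularities along a $\pi$-exceptional subvariety contained in $E$ (hence $U\subset\{\rho>-\infty\}$) with $\theta+\dc\rho\ge\delta\omega_X$. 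Finally take the affine path $\theta_t:=\theta+t\chi$ with $\chi:=-\pi^*\textrm{Ric}(\mu_Y)$; the Tosatti--Weinkove description of $\tmax$, pulled back, shows $\theta_t+\dc\psi_t\ge0$ on $[0,\tmax)$ for a smooth family $\psi_t$, which we absorb into the unknown. All hypotheses of~\eqref{pcmae} then hold, with this $\theta_t$ and $\mu=\pi^*\mu_Y$.

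Set $\tilde\varphi_0:=\pi^*\phi_0$, a $\theta$-plurisubharmonic function on $X$, and read the hypothesis that $S_0$ has \emph{sufficiently small slopes} as the requirement $p^*/\bigl(2\,c(\tilde\varphi_0)\bigr)<\tmax$ for the above $p$ --- by Skoda's theorem, an explicit bound on the Lelong numbers of $\phi_0$ depending only on $(Y,\theta_0)$ and the discrepancies $a_i$. Theorem~\ref{thmB} then produces a weak solution $\tilde\varphi=(\tilde\varphi_t)_{t\in(0,\tmax)}$ of~\eqref{pcmae} on $X$: each $\tilde\varphi_t$ is $\theta_t$-psh, $\tilde\varphi_t\to\tilde\varphi_0$ in $L^1(X)$ as $t\to0^+$, and for every $\varepsilon>0$ there is a Zariski open $\Omega_\varepsilon\subset X$ with $\tilde\varphi\in\mathcal C^\infty([\varepsilon,\tmax)\times\Omega_\varepsilon)$ solving~\eqref{pcmae} classically there.

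The last step is to push down. Since $\tilde\varphi_t$ is $\theta_t$-psh and $\pi$ contracts connected compact fibres, $\tilde\varphi_t$ is constant along the fibres of $\pi$, so $\tilde\varphi_t=u_t\circ\pi$ for a unique function $u_t$ on $Y$; by the descent lemma for quasi-psh functions under modifications (and the pluripotential theory on klt varieties, to cross $Y_{\mathrm{sing}}$), $u_t$ is $(\theta_0-t\,\textrm{Ric}(\mu_Y))$-psh on $Y$. Put $\omega_t:=\theta_0-t\,\textrm{Ric}(\mu_Y)+\dc u_t$. Then $\omega_t\ge0$ (push-forward by $\pi$ of $\theta_t+\dc\tilde\varphi_t\ge0$); $\omega_t\to S_0$ weakly as $t\to0^+$ (from the $L^1$-convergence on $X$); and, with $\Omega'_\varepsilon:=\pi\bigl(\Omega_\varepsilon\cap(X\setminus E)\bigr)$ Zariski open in $Y$, $\omega$ is smooth on $[\varepsilon,\tmax)\times\Omega'_\varepsilon$, where
\[
\partial_t\omega_t=-\textrm{Ric}(\mu_Y)+\dc\,\partial_t u_t=-\textrm{Ric}(\mu_Y)+\dc\log\frac{\omega_t^n}{\mu_Y}=-\textrm{Ric}(\omega_t),
\]
the last equality being the pointwise identity $\dc\log(\omega_t^n/\mu_Y)=\textrm{Ric}(\mu_Y)-\textrm{Ric}(\omega_t)$ on $Y_{\mathrm{reg}}$. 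Setting $\varphi_t:=u_t+t\log(\mu_Y/\theta_0^n)$ rewrites $\omega_t$ as $\theta_0-t\,\textrm{Ric}(\theta_0)+\dc\varphi_t$ as in the statement, the correction being smooth on $Y_{\mathrm{reg}}$ and $L^1$ on $Y$, so it disturbs neither the convergence nor the regularity; assertions (1)--(3) all follow.

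\emph{Main obstacle.} The analytic substance is entirely in Theorem~\ref{thmB} --- the uniform $\mathcal C^0$-estimate and the locally uniform higher-order estimates over the $\Omega_\varepsilon$ --- invoked here as a black box. Within the present reduction, the delicate point is the bookkeeping in the first paragraph: one must choose the log resolution, the metric on $K_Y$ (hence $\mu_Y$ and $\textrm{Ric}(\mu_Y)$), the $\pi$-ample combination $\sum c_iE_i$, and the potential $\rho$ in a mutually compatible way, so that \emph{every} hypothesis of~\eqref{pcmae} holds at once --- in particular that $\pi^*\theta_0$ is big with $\rho$ singular only along a $\pi$-exceptional set, that the finitely many discrepancies $a_i>-1$ yield one exponent $p>1$ with $e^{-\psi^-}\in L^p$, and that the $\theta_t$ are semi-positive on $X$ for $t<\tmax$ --- and so that, restored to $Y_{\mathrm{reg}}$, the equation is genuinely the Chern--Ricci flow. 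The remaining ingredients --- the descent lemma across $Y_{\mathrm{sing}}$ (so $\omega_t$ is a current on all of $Y$, not only on $Y_{\mathrm{reg}}$) and the descent of the good open sets $\Omega_\varepsilon$ to Zariski open subsets of $Y$ --- are routine once the klt pluripotential theory is available.
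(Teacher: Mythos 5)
Your proposal is correct and follows essentially the same route as the paper: pass to a log resolution, identify $\mu=\pi^*\mu_{Y,h}=e^{\psi^+-\psi^-}dV$ with $e^{-\psi^-}\in L^p$ from the klt condition, note that $\pi^*\theta_0$ is big with $\rho$ singular along the exceptional locus, translate ``small slopes'' into $p^*/(2c(\pi^*\phi_0))<T_{\max}$ via Skoda, and invoke Theorem~\ref{thmB} before descending to $Y$. The only cosmetic difference is your justification of bigness of $\pi^*\theta_0$ by positivity of volume, where the paper instead cites the result that pullbacks of Hermitian forms under modifications are big; the conclusion and the rest of the bookkeeping coincide.
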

	This generalizes previous results of Song--Tian~\cite{song2017kahler}, Guedj--Zeriahi~\cite{guedj2017degenerate}, T\^o~\cite{to2017regularizing}, DiNezza--Lu~\cite{di2017uniqueness}, Guedj--Lu--Zeriahi~\cite{guedj2020pluripotential} and the author \cite{dang2021pluripotential} to the non-K\"ahler case, and of \cite{to2018regularizing,nie2017weak} and the author~\cite{dang2021chern} to more degenerate initial data.

	\subsection*{Organization of the paper} We establish  a priori estimates in Section~\ref{sect: estimate}, which will be used to prove Theorem~\ref{thmB} in Section~\ref{sect: existence}. 
	While Theorem~\ref{thmA} will be proved in Section~\ref{sect: finite_sing}, studying the behavior of the Chern--Ricci flow at non-collapsing finite time singularities.
	In Section~\ref{sect: crf_lt}, we apply these tools to prove the existence of the weak Chern--Ricci flow with initial degenerate data on compact complex varieties with log terminal singularities, proving Theorem~\ref{thmC}.
	
	\subsection*{Acknowledgement} 
	The author is grateful to Chung-Ming Pan for carefully reading the first draft. Special thanks go to Jacopo Stoppa and T\^at-Dat T\^o for valuable comments. The author would also like to thank the anonymous referees for their comments and suggestions.
	This work is partially supported by the project PARAPLUI ANR-20-CE40-0019.
	\section{Preliminaries}

	\subsection{Recap on pluripotential theory}
	Let $X$ be a compact complex manifold of dimension $n$, equipped with a Hermitian metric $\omega_X$. We fix $\theta$ a smooth semi-positive real $(1,1)$-form on $X$.
	\subsubsection{Quasi-plurisubharmonic functions and Lelong numbers}
	A function $u\in L^1(X)$ is quasi-plurisubharmonic (quasi-psh for short) if it is locally given as the sum of a smooth function and a plurisubharmonic (psh for short) function.
	\begin{definition}
		A quasi-psh function $\f:X\to[-\infty,+\infty)$ is called {\em $\theta$-plurisubharmonic} ({\em $\theta$-psh} for short) if it satisfies $\theta_\f:=\theta+\dc\f\geq 0$ in the weak sense of currents.
		We let $\PSH(X,\theta)$ denote the set of all $\theta$-psh functions that are not identically $-\infty$.
	\end{definition} The set $\PSH(X,\theta)$ is endowed with the $L^1(X)$-topology. By Hartogs' lemma, the map $\f \mapsto \sup_X \f$ is continuous with respect to this topology. Since the set of closed positive currents in a fixed  $\dc$-class is compact (in the weak topology), it follows that the set of $\f\in\PSH(X,\theta)$, with $\sup_X \f=0$ is compact. 
	We refer the reader to~\cite{demaillycomplex,guedj2017degenerate} for basic properties of $\theta$-psh functions. 
	\smallskip
	
	Quasi-psh functions are, in general, singular, and a convenient way to measure their singularities is the Lelong numbers.
	\begin{definition}
		Let $x_0\in X$. Fixing a holomorphic chart $x_0\in V_{x_0}\subset X$,  the {\em Lelong number} $\nu(\f,x_0)$ of a quasi-psh function $\f$ at  $x_0\in X$ is defined as follows:
		\begin{align*}
		\nu(\f,x_0):=\sup\{\gamma\geq 0: \f(z)\leq \gamma\log\|z-x_0\|+O(1), \; \text{on}\; V_{x_0}\}.  
		\end{align*}
	\end{definition}
	We remark here that this definition does not depend on the choice of local holomorphic charts.
	In particular, if $\f=\log|f|$ in a neighborhood $V_{x_0}$ of $x_0$, for some holomorphic function $f$, then $\nu(\f,x_0)$ is equal to the vanishing order $\textrm{ord}_{x_0}(f):=\sup\{ k\in\mathbb{N}:D^\gamma f(x_0)=0,\forall\, |\gamma|<k \}$.
	\smallskip
	
	In some contexts, it is more convenient to work with the integrability index rather than the Lelong numbers. The \emph{integrability index} of a quasi-psh function $\f$ at a point $x\in X$ is defined by \begin{equation*}
	c(\f,x):=\sup\{c>0: e^{-2c\f}\in L^1(V_x) \}
	\end{equation*} where $V_x$ is some neighborhood around $x$. 
	This definition does not depend on the choice of the open neighborhood $V_x$.
	We denote by $c(\f)$ the infimum of $c(\f,x)$ for all $x\in X$. Since $X$ is compact, it follows that $c(\f)>0$.
	
	Skoda's integrability theorem (cf.~\cite[Chapter 2]{guedj2017degenerate}) yields the following "optimal"  relation between the Lelong number of a quasi-psh function $\f$ at a point $x_0\in X$ and the local integrability  index of $\f$ at $x_0$: \begin{equation}
	\frac{1}{\nu(\f,x_0)}\leq c(\f,x_0)\leq\frac{n}{\nu(\f,x_0)}.
	\end{equation} 
	In  particular, $c(\f)=+\infty$ if  and only if $\nu(\f,x)=0$ for all $x\in X$.
	
	\subsubsection{Monge--Amp\`ere measures}
	
	The complex Monge--Amp\`ere measure $(\theta+\dc u)^n$ is well-defined for any $\theta$-psh function $u$ which is bounded, as follows from the Bedford--Taylor theory: if $\beta=\dc\rho$ is a K\"ahler form such that $\beta>\theta$  in a local open chart $U\subset X$, then $u$ is $\beta$-psh and the positive currents $(\beta+\dc u)^j$ are well-defined for $1\leq j\leq n$. Thus, the {\em complex Monge--Amp\`ere measure}
	\begin{equation*}
	(\theta+\dc u)^n:=\sum_{j=0}^n \binom{n}{j}(\beta+\dc u)^j\wedge (\theta-\beta)^{n-j}
	\end{equation*} is a positive measure on $X$.
	Indeed, by Demailly's regularization theorem, we can approximate $u$ by a decreasing sequence of smooth $(\theta+\varepsilon_j\omega_X)$-psh functions $u_j$. Consequently, $(\theta+\dc u)^n$ is the limit of positive measures $(\theta+\varepsilon_j\omega_X+\dc u_j)^n$, ensuring that $(\theta+\dc u)^n$ is positive.
	
	This definition does not depend on the choice of $\beta$ by the same arguments.
	We refer to~\cite{dinew2012pluripotential} for an adaptation of~\cite{bedford1976dirichlet, bedford1982new} to the Hermitian context.
	We recall the following maximum principle.
	\begin{lemma}\label{lem: max-princ}
		Let $\f,\p$ be bounded $\theta$-psh functions such that $\f\leq \p$. Then
		\begin{equation*}
		\mathbf{1}_{\{\f=\p\}}(\theta+\dc\f)^n\leq\mathbf{1}_{\{\f=\p\}}(\theta+\dc\p)^n.
		\end{equation*}
	\end{lemma}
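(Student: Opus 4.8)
The plan is to reduce the statement to the classical Bedford--Taylor maximum principle via the plurifine locality of the Monge--Amp\`ere operator. Since both sides of the asserted inequality are positive Radon measures and the question is local, I would fix a holomorphic chart $U\subset X$ together with a smooth function $\rho$ on $U$ such that $\beta:=\dc\rho$ is a K\"ahler form with $\beta>\theta$ on $U$. Then $\tilde\f:=\rho+\f$ and $\tilde\p:=\rho+\p$ are bounded psh functions on $U$ with $\tilde\f\leq\tilde\p$ and $\{\tilde\f=\tilde\p\}=\{\f=\p\}$, and by the very definition recalled above one has $(\theta+\dc\f)^n=\sum_{j}\binom{n}{j}(\dc\tilde\f)^j\wedge(\theta-\beta)^{n-j}$, and similarly for $\p$. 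Consequently, two facts from Bedford--Taylor theory in the Hermitian setting (see~\cite{bedford1982new,dinew2012pluripotential}) transfer to bounded $\theta$-psh functions: (i) \emph{plurifine locality}, i.e. if two bounded $\theta$-psh functions agree on a plurifine open set $\mathcal{O}$, then their Monge--Amp\`ere measures agree on $\mathcal{O}$; and (ii) \emph{continuity along monotone sequences}, i.e. $(\theta+\dc u_k)^n\to(\theta+\dc u)^n$ weakly whenever $u_k\uparrow u$ or $u_k\downarrow u$ are bounded $\theta$-psh. I would also use that the maximum of two $\theta$-psh functions is $\theta$-psh and that these Monge--Amp\`ere measures charge no pluripolar set.

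With these tools the argument is short. For $\delta>0$ set $v_\delta:=\max(\f,\p-\delta)$, a bounded $\theta$-psh function on $U$. Since $\f-\p$ is a difference of bounded quasi-psh functions it is continuous for the plurifine topology, so the set $\{\f>\p-\delta\}$ is plurifine open; it contains $\{\f=\p\}$, and on it $v_\delta=\f$. Plurifine locality therefore gives
\[
\mathbf{1}_{\{\f=\p\}}(\theta+\dc v_\delta)^n=\mathbf{1}_{\{\f=\p\}}(\theta+\dc\f)^n ,
\]
hence $(\theta+\dc v_\delta)^n\geq\mathbf{1}_{\{\f=\p\}}(\theta+\dc\f)^n$ as positive measures on $U$. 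Since $\f\leq\p$, we have $v_\delta\uparrow\max(\f,\p)=\p$ as $\delta\downarrow 0$, so $(\theta+\dc v_\delta)^n\to(\theta+\dc\p)^n$ weakly by monotone continuity. Testing against a nonnegative $f\in\mathcal{C}_c(U)$ and letting $\delta\downarrow 0$ yields $(\theta+\dc\p)^n\geq\mathbf{1}_{\{\f=\p\}}(\theta+\dc\f)^n$ on $U$; multiplying by $\mathbf{1}_{\{\f=\p\}}$ and covering $X$ by such charts gives the claim.

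The delicate point, and the one I would be most careful about, is step (i): the set $\{\f=\p\}$ (and $\{\f>\p-\delta\}$) is not open for the Euclidean topology since $\f,\p$ are merely upper semicontinuous, so the naive locality of the Monge--Amp\`ere operator does not apply; one genuinely needs the plurifine locality of Bedford--Taylor, which in turn rests on the quasi-continuity of bounded quasi-psh functions and on the fact that $(\theta+\dc u)^n$ puts no mass on pluripolar sets. Once this is granted for bounded $\theta$-psh functions — which, as indicated above, follows from the local reduction to genuine psh potentials $\rho+u$ and the classical Bedford--Taylor theory — the remaining steps are routine.
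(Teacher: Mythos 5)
Your argument is correct, and it is essentially the standard proof behind the reference the paper gives: the paper itself only cites \cite[Theorem 3.23]{guedj2017degenerate} and \cite[Lemma 1.2]{guedj2022quasi2}, whose brief proof is exactly your scheme of comparing $\f$ with $\max(\f,\p-\delta)$ on the plurifine open set $\{\f>\p-\delta\}\supset\{\f=\p\}$ and letting $\delta\downarrow 0$ via monotone continuity. Your additional care in reducing the Hermitian-setting operator $(\theta+\dc u)^n$ to genuine psh potentials $\rho+u$ so as to invoke Bedford--Taylor plurifine locality is exactly the right justification and matches the paper's local definition of the Monge--Amp\`ere measure.
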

	\begin{proof} This is a direct consequence of Bedford--Taylor's maximum principle; see~\cite[Theorem 3.23]{guedj2017degenerate}.
		We refer the reader to~\cite[Lemma 1.2]{guedj2022quasi2} for a brief proof.
	\end{proof}
	\subsubsection{Positivity assumptions}
	
	For our purposes, we need to assume a slightly stronger positivity property of the form $\theta$ in the sense of~\cite{guedj2021quasi}. 
	\begin{definition}\label{def; mass}
		We consider
		\[v_{-}(\theta):=\inf\left\{\int_X(\theta+\dc\f)^n: \f\in\PSH(X,\theta)\cap L^\infty(X) \right\}\]
		and \[v_{+}(\theta):=\sup\left\{\int_X(\theta+\dc\f)^n: \f\in\PSH(X,\theta)\cap L^\infty(X) \right\}.\]
	\end{definition}
We emphasize that when $\theta$ is Hermitian, the supremum and infimum in the definition of these quantities can be taken over $\PSH(X,\theta)\cap \mathcal{C}^\infty(X)$ due to Demailly's regularization theorem and Bedford--Taylor's convergence results. 
	\begin{definition}
		A function $\rho$ is said to have {\em analytic singularities} if there exists a constant $c>0$ such that locally on $X$, $$\rho=c\log\sum_{j=1}^N|f_j|^2+O(1)$$ where the $f_j$'s are holomorphic functions.
	\end{definition}
	\begin{definition}\label{def: big}
		We say $\theta$ is {\em big}  if there exists a $\theta$-psh function with analytic singularities such that $\theta+\dc \rho\geq \delta\omega_X$ for some $\delta>0$. We let $\Omega$ denote the open Zariski set in which $\rho$ is locally bounded.
	\end{definition}	
	Such a form appears in some contexts of complex differential geometry. For example, if $Y$ is a compact complex space endowed with a Hermitian metric $\omega_Y$ and $\pi: X\to Y$ is a log resolution of singularities, then the form $\theta:=\pi^*\omega_Y$ is big; see, e.g.,~\cite[Proposition 3.2]{fino2009blow}. Moreover, we can find  a $\theta$-psh function $\rho$ with analytic singularities such that $\theta+\dc\rho\geq \delta\omega_X$, and
	\begin{equation*}
	\Omega=\{\rho>-\infty\}=X\setminus \textrm{Exc}(\pi)=\pi^{-1}(Y_{\rm reg})\simeq Y_{\rm reg}.
	\end{equation*}
	
	\subsubsection{Envelopes}
	Recall that a  Borel set $E\subset X$ is {\em (locally) puripolar} if for each $x\in X$, there exists an open neighborhood $U$ of $x$ and a psh function $u$ on $U$ such that $E\cap U\subset \{u=-\infty\}$. 
	As follows from \cite[Theorem 1.1]{vu2019pluripolar} or \cite[Lemma 2.6]{guedj2022quasi2}, the set $E$ is globally pluripolar, i.e., there exists  $u\in \PSH(X,\omega_X)$ such that $E\subset\{u=-\infty \}$. Since $\theta$ is big, the function $u':=\delta u+\rho$ is $\theta$-psh and its $-\infty$-locus contains $E$.

	\begin{definition}
		Given a measurable function $h:X\to\mathbb{R}$, we define the {\em $\theta$-psh envelope} of $h$ by
		\begin{equation*}
		P_\theta(h):=(\sup\{u\in\PSH(X,\theta): u\leq h\;\text{on}\, X \})^*
		\end{equation*} where the star means that we take the upper semi-continuous regularization.
	\end{definition}
We note that this definition is equivalent to the one given in \cite[Definition 2.2]{guedj2022quasi2}; cf. \cite[Corollary 2.7]{guedj2022quasi2}.

	We have the following result, established in~\cite[Theorem 2.3]{guedj2022quasi2}.
	
	\begin{theorem}\label{thm: envelope}
		If $h$ is bounded from below, quasi-lower-semi-continuous, and  $P_\theta(h)<+\infty$, then
		\begin{enumerate}
			\item $P_\theta(h)$ is a bounded $\theta$-psh function;
			\item $P_\theta(h)\leq h$ in $X\setminus P$, for some pluripolar set $P$;
			\item $(\theta+\dc P_\theta(h))^n$ is concentrated on the contact set $\{P_\theta(h)=h\}$.
		\end{enumerate}
	\end{theorem}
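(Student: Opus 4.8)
The statement to prove is Theorem \ref{thm: envelope} on the $\theta$-psh envelope $P_\theta(h)$ for $h$ bounded below, quasi l.s.c., with $P_\theta(h) < +\infty$.

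\medskip

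\textbf{Plan of proof.} The strategy is the classical Bedford--Taylor / Perron envelope argument, carried over to the Hermitian setting where $\theta$ is merely semi-positive, combined with a balayage-type argument to identify where the Monge--Amp\`ere mass concentrates. I proceed in three steps corresponding to the three assertions.

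\smallskip

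\emph{Step 1: $P_\theta(h)$ is a bounded $\theta$-psh function.} First I note that the defining family $\mathcal{F} = \{u \in \PSH(X,\theta): u \le h \text{ on } X\}$ is nonempty. Since $h$ is bounded below, say $h \ge -M$, and since $\theta$ is semi-positive we may fix any $\beta = \dc\rho_0$ with $\beta > \theta$ on a suitable cover (using that envelopes only need the existence of \emph{one} competitor); alternatively one uses that $P_\theta(h) < +\infty$ is assumed, so by upper semicontinuity of the regularization the usc regularization is a genuine $\theta$-psh function by Choquet's lemma and the standard fact that an usc regularized supremum of a locally uniformly bounded-above family of $\theta$-psh functions is $\theta$-psh. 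Boundedness from above follows since every $u \in \mathcal{F}$ satisfies $u \le \sup_X h < +\infty$ if $h$ is bounded above; in general $P_\theta(h) < +\infty$ is the hypothesis, and boundedness below of $P_\theta(h)$ will follow from Step 2 together with $h \ge -M$ once we know $P_\theta(h) \le h$ off a pluripolar set — but more directly, since $h$ is quasi l.s.c. and bounded below, for each $\e > 0$ there is an open set $G_\e$ with $\Capa(G_\e)$ small (or of small measure) such that $h$ is l.s.c. hence bounded below by a continuous function on $X \setminus G_\e$, and a gluing/comparison argument gives a uniform lower bound on $P_\theta(h)$.

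\smallskip

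\emph{Step 2: $P_\theta(h) \le h$ outside a pluripolar set.} This is the quasi-lower-semicontinuity input: write $h$ as a limit of an increasing sequence of usc (indeed continuous) functions off pluripolar sets, or directly use the definition of quasi l.s.c.\ to produce, for each $\e$, a continuous $h_\e \le h$ on $X \setminus E_\e$ with $E_\e$ pluripolar and $h_\e \uparrow h$ quasi-everywhere. Then $P_\theta(h_\e) \le P_\theta(h)$ and one shows $P_\theta(h_\e) \le h_\e$ everywhere by the usual argument for continuous obstacles (if $P_\theta(h_\e)(x_0) > h_\e(x_0)$ at some point, continuity gives a small ball where $P_\theta(h_\e) > h_\e$ and one can raise $P_\theta(h_\e)$ slightly there, contradicting maximality — this uses only that $\theta$ is a form, not positivity). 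Taking $\e \to 0$ and using that a countable union of pluripolar sets is pluripolar yields $P_\theta(h) \le h$ off a pluripolar set $P$.

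\smallskip

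\emph{Step 3: $(\theta + \dc P_\theta(h))^n$ concentrates on $\{P_\theta(h) = h\}$.} Set $u = P_\theta(h)$. On the open set $\{u < h\}$ I want to show $(\theta + \dc u)^n = 0$. The standard balayage argument: fix a small coordinate ball $B \Subset \{u < h\}$ (using $h$ l.s.c.\ off a pluripolar set and $u$ usc, the set $\{u < h\}$ is "open mod pluripolar", so one works on genuinely open pieces), solve the Dirichlet problem $(\theta + \dc v)^n = 0$ on $B$ with $v = u$ on $\partial B$, getting $v \ge u$ on $B$ and $v$ still $\le h$ there (shrinking $B$ so that the gap $h - u \ge c > 0$ persists by l.s.c.). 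Then the function equal to $v$ on $B$ and $u$ outside is a competitor in $\mathcal{F}$ lying above $u$, forcing $v = u$ on $B$, hence $(\theta + \dc u)^n = 0$ on $B$; covering $\{u<h\}$ by such balls gives the claim. This is exactly where Theorem \ref{thm: envelope} is invoked as already established in \cite{guedj2022quasi2}, so in the paper this may simply be cited; if reproving, the solvability of the homogeneous Dirichlet problem for $(\theta + \dc \cdot)^n$ in small balls (where $\theta + \dc\rho_0$ is Hermitian) is the technical backbone.

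\smallskip

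\textbf{Main obstacle.} The delicate point is Step 3 in the Hermitian, semi-positive setting: one must handle the fact that $\theta$ is only semi-positive (so $\theta + \dc u$ need not dominate a metric globally), which is circumvented by localizing to balls where one adds $\dc\rho_0$ to make things strictly positive, and the fact that $h$ is only quasi l.s.c.\ (not l.s.c.), so the contact set and its complement are only controlled up to pluripolar sets — one needs the fine-topology / capacity machinery of Bedford--Taylor (negligible sets are pluripolar) to conclude that the Monge--Amp\`ere measure, which puts no mass on pluripolar sets, indeed lives on $\{u = h\}$. Since all of this is precisely the content of \cite[Theorem 2.3]{guedj2022quasi2}, the proof in the paper reduces to citing that reference, and the plan above is the reconstruction one would give if a self-contained argument were required.
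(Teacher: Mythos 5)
First, a point of comparison: the paper does not prove this statement at all. It is quoted verbatim from Guedj--Lu (the sentence preceding the theorem reads ``which has been established in [Theorem 2.3, guedj2022quasi2]''), so there is no in-paper argument to match your sketch against; you correctly anticipate this at the end of your proposal. Judged on its own merits as a reconstruction, your outline follows the classical Perron/balayage scheme, which is the right family of ideas, but two steps as written have real problems.

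In Step 2 you conflate small-capacity sets with pluripolar sets: quasi-l.s.c.\ gives you open sets $G_\e$ of small capacity off which $h$ is l.s.c., and these are not pluripolar, so ``$h_\e \uparrow h$ quasi-everywhere with $E_\e$ pluripolar'' does not follow from the hypothesis. The standard (and much shorter) route to (2) is Choquet's lemma plus Bedford--Taylor: write $P_\theta(h)=(\sup_j u_j)^*$ for a countable family $u_j\le h$; the set where the usc regularization exceeds the pointwise sup is negligible, hence pluripolar, and (2) follows without using quasi-l.s.c.\ at all. (Similarly, in Step 1 the lower bound is immediate: since $\theta\ge 0$ the constant $\inf_X h>-\infty$ is itself a competitor, so no capacity argument is needed.) The more serious gap is in Step 3: your balayage requires solving the homogeneous Dirichlet problem $(\theta+\dc v)^n=0$ on small balls with a background form $\theta$ that is only semi-positive and, crucially, \emph{not closed}, so $\theta$ is not locally $\dc\rho_0$ and the problem does not reduce to the flat Bedford--Taylor Dirichlet problem; you acknowledge this is ``the technical backbone'' but do not supply it. The proof in the cited reference avoids this by an approximation scheme (regularizing $h$, using the resolution of non-degenerate Monge--Amp\`ere equations on Hermitian manifolds \`a la Tosatti--Weinkove/Berman and passing to the limit, together with the maximum principle of Lemma~\ref{lem: max-princ}), rather than by local balayage. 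As it stands, Step 3 is the missing idea rather than a routine verification.
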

	
	The following $\mathcal{C}^0$-estimate is crucial in the sequel.
	\begin{lemma}\label{lem: C0_estimate} Let $\theta$ be a smooth real semi-positive and big (1,1)-form.
		Assume $\f\in\PSH(X,\theta)\cap L^\infty(X)$ satisfies \[(\theta+\dc\f)^n\leq e^{A\f-g}fdV_X,\] where $A>0$ and $f$, $g$ are measurable functions such that $e^{A\psi-g}f\in L^q(X)$ with $q>1$, for some $\psi\in\PSH(X,\delta\theta)$, with $\delta\in (0,1)$. Then we have the following estimate 
		\begin{equation*}
		\f\geq \psi-C
		\end{equation*} where $C$ is a positive constant only depending on $n$, $A$, $\delta$, $\theta$, $q$ and an upper bound for $\int_X e^{q(A\psi-g)}f^qdV_X$.
	\end{lemma}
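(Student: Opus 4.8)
The plan is to run a Kołodziej-type argument, but in the form streamlined by Guedj--Lu: reduce the a priori bound to an estimate for a suitable envelope, which in turn is controlled by a volume--capacity comparison. First I would reduce to the case $\psi = 0$ (i.e.\ no comparison function). Indeed, since $\psi \in \PSH(X,\delta\theta)$ with $\delta \in (0,1)$, the function $u := \f - (1-\delta)\psi$ is not quite $\theta$-psh; instead one uses convexity of the Monge--Amp\`ere operator: writing $\theta + \dc \f = (\theta_\f)$ and noting $\f \geq \psi - C$ is equivalent to $\f - \psi \geq -C$, one compares $\f$ with a perturbation built from $\psi$. More efficiently, set $w := P_\theta(\f)$—but $\f$ is already $\theta$-psh, so instead consider the envelope $\Phi := P_\theta(\psi + M)$ for a constant $M$ to be chosen; by Theorem~\ref{thm: envelope} this is a bounded $\theta$-psh function whose Monge--Amp\`ere measure is concentrated on $\{\Phi = \psi + M\}$, and on that contact set we get a lower bound on $(\theta + \dc\Phi)^n$ from the density $e^{A\psi - g}f$ (using that $\f$ there would have to satisfy the equation). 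The point is to show $\f \geq \Phi - C' $ by a maximum-principle comparison, and then $\Phi \geq \psi - C''$ by an envelope estimate.

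The technical heart is the quantitative envelope bound: one shows that a bounded $\theta$-psh function $\f$ with $(\theta + \dc\f)^n \leq F\, dV_X$ and $F \in L^q$, $q>1$, satisfies $\sup_X(\psi - \f) \leq C$ with $C$ depending only on $n, \delta, \theta, q$ and $\|F\|_{L^q}$ (here $F = e^{A\psi - g}f$ after absorbing the $e^{A\f}$ factor—note $e^{A\f}$ is bounded above in terms of $\sup_X \f$, which one normalizes). Because $\theta$ is only big (not K\"ahler), the relevant capacity is the one adapted to big forms: on the ample locus $\Omega$ one has a non-collapsing volume bound $\int_X (\theta + \dc\rho_\e)^n \geq v_+(\theta)$-type control, and Guedj--Lu's version of Ko\l odziej's inequality gives $\Capa(\{\f < \psi - s\}) \leq C \exp(-\lambda s)$ for the Monge--Amp\`ere capacity relative to $\theta$, using the $L^q$ bound on the density together with a H\"older-type estimate $\mu(E) \leq C\,\Capa(E)^2$ valid for measures with $L^q$ densities with respect to a fixed volume form. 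Iterating (De Giorgi-style) over the sublevel sets $\{\f < \psi - s\}$ then yields that this set is empty for $s \geq C$, which is exactly the claimed estimate $\f \geq \psi - C$.

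Concretely the steps are: \textbf{(1)} normalize so that $\sup_X \f = 0$, absorbing $e^{A\f} \le 1$ into the density and thus replacing the right-hand side by $e^{A\psi - g} f\, dV_X =: \mu$, which has $L^q$ density with $q>1$; \textbf{(2)} invoke the volume--capacity estimate $\mu(E) \le C_{\mathrm{unif}}\,\Capa_\theta(E)^2$ for Borel sets $E$, where $C_{\mathrm{unif}}$ depends on $\int_X e^{q(A\psi - g)}f^q\, dV_X$, $q$, $n$ and $\omega_X$ (this is where $\theta$ big—hence $v_+(\theta) < \infty$ and a fixed background volume form—enters); \textbf{(3)} apply the comparison of capacities principle for the sets $U(s,t) := \{\f < \psi - s - t\}$, obtaining $t^n\,\Capa_\theta(U(s,t)) \le \mu(U(s,0))$ by the standard argument testing $\f$ against $\psi - s + t\cdot(\text{competitor})$; \textbf{(4)} combine with step (2) and run Ko\l odziej's iteration lemma to conclude $U(C,0) = \emptyset$. \textbf{The main obstacle} I expect is step (2)–(3) in the \emph{big but non-K\"ahler} setting: one must make sure the Monge--Amp\`ere capacity relative to the big form $\theta$ still dominates the Bedford--Taylor capacity of a fixed Hermitian metric up to a uniform constant (so that the $L^q$-density estimate transfers), and that the comparison principle used in step (3) does not leak extra error terms from $\dc\omega_X \neq 0$; this is precisely the content that Guedj--Lu's method \cite{guedj2021quasi1,guedj2021quasi} is designed to handle, and I would cite their uniform estimate for degenerate complex Monge--Amp\`ere equations on Hermitian manifolds rather than redo the iteration. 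The dependence of $C$ on only the listed quantities then falls out of tracking constants through this scheme.
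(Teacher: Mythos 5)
Your proposal takes a genuinely different route from the paper --- a Ko\l odziej-type volume--capacity comparison and De Giorgi iteration over the sublevel sets $\{\f<\psi-s\}$, i.e.\ essentially the Di Nezza--Lu ``generalized Monge--Amp\`ere capacity'' scheme --- whereas the paper deliberately avoids that scheme. The paper's proof is short: set $u:=P_{(1-\delta)\theta}(\f-\psi)$ (after truncating $\psi$ to make it bounded). By Theorem~\ref{thm: envelope} the measure $((1-\delta)\theta+\dc u)^n$ is concentrated on the contact set $D=\{u+\psi=\f\}$; since $u+\psi$ and $\f$ are both $\theta$-psh with $u+\psi\le\f$, Lemma~\ref{lem: max-princ} gives $\mathbf{1}_D((1-\delta)\theta+\dc u)^n\le\mathbf{1}_D(\theta+\dc\f)^n\le \mathbf{1}_De^{Au}\,e^{A\psi-g}f\,dV_X$, and the uniform estimate of Guedj--Lu for $((1-\delta)\theta+\dc u)^n\le e^{Au}F\,dV_X$ with $F\in L^q$ yields $u\ge -C$, hence $\f\ge\psi-C$. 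Your opening paragraph gropes toward an envelope, but of the wrong object ($P_\theta(\psi+M)$ rather than $P_{(1-\delta)\theta}(\f-\psi)$), and you then abandon it.

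The gap in your capacity route is concrete and is precisely what the envelope is designed to circumvent. First, your step (3), $t^n\Capa_\theta(U(s,t))\le\mu(U(s,0))$, rests on a comparison principle for the Monge--Amp\`ere operator of a \emph{non-closed}, merely big form $\theta$; in the Hermitian setting this inequality picks up error terms from $d\theta$ and is not available off the shelf, and since the sublevel sets are cut out by the possibly unbounded function $\psi$, one would in addition need a generalized capacity adapted to the weight $\psi$ (the Di Nezza--Lu construction), which has only been developed in the K\"ahler case. Second, your declared fallback --- ``cite Guedj--Lu's uniform estimate rather than redo the iteration'' --- does not close the argument: that estimate, applied directly to $\f$, produces a lower bound by a \emph{constant}, whereas the lemma asserts a lower bound by the unbounded function $\psi$; the reduction of the relative bound to the constant bound is exactly the envelope step you are missing. (Your step (1) is also off as stated: one cannot normalize $\sup_X\f=0$ and absorb $e^{A\f}\le 1$ into the density, since the hypothesis is not translation-invariant and the conclusion would then only control $\f-\sup_X\f$; the correct, harmless version of this move is to note that on $\{\f<\psi\}$ one has $e^{A\f}\le e^{A\psi}$, or, as in the paper, to keep the factor $e^{Au}$ and use a theorem that handles it.)
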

	\begin{proof}We apply the approach recently developed by Guedj--Lu~\cite{guedj2021quasi1,guedj2021quasi}. 
		Subtracting a large constant, we can assume that $\f\leq 0$.
		Set $u:=P_{(1-\delta)\theta}(\f-\psi)$. Fix $M>0$ so large that $E:=\{\psi >-M\}$ is not empty, and hence it is non-pluripolar. 
		We claim that the global extremal function $V_{E,(1-\delta)\theta}^*$ of $E$ is not identically $+\infty$, where
$$V_{E,(1-\delta)\theta}(x):=\sup\{\varphi(x): \varphi\in\PSH(X,(1-\delta)\theta), \varphi\leq 0\,\text{on}\, E \}.$$ 
The proof follows almost verbatim from \cite[Theorem 9.17]{guedj2017degenerate}. We suppose by contradiction that $\sup_X V_{E,(1-\delta)\theta}=+\infty$. By  a lemma of Choquet (see \cite[Lemma 4.31]{guedj2017degenerate}), there exist an increasing sequence $u_j\in \PSH(X,(1-\delta)\theta)$ such that  $u_j=0$ on $E$, $\sup_X u_j\geq 2^j$, and $$V_{E,(1-\delta)\theta}=(\lim\nearrow u_j)^*.$$ Set $v_j:=u_j-\sup_X u_j$. These functions belong to the compact set of $(1-\delta)\theta$-psh functions normalized by $\sup_X w=0$. Hence, there exists a uniform constant $C>0$ such that $\int_X v_jdV\geq -C$; cf. \cite[Proposition 2.1]{dinew2012pluripotential}. Since $(1-\delta)\theta\geq 0$, the function $v:=\sum_{j\geq 1}2^{-j}v_j\in \PSH(X,(1-\delta)\theta)$ is a decreasing limit of functions in $\PSH(X,(1-\delta)\theta)$, with $\int_X vdV\geq -C$. Since $v=-\infty$ on $E$, it follows that $E$ is $\PSH(X,(1-\delta)\theta)$-pluripolar. This gives a contradiction.

		Since $u\leq \f-\psi\leq M$ on $E$, hence $u-M$ is a candidate defining $V_{E,(1-\delta)\theta}$.
	Therefore, $\sup_X u\leq M+\sup_X V_{E,(1-\delta)\theta}^*$ is uniformly bounded from above. 
		
		Since $\f-\psi$ is bounded from below and quasi-continuous, it follows from Theorem~\ref{thm: envelope} that $((1-\delta)\theta+\dc u)^n$ is supported on the contact set $D:=\{u+\p=\f \}$.
		We observe that $u+\p$ and $\f$ are both $\theta$-psh functions satisfying $u+\p\leq \f$, it follows from Lemma~\ref{lem: max-princ} that
		\begin{equation*}
		\mathbf{1}_D(\theta+\dc (u+\p))^n\leq \mathbf{1}_D(\theta+\dc\f)^n.
		\end{equation*}
		From these, we have
		\begin{equation*}
		\begin{split}
		((1-\delta)\theta+\dc u)^n&=\mathbf{1}_D((1-\delta)\theta+\dc u)^n\\
		&\leq \mathbf{1}_D(\theta+\dc (u+\p))^n\\
		&\leq\mathbf{1}_D(\theta+\dc\f)^n\\
		&\leq  \mathbf{1}_D e^{A\f-g}fdV_X\\
		&= \mathbf{1}_D e^{A u}e^{A\p-g}fdV_X.
		\end{split}
		\end{equation*} By assumption, $F:=e^{A\p-g}f\in L^q(X)$, with $q>1$. 
		Since $(1-\delta)\theta$ is semi-positive and big, it follows from~\cite[Lemma 2.1]{guedj2021quasi} that there exists a uniform constant $m>0$ only depending on $dV_X$, $n$, $q$, $\theta$, $\delta$, and $\|e^{A\p-g}f\|_{L^q}$, such that we can find $v\in\PSH(X,(1-\delta)\theta)\cap L^\infty(X)$ satisfying $-1\leq v\leq 0$ and $$((1-\delta)\theta+\dc v)^n\geq mF dV_X.$$ Hence
		$$e^{-A(v+A^{-1}\ln m)}((1-\delta)\theta+\dc v)^n\geq FdV_X\geq e^{-Au}((1-\delta)\theta+\dc u)^n.$$
		The domination principle (cf. \cite[Proposition 1.14]{guedj2021quasi}) yields $u\geq v+A^{-1}\ln m$.
		This completes the proof.
	\end{proof}
	\subsection{Equisingular approximation}
	Fix $\f$ a $\theta$-psh function on $X$. We aim at approximating $\f$ by a decreasing sequence of quasi-psh functions which are less singular than $\f$ and such that their singularities are somehow comparable to those of $\f$. This leads us to apply Demailly's equisingular approximation theorem. 
	For each $c>0$, we  define the \emph{Lelong super-level sets}  \[E_c(\f):= \{x\in X:\nu(\f,x)\geq c \}.\]  
	We also use the notation $E_c(T)$ for a closed positive $(1,1)$-current $T$.
	A well-known result of Siu~\cite{siu1974analyticity}  asserts that the Lelong super-level sets
	$E_c(\f)$ are    analytic subsets of $X$.  We refer the reader to~\cite[Remark 3.2]{demailly1992regularization} for an alternative proof.
	
	The following result of Demailly on the equisingular approximation of a quasi-psh function by quasi-psh functions with analytic singularities is crucial.
	
	\begin{theorem}[Demailly's equisingular approximation]\label{thm: dem} 
		Let $\f$ be a $\theta$-psh function on $X$. There exists a decreasing sequence of quasi-psh functions $(\f_m)_{m\in\mathbb{N}}$ such that
		\begin{enumerate}
			\item $(\f_m)$ converges pointwise and in $L^1(X)$ to $\f$ as $m\to+\infty$, 
			\item $\f_m$ has the same
			singularities as  $1/2m$ times a logarithm of a sum of squares of holomorphic functions,
			\item $\dc\f_m\geq -\theta -\varepsilon_m\omega_X$, where $\varepsilon_m>0$ decreases to 0 as $m\to+\infty$,
			\item $\int_Xe^{2m(\f_m-\f)}dV<+\infty$,
			\item $\f_m$ is smooth outside the analytic subset $E_{1/m}(\f)$.
		\end{enumerate}
	\end{theorem}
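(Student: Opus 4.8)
The plan is to follow Demailly's Bergman-kernel regularization, controlled by the Ohsawa--Takegoshi $L^2$-extension theorem, and to read off the equisingular and analytic-singularities statements from the coherence of multiplier ideal sheaves. First I would cover $X$ by finitely many coordinate balls $B_j$ so small that $\theta=\dc\psi_j$ admits a smooth local potential $\psi_j$ on a neighbourhood of $\overline{B_j}$, so that $\f+\psi_j$ is psh there. For each $m\in\mathbb{N}$ I form the Hilbert space $\mathcal{H}_{m,j}$ of holomorphic functions $g$ on $B_j$ with $\int_{B_j}|g|^2 e^{-2m(\f+\psi_j)}\,dV<+\infty$, choose an orthonormal basis $(g_{j,k})_k$, and set
\[
\f_{m,j}:=\tfrac{1}{2m}\log\Bigl(\sum_k|g_{j,k}|^2\Bigr)-\psi_j\qquad\text{on }B_j ,
\]
which is $\theta$-psh on $B_j$. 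The approximant $\f_m$ is then obtained by gluing the $\f_{m,j}$ with a single regularized maximum; the price paid by the gluing is precisely the term $\e_m\omega_X$ in~(3).

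The quantitative core is the two-sided estimate, with constants \emph{independent of $m$},
\[
\f(z)-\frac{C_1}{m}\ \le\ \f_{m,j}(z)\ \le\ \sup_{|\zeta-z|<r}\f(\zeta)+\frac{1}{2m}\log\frac{C_2}{r^{2n}} .
\]
The upper bound is the sub-mean-value inequality applied to $|g|^2e^{-2m(\f+\psi_j)}$, together with the extremal description $\sum_k|g_{j,k}(z)|^2=\sup\{|g(z)|^2:\|g\|_{\mathcal{H}_{m,j}}\le 1\}$. The lower bound is the point at which the analysis bites: applying Ohsawa--Takegoshi extension at a point $z_0$ with $\f(z_0)>-\infty$ produces $g\in\mathcal{H}_{m,j}$ with $g(z_0)=1$ and $\|g\|_{\mathcal{H}_{m,j}}^2\le C\,e^{-2m(\f+\psi_j)(z_0)}$, the constant $C$ depending only on $n$ and the fixed chart but \emph{not} on $m$; hence $\sum_k|g_{j,k}(z_0)|^2\ge C^{-1}e^{2m(\f+\psi_j)(z_0)}$. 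Choosing $r=r_m\downarrow 0$ slowly and using upper semicontinuity of $\f$ then forces $\f_{m,j}\to\f$ pointwise and in $L^1_{\loc}(B_j)$, while the same estimates give $|\f_{m,j}-\f_{m,k}|\to 0$ on overlaps; the gluing lemma for quasi-psh functions yields a global $\f_m$ with $\dc\f_m\ge-\theta-\e_m\omega_X$, $\e_m\downarrow 0$, and $\f_m\to\f$ pointwise and in $L^1(X)$. A further standard argument --- passing to a subsequence and taking regularized maxima --- arranges the sequence to be decreasing, which together with the above gives~(1) and~(3).

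For the remaining items I would use coherence. On $B_j$ the ideal generated by $\{g_{j,k}\}_k$ is the multiplier ideal $\mathcal{I}(m(\f+\psi_j))=\mathcal{I}(m\f)$ (the smooth term $m\psi_j$ being irrelevant); being coherent, it is generated on any $B_j'\Subset B_j$ by finitely many $g_{j,1},\dots,g_{j,N}$, and the sub-mean-value inequality forces the Bergman kernel $\sum_k|g_{j,k}|^2$ to be comparable on $B_j'$ to $\sum_{k\le N}|g_{j,k}|^2$. Therefore $\f_{m,j}=\tfrac{1}{2m}\log\bigl(\sum_{k\le N}|g_{j,k}|^2\bigr)+O(1)$ on $B_j'$, so $\f_{m,j}$ --- and hence $\f_m$, which locally coincides with one of the $\f_{m,j}$ up to a bounded term --- has analytic singularities of exactly the type asserted in~(2). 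The same finite sum gives
\[
\int_{B_j'}e^{2m(\f_{m,j}-\f)}\,dV=\int_{B_j'}\Bigl(\sum_{k\le N}|g_{j,k}|^2\Bigr)e^{-2m(\f+\psi_j)}\,dV\ \le\ \sum_{k\le N}\|g_{j,k}\|_{\mathcal{H}_{m,j}}^2<+\infty ,
\]
and summing over the finite subcover gives~(4). Finally the singular locus of $\f_{m,j}$ is the common zero variety of $\mathcal{I}(m\f)$; if $\nu(\f,x)<1/m$ then $\nu(m\f,x)<1$, so by Skoda's integrability theorem $e^{-2m\f}$ is locally integrable near $x$, hence $\mathcal{I}(m\f)_x=\mathcal{O}_x$ and $\f_m$ is smooth (indeed real-analytic) near $x$; thus $\f_m$ is smooth on $X\setminus E_{1/m}(\f)$, which is~(5).

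The hard part, I expect, will be making the Ohsawa--Takegoshi lower bound genuinely uniform in $m$ and, simultaneously, carrying the analytic-singularities structure, the global equisingular integrability, and the monotonicity all through the one gluing step: one must glue by a single regularized maximum (rather than a partition of unity), so as not to spoil the local comparison $\f_{m,j}\sim\tfrac{1}{2m}\log\sum_{k\le N}|g_{j,k}|^2$, and the monotonization must be performed along a subsequence chosen fast enough that the errors $\e_m$ and $C_1/m$ do not accumulate while~(2) is preserved.
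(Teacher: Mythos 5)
Your overall architecture is the standard Demailly Bergman-kernel scheme and coincides with the paper's proof (local Hilbert spaces with weight $e^{-2m\f}$, Ohsawa--Takegoshi for the lower bound, sub-mean-value for the upper bound, coherence/strong Noetherian property for (2) and (4), Skoda for (5)). But your very first step does not exist in this paper's setting: you write $\theta=\dc\psi_j$ for a smooth local potential $\psi_j$. Here $X$ is a compact \emph{Hermitian} manifold and $\theta$ is only a smooth semi-positive $(1,1)$-form; it is not assumed closed (in the applications $\theta=\pi^*\omega_Y$ for a Hermitian $\omega_Y$), so no local $\dc$-potential is available and $\f+\psi_j$ cannot be made psh this way. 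The paper's proof replaces the exact potential by an \emph{approximate} one: on each geodesic ball it diagonalizes $\gamma=-\theta$ at the center, sandwiches $\gamma$ between $\sum_l\lambda^j_l\,i\,dz^j_l\wedge d\bar z^j_l$ and the same plus $\delta\omega_X$, and works with $\f^j=\f-\sum_l\lambda^j_l|z^j_l|^2$, which \emph{is} psh because $\dc\f\geq\gamma$. The loss $\delta$ is then sent to $0$ by taking $\delta=\delta_m\searrow 0$ slowly, and this --- not only the gluing --- is the actual source of the error $\varepsilon_m\omega_X$ in (3). Your argument goes through once this substitution is made, but as written it silently assumes the K\"ahler (or at least $\dc$-closed) case.

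Two smaller points. First, your monotonization ``by passing to a subsequence and taking regularized maxima'' is not the right mechanism and would not obviously preserve (2) and (4): a regularized max of later terms need not dominate earlier terms, and it destroys the identification of $\f_m$ with $\tfrac1{2m}\log$ of a finite sum of $|f_k|^2$. The decreasing property in Demailly's theorem comes from subadditivity of the Bergman kernels (equivalently $\mathcal{I}((m+m')\f)\subset\mathcal{I}(m\f)\cdot\mathcal{I}(m'\f)$), which gives $\f_{m+m'}\leq\frac{m\f_m+m'\f_{m'}}{m+m'}+\frac{C}{m+m'}$ and hence monotonicity along $m=2^k$ after adjusting by $O(2^{-k})$ constants; the paper cites this rather than reproving it, but you should name the correct tool. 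Second, your gluing by a single regularized maximum versus the paper's partition-of-unity formula $\frac{1}{2m}\log\bigl(\sum_j\theta_j^2\exp(2m(\cdots))\bigr)$ is a legitimate variant; both rely on the uniform two-sided comparison on overlaps, and both contribute an $O(1/m)$-type error, so this difference is cosmetic. Your treatment of (4) via coherence of $\mathcal{I}(m\f)$ on a single ball is essentially the paper's strong-Noetherian argument on $B\times B$ in different clothing and is fine.
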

	\begin{proof}
		We briefly outline the idea for the reader's convenience, as it is likely already known to experts. We follow the proof of ~\cite{demailly1992regularization} by applying with the current $T=\dc\f$ and the smooth real (1,1) form $\gamma=-\theta$. We also borrow notation from there.
		
		For
		$\delta>0$ small, let us cover $X$ by $N=N(\delta)$ geodesic balls $B_{2r}(a_j)$ with respect to $\omega_X$ such that $X=\cup_jB_r(a_j)$ and in terms of coordinates $z^j=(z_1^j,\ldots, z_n^j)$,
		\[\sum_{\ell=1}^n\lambda_{\ell}^j i dz_\ell^j\wedge d\Bar{z}_\ell^j \leq \gamma|_{B_{2r}(a_j)}\leq \sum_{\ell=1}^n(\lambda_{\ell}^j+\delta)i dz_\ell^j\wedge d\Bar{z}_\ell^j\]
		where we have  diagonalized $\gamma(a_j)$ at
		the center $a_j$. Here, $N$ and $r$ are taken to be uniform. Set $\f^j:=\f|_{B_{2r}(a_j)}-\sum_{\ell=1}^n\lambda_\ell^j|z^j_\ell|^2$.
		On each $B_{2r}(a_j)$, we define
		\[\f_{j,\delta, m}:=\frac{1}{2m}\log \sum_{k\in \mathbb{N}}|f_{j,m,k}|^2, \] where $(f_{j,m,k})_{k\in\mathbb{N}}$ is an orthogonal basis of the Hilbert space $\mathcal{H}_{B_{2r}(a_j)}\left(m\f^j\right)$ of holomorphic functions on $B_{2r}(a_j)$ with finite $L^2$ norm $\|u\|^2=\int_{B_{2r}(a_j)}|u|^2e^{-2m\f^j}dV(z^j)$. Note that since $\dc\f\geq \gamma$ it follows that $\f-\sum_{\ell=1}^n\lambda_\ell^j|z^j_\ell|^2$ is psh on $B_{2r}(a_j)$. The Bergman kernel process applied on each ball $B_{2r}(a_j)$ has provided approximations $\f_{j,\delta,m}$ of $\f^j=\f|_{B_{2r}(a_j)}-\sum_{\ell=1}^n\lambda_\ell^j|z^j_\ell|^2$, it thus remains to glue these functions  into a function $\f_{\delta,m}$ globally defined on $X$. For this, we set
		\[ \f_{\delta,m}(x)= \frac{1}{2m}\log \left( \sum_j \theta_j(x)^2\exp\left(2m\left(\f_{j,\delta,m}+\sum_\ell(\lambda^j_\ell-\delta)|z_\ell^j|^2\right) \right) \right)\]
		where $(\theta_j)_{1\leq j\leq N}$ is the partition of unity subordinate to the $B_{r}(a_j)$'s. Now we take $\delta=\delta_m\searrow 0$ slowly and $\f_m=\f_{\delta_m,m}$ the same computations as in~\cite[page 16]{demailly1992regularization} ensure that $$\dc\f_m\geq \gamma-\varepsilon(\delta_m)\omega_X$$ for $m\geq m_0$ sufficiently large and $\varepsilon_m=\varepsilon(\delta_m)\searrow 0$ as $m\to+\infty$. By construction, the properties (1), (2), (3), and (5) are satisfied.

		The property (4) is crucial for later use, its proof should be provided. 
		The argument originates from~\cite[Theorem 2.3, Step 2]{demailly2001psef}, using local uniform convergence and the strong
		Noetherian property. By the properties of the functions $\f_m$, it suffices to show that on each ball $B_j=B_r(a_j)$,
		\[\int_{B_j}e^{2m\f_m-2m\f}dV=\int_{B_j}\left(\sum_{k\in \mathbb{N}}|f_{j,m,k}|^2\right)e^{-2m\f}dV(z^j)<+\infty.\]
		We let $\mathcal{F}_1\subset \mathcal{F}_2\subset\ldots\mathcal{F}_k\subset \ldots \subset \mathcal{O}(B_{2r}(a_j)\times B_{2r}(a_j))$ denote the sequence of ideal coherent sheaves generated by the holomorphic functions $\left(f_{j,m,\ell} (z)\overline{f_{j,m,\ell}(\bar{w})}\right)_{\ell\leq k}$ on $B_{2r}(a_j)\times B_{2r}(a_j)$. By the strong Noetherian property
		(see e.g.,~\cite[C. II, 3.22]{demaillycomplex}) 
		the sequence $(\mathcal{F}_k)$ 
		is stationary on a compact subset $B_j\times B_j\subset\subset B_{2r}(a_j)\times B_{2r}(a_j)$ at an index $k_0$ large enough. Using the Cauchy--Schwarz inequality we have that the sum of the series $U(z,w)=\sum_{k\in\mathbb{N}} f_{j,m,k} (z)\overline{f_{j,m,k}(\bar{w})}$ is bounded from above by 
		\[ \left(\sum_{k\in \mathbb{N}}|f_{j,m,k}(z)|^2 \sum_{k\in \mathbb{N}}|f_{j,m,k}(\Bar{w})|^2\right)^{\frac{1}{2}}\]
		hence uniformly convergent on every compact subset of $B_{2r}(a_j)\times B_{2r}(a_j)$. Since the space of sections of a coherent ideal sheaf is closed under the topology of uniform convergence on compact subsets, the Noetherian property guarantees $U(z,w)\in\mathcal{F}_{k_0}(B_j\times B_j)$.  Restricting to the conjugate diagonal $w = \Bar{z}$, we obtain \[ \sum_{k\in \mathbb{N}}|f_{j,m,k}(z)|^2\leq C_0\left(\sum_{k\leq k_0}|f_{j,m,k}(z)|^2\right)\]
		on $B_j$.  Since all terms $f_{j,m,k}$ have the $L^2$-norm equal to 1 with respect to the weight $e^{-2m\f}$, this completes the proof.
	\end{proof}
	
	Using this, one obtains the following lemma, which is slightly more general than the one in~{\cite{di2017uniqueness}}.
	
	\begin{lemma}\label{lem: equisingular} Let $\theta$ be a big (1,1) form. Assume  $\f\in\PSH(X,\theta)$. Then for each $\varepsilon>0$ there exist $c(\varepsilon)>0$ and $\psi_\varepsilon\in\PSH(X,\theta)\cap\mathcal{C}^\infty\left(X\setminus  (\{\rho=-\infty\}\cup E_{c(\varepsilon)}(\f)) \right)$ such that\begin{equation}\label{eq: finitemass}
		\int_X e^{\frac{2}{\varepsilon}(\psi_\varepsilon-\f)}dV_X<+\infty.
		\end{equation} 
	\end{lemma}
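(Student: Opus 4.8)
The plan is to combine Demailly's equisingular approximation (Theorem~\ref{thm: dem}) applied to $\f$ with the analytic-singularities potential $\rho$ that comes with the bigness assumption (Definition~\ref{def: big}), taking a suitable convex combination so that the resulting function has the required positivity while keeping its singularities controlled along an analytic set together with $\{\rho=-\infty\}$. More precisely, for a given $\varepsilon>0$ I would first apply Theorem~\ref{thm: dem} to $\f$ to get, for $m$ large enough, a quasi-psh function $\f_m$ with $\dc\f_m\geq -\theta-\varepsilon_m\omega_X$, smooth outside $E_{1/m}(\f)$, with $\int_X e^{2m(\f_m-\f)}\,dV_X<+\infty$. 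The point is that the ``bad'' extra term $\varepsilon_m\omega_X$ can be absorbed using $\rho$: since $\theta+\dc\rho\geq\delta\omega_X$, for small $t>0$ the function $g_{m,t}:=(1-t)\f_m+t\rho$ satisfies
\begin{equation*}
\theta+\dc g_{m,t}\geq (1-t)(\theta+\dc\f_m)+t\,\dc\rho\geq -(1-t)\varepsilon_m\omega_X+t(\delta\omega_X-\theta-\dc g_{m,t}+\dc g_{m,t}),
\end{equation*}
so a cleaner way is: $\theta+\dc g_{m,t}\geq (1-t)(-\varepsilon_m\omega_X)+t(\theta+\dc\rho)-t\theta+\ldots$; the honest computation is $\theta+\dc g_{m,t}=(1-t)(\theta+\dc\f_m)+t(\theta+\dc\rho)\geq -(1-t)\varepsilon_m\omega_X+t\delta\omega_X\geq 0$ as soon as $t\delta\geq (1-t)\varepsilon_m$, e.g. $t=t_m:=\varepsilon_m/(\delta+\varepsilon_m)\to 0$. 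Thus $g_{m,t_m}\in\PSH(X,\theta)$.

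Next I would verify the two remaining requirements. For smoothness: $\f_m$ is smooth off $E_{1/m}(\f)$ and $\rho$ is smooth off $\{\rho=-\infty\}$, so $g_{m,t_m}$ is smooth on $X\setminus(\{\rho=-\infty\}\cup E_{1/m}(\f))$; setting $c(\varepsilon):=1/m$ gives the claimed analytic set $E_{c(\varepsilon)}(\f)$. For the integrability~\eqref{eq: finitemass}: I want $\psi_\varepsilon:=g_{m,t_m}$ to satisfy $\int_X e^{\frac{2}{\varepsilon}(\psi_\varepsilon-\f)}\,dV_X<+\infty$. Write $\psi_\varepsilon-\f=(1-t_m)(\f_m-\f)+t_m(\rho-\f)$. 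Since $\rho$ has analytic singularities and $\f\in\PSH(X,\theta)$, the difference $\rho-\f$ is bounded above on $X$ (both are quasi-psh, and near $\{\rho=-\infty\}$ one only needs an upper bound, which holds since $\rho\leq C$), so $t_m(\rho-\f)\leq C$. Hence
\begin{equation*}
\int_X e^{\frac{2}{\varepsilon}(\psi_\varepsilon-\f)}\,dV_X\leq e^{2C/\varepsilon}\int_X e^{\frac{2(1-t_m)}{\varepsilon}(\f_m-\f)}\,dV_X.
\end{equation*}
Choosing $m$ large enough that $2(1-t_m)/\varepsilon\leq 2m$ — equivalently $m\varepsilon\geq 1-t_m$, which is automatic for $m$ large since $\varepsilon$ is fixed — and using that $\f_m-\f\geq 0$ fails in general but $\f_m\geq\f$ does hold (as $\f_m$ decreases to $\f$), one gets $e^{\frac{2(1-t_m)}{\varepsilon}(\f_m-\f)}\leq e^{2m(\f_m-\f)}$ pointwise, so the integral is finite by property (4) of Theorem~\ref{thm: dem}.

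The main obstacle I anticipate is the bookkeeping of which parameter governs which property: the same index $m$ must simultaneously make $\varepsilon_m$ small (to get positivity after mixing with $\rho$, forcing $t_m\to 0$), make $t_m(\rho-\f)$ harmless (automatic once $\rho-\f$ is bounded above, which is the one genuinely structural input and uses that $\rho$ has \emph{analytic} — hence only logarithmic — singularities while $\f$ is merely quasi-psh, so one should double-check the upper bound $\rho\leq\f+C$ is false and only $\rho\leq C\leq\f+C'$ is needed, i.e. boundedness above of $\rho$ alone suffices), and make $2(1-t_m)/\varepsilon\leq 2m$ so that property (4) applies with the right constant. All three are monotone in $m$ for fixed $\varepsilon$, so a single large $m=m(\varepsilon)$ works; the statement then follows with $\psi_\varepsilon=(1-t_m)\f_m+t_m\rho$ and $c(\varepsilon)=1/m$. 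A minor point to handle carefully is that $\PSH(X,\theta)$-membership is claimed for $\theta$ itself (not $\theta+\varepsilon\omega_X$), which is exactly why the $\rho$-correction, rather than a plain $\varepsilon_m$-loss, is essential — this is the improvement over the Kähler statement in~\cite{di2017uniqueness}, where bigness is free.
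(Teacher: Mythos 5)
Your construction is the same as the paper's: $\psi_\varepsilon$ is a convex combination $(1-t_m)\f_m+t_m\rho$ of the Demailly approximant and the bigness potential, with $t_m\sim\varepsilon_m$ chosen so that the loss $-\varepsilon_m\omega_X$ is absorbed by $\theta+\dc\rho\geq\delta\omega_X$. The positivity and smoothness parts are fine. But there is a genuine gap in the integrability step: you claim $t_m(\rho-\f)\leq C$ because ``$\rho\leq C\leq \f+C'$'', i.e.\ you are implicitly assuming $\f$ is bounded below. It is not: $\f$ is an arbitrary element of $\PSH(X,\theta)$, possibly with positive Lelong numbers (this is exactly the regime the lemma is designed for), so $\rho-\f$ blows up to $+\infty$ at any pole of $\f$ where $\rho$ is finite, and the term $e^{\frac{2t_m}{\varepsilon}(\rho-\f)}$ cannot be discarded as a bounded factor.

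The correct treatment of this term, which is what the paper does, is: use only $\rho\leq 0$ to get $\frac{2}{\varepsilon}(\psi_\varepsilon-\f)\leq \frac{2(1-t_m)}{\varepsilon}(\f_m-\f)-\frac{2t_m}{\varepsilon}\f$, then apply H\"older's inequality to split the integral into a power of $\int_X e^{2m(\f_m-\f)}dV_X$ (finite by Theorem~\ref{thm: dem}(4)) and a power of $\int_X e^{-2c\f}dV_X$ with $c$ a small multiple of $t_m/\varepsilon$. The latter is finite precisely when $c<c(\f)$, which is why the paper (after reducing to the case $c(\f)<+\infty$) imposes the \emph{second} condition on $m(\varepsilon)$, namely $\frac{2\varepsilon_m}{\varepsilon(1+\varepsilon_m)}<c(\f)$, alongside the condition $m>\frac{2}{\varepsilon(1+\varepsilon_m)}$ (note the factor $2$ rather than your $1$, which is needed to leave room for the H\"older conjugate exponent on the first factor). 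Your single constraint ``$m\varepsilon\geq 1-t_m$'' misses both of these. Since $\varepsilon_m\to 0$, both conditions are achievable for $m$ large, so the architecture of your proof survives once you replace the false boundedness claim by this H\"older/Skoda argument.
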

	
	\begin{proof} The proof is quite close to that of~\cite[Lemma 2.7]{di2017uniqueness}.
		Recall that the bigness of $\theta$ implies that there exists $\rho$ an $\theta$-psh function with analytic singularities and $\sup_X\rho=0$ such that \[\theta+\dc\rho\geq 3\delta_0\omega_X\quad \text {for a fixed constant}\; \delta_0>0.\] 
		Let $c(\f)$ be the integrability index of $\f$. We can assume that $c(\f)<+\infty$; otherwise we are done. By Theorem~\ref{thm: dem}, we can find $(\f_m)$ a Demailly's equisingular approximant of $\f$. We have that $\f_m$ is smooth in the complement of the analytic subset $E_{1/m}(\f)$ and \[\theta+\dc\f_m\geq-\varepsilon_m\delta_0\omega_X\]
		for $\varepsilon_m>0$ decreasing to zero as $m$ goes to $+\infty$. We notice that the errors $\varepsilon_m>0$ appear in the gluing process; see Theorem~\ref{thm: dem}. We choose $m=m(\varepsilon)$  to be the smallest positive integer such that
		\[m>\frac{2}{\varepsilon(1+\varepsilon_m)}, \quad\frac{2\varepsilon_m}{\varepsilon(1+\varepsilon_m)}<c(\f).\]
		We now set \begin{equation}\label{func}
		\psi_\varepsilon:=\frac{\f_m}{1+\varepsilon_m}+\frac{\varepsilon_m}{1+\varepsilon_m}\rho.
		\end{equation}
		Thus, we have $$\theta+\dc\psi_\varepsilon\geq \frac{\varepsilon_m}{1+\varepsilon_m}2\delta_0\omega_X:=2\kappa\omega_X.$$
		Holder's inequality ensures that~\eqref{eq: finitemass} holds, noticing that $\rho\leq 0$. We easily see that $\psi_\varepsilon$ is smooth in the complement of $\{\rho=-\infty\}\cup E_{c(\varepsilon)}(\f)$ with $c(\varepsilon)=m(\varepsilon)^{-1}$.
	\end{proof}

	\section{A priori estimates}\label{sect: estimate}

	\subsection{Notation}\label{sect: notation}
	We use some notation from~\cite[Section 3.1]{di2017uniqueness}. 
	Until further notice, $X$ denotes a compact complex manifold of dimension $n$, endowed with a reference Hermitian form $\omega_X$.
	Following the strategy outlined in the introductory section, we assume, in this part, that $\theta$ and $\theta_t=\theta+t\chi $, ${t\in(0,\tmax)}$, are Hermitian metrics, and $\f_0$ is a smooth strictly $\theta$-psh function. 
    We denote by $\mu:=fdV_X$  a positive measure with density $\|f\|_{L^p}\leq C$ uniformly, for some $p>1$. For higher-order estimates, we assume moreover that
	$$f=e^{\psi^+-\psi^-}$$
	where $\psi^\pm$ are smooth quasi-psh functions. Recall that $\rho$ is a $\theta$-psh function with analytic singularities such that $\theta+\dc\rho$ dominates a Hermitian form. We may assume that $\sup_X\rho=0$. We remark that as follows from~\cite{guedj2021quasi},  a priori bounds below remain valid when $\theta$ is semi-positive and big, $f\in L^p(X,\omega_X)$, and $\varphi_0$ is merely  bounded and $\theta$-psh.
	
	We consider $\f_t$  a smooth solution of the following parabolic complex Monge--Amp\`ere equation \begin{equation}\label{pcmae2}
	\frac{\partial\f_t}{\partial t}=\log\left[ \frac{(\theta_t+\dc\f_t)^n}{\mu}\right],\; \f|_{t=0}=\f_0
	\end{equation} on $[0,\tmax)$; see e.g.,~\cite{tosatti2015evolution}. We should keep in mind that $\f_t$ plays the role of its approximants $\f_{t,j}$ in establishing a priori estimates. For brevity, we will suppress the index $j$.
	

	We fix $T$ and $S$
	such that 
	\begin{equation*}
	\frac{p^*}{2c(\f_0)}<T<S<T_{\max }.
	\end{equation*} where $p^*$ is the conjugate exponent of $p$, i.e. $\frac{1}{p}+\frac{1}{p^*}=1$.
	Since we are interested in the behavior of the flow~\eqref{pcmae2} near zero, we can assume that $$\theta_S\geq (1-a)\theta,
	\quad \text{for}\; a\in [0,1/2).$$ It is truly natural in several geometric contexts; for example, $\theta_t$ are the pullback of Hermitian forms.
	Thus, for each $t\in[0,S]$, we have
	\begin{equation*}
	\theta_t=\frac{t\theta_S}{S}+\frac{S-t}{S}\theta\geq\left(1-\frac{at}{S}\right)\theta.
	\end{equation*}
	During the proof, we use the notation $\omega_t:=\theta_t+\dc\f_t$ for the smooth path of Hermitian forms and denote $\Delta_t=\tr_{\omega_t}\dc$ the corresponding time-dependent Laplacian operator on functions.
	
	We fix $\varepsilon_0>0$ small and let $\psi_0:=\psi_{\varepsilon_0}$ as established in Lemma~\ref{lem: equisingular}. By construction, $\psi_0$ is smooth outside an analytic subset $\{\rho=-\infty\}\cup E_{c(\varepsilon)}(\f_0)$ and satisfies
	\begin{equation}\label{eq: kappa}\theta+\dc\psi_0\geq 2\kappa\omega_X.\end{equation}
	We let $E_1$, $E_2$ denote the following quantities
	\begin{equation*}
	E_1:=\int_X e^{\frac{2(\psi_0-\f_0)}{\varepsilon_0}}dV_X<+\infty,
	\end{equation*}
	\begin{equation*}
	E_2:=\int_X e^{-\frac{p^*\psi_0}{T}}dV_X<+\infty.
	\end{equation*}
	Observe that $E_1$ is finite by Lemma~\ref{lem: equisingular}, while $E_2$ is finite since $\frac{p^*}{2c(\f_0)}<T$ and that $\psi_0$ is less singular than $\f_0$. We should emphasize that $\f_0$ in this a priori estimate section plays a role in its approximating sequence $\f_{0,j}$ (which are smooth strictly $\theta$-psh functions decreasing to $\f_0$). The corresponding sequence $E_1^j$ is uniformly bounded from above in $j$. Hence, we can pass the limit.
	
	In what follows, we use $C$ to denote a positive constant whose value may change from line to line but is uniformly controlled. 
	\subsection{Uniform estimate}
	
	We first look for an upper a priori bound for $\f_t$. We recall that \[\frac{1}{2}\theta\leq \theta_t\leq A\omega_X,\;\forall\, t\in [0,T],\] for $A>0$ sufficiently large.
	It follows from~\cite{tosatti2010complex} that there exist a constant $c$ and a smooth $A\omega_X$-psh function $\Phi$ normalized by $\inf_X\Phi=0$ such that 
	\[\left(A\omega_X+\dc\Phi\right)^n=e^cfdV_X.\]
	\begin{proposition}\label{prop: C0upper}
		For any $(t,x)\in[0,T]\times X$, there exists a uniform constant $C>0$ such that 
		\begin{equation*}
		\f_t(x)\leq C.
		\end{equation*}
	\end{proposition}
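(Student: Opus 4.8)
The plan is to bound $\f_t$ from above by comparing the evolving potential with a suitable static barrier coming from the auxiliary Monge-Amp\`ere solution $\phi$. First I would consider the function $H(t,x):=\f_t(x)-\phi(x)-Ct$ on $[0,T]\times X$, where $\phi$ is the bounded $A\om_X$-psh function normalized by $\inf_X\phi=0$ with $(A\om_X+\dc\phi)^n=e^cf\,dV_X$, and $C>0$ is a large constant to be chosen. Since everything is smooth in the interior (we are working with the approximants, which are genuinely smooth), $H$ attains its maximum over the compact set $[0,T]\times X$ at some point $(t_0,x_0)$. The goal is to show that if $t_0>0$ then a contradiction arises for $C$ large, so that the maximum is attained at $t_0=0$, whence $\f_t\le \phi+Ct+\sup_X(\f_0-\phi)\le C'$ uniformly on $[0,T]\times X$.

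Next I would carry out the maximum principle computation at $(t_0,x_0)$ assuming $t_0>0$. There $\partial_t H\ge 0$ and $\dc H\le 0$ in the sense that $\dc\f_{t_0}\le \dc\phi$ as $(1,1)$-forms at $x_0$ (more precisely the complex Hessian of $H$ is $\le 0$ there). From the flow equation,
\begin{equation*}
\partial_t\f_{t_0}=\log\frac{(\theta_{t_0}+\dc\f_{t_0})^n}{\mu}=\log\frac{(\theta_{t_0}+\dc\f_{t_0})^n}{fdV_X}.
\end{equation*}
Since $\dc\f_{t_0}\le \dc\phi$ at $x_0$ and $\theta_{t_0}\le A\om_X$, we get $\theta_{t_0}+\dc\f_{t_0}\le A\om_X+\dc\phi$ as positive $(1,1)$-forms at $x_0$, hence $(\theta_{t_0}+\dc\f_{t_0})^n\le (A\om_X+\dc\phi)^n=e^cf\,dV_X$ at $x_0$. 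Therefore $\partial_t\f_{t_0}(x_0)\le c$, and plugging into $0\le \partial_t H(t_0,x_0)=\partial_t\f_{t_0}(x_0)-C$ gives $C\le c$, a contradiction once $C>c$. Hence the max of $H$ is on $\{t=0\}$, giving the claim with $C=\max(c+1,\,\text{something})$ and the bound $\f_t\le \sup_X(\f_0-\phi)+CT+\sup_X\phi$, which is uniform — crucially, $\sup_X\f_0$ is uniformly controlled for the approximants $\f_{0,j}$ since they decrease to $\f_0$, and $\|\phi\|_{L^\infty}$, $c$ depend only on $n$, $p$, $\om_X$, and $\|f\|_{L^p}$.

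The main obstacle I anticipate is the inequality $(\theta_{t_0}+\dc\f_{t_0})^n\le (A\om_X+\dc\phi)^n$ at the maximum point: one must be careful that at $(t_0,x_0)$ the matrix inequality for the full Hessians really holds, i.e.\ that $\dc\f_{t_0}(x_0)\le\dc\phi(x_0)$ as Hermitian matrices (not merely a trace inequality), which is exactly what a spatial interior maximum of $\f_t-\phi$ gives since $\phi$ need only be bounded and $A\om_X$-psh — here one should invoke that the comparison is between the smooth approximant $\f_{t,j}$ and the solution $\phi$ which may itself only be bounded, so strictly one applies the argument with $\phi$ replaced by a smooth approximation or uses the viscosity/Bedford-Taylor comparison; alternatively one can run the maximum principle directly against $\om_X$-level quantities. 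A second, minor point is verifying that the boundary term at $t=0$ is uniform in $j$, which follows from $\f_{0,j}\searrow\f_0$ so $\sup_X\f_{0,j}\le \sup_X\f_{0,1}$. Modulo these technical care points, the argument is a routine parabolic maximum principle.
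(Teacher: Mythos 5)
Your proposal is correct and is essentially the paper's own argument: the paper compares $\f_t$ with the static barrier $v(t,x)=\phi(x)+ct+\sup_X\f_0$ built from the same Guedj--Lu solution $\phi$ of $(A\om_X+\dc\phi)^n=e^cf\,dV_X$, invoking the classical parabolic maximum principle after noting that $\theta_t\le A\om_X$ makes $\f_t$ a subsolution of the $A\om_X$-equation; you have simply written out that maximum-principle computation explicitly. Your cautionary remark about $\phi$ being merely bounded (so that strictly one compares against a smooth approximation or uses a pluripotential comparison principle) is a fair point that the paper glosses over as well.
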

	\begin{proof}
		For any $(t,x)\in [0,T]\times X$, we set $v(t,x)=\Phi(x)+ct+\sup_X\f_0$. Then, we can check that 
		\begin{equation*}
		\frac{\partial v}{\partial t}=\log\left[ \frac{(A\omega_X+\dc v_t)^n}{\mu}\right], \;\,\text{while}\;\,    \frac{\partial \f}{\partial t}\leq\log\left[ \frac{(A\omega_X+\dc \f_t)^n}{\mu}\right],
		\end{equation*} and $v_0\geq \f_0$. Hence, by the classical maximum principle, we have $v(t,x)\geq\f(t,x)$ for $(t,x)\in [0,T]\times X$. Consequently, this provides an upper bound for $\f(t,x)$: $$\sup_X|\Phi|+\max(c,0)T+\sup_X\f_0.$$ \end{proof}
	We fix two positive constants $\alpha,\beta$ such that
	\[\frac{p^*}{2c(\f_0)}<\frac{1}{\alpha}<\frac{1}{\alpha-\beta}< T_{\max},\]
	and \[\theta+(\alpha-\beta)\chi\geq 0.\]
	We observe that the density $e^{-\alpha\f_0}f$ belongs to $L^q$ for $q>1$. Indeed, we choose $\delta>0$
	so small ($\alpha(p^*+\delta)<2c(\f_0)$) that $\frac{1}{q}=\frac{1}{p}+\frac{1}{p^*+\delta}$ with $q>1$. Applying H\"older's inequality and Skoda's theorem, we have
	\[\int_X e^{-\alpha q\f_0}f^qdV\leq \|f\|_{L^p}^q\left( \int_X e^{-\alpha (p^*+\delta)\f_0}dV\right)^{q/p^*+\delta}<+\infty. \]
	Thus, by~\cite{tosatti2010complex}, there exists a smooth $\theta$-psh function $u$ such that
	\[\beta^n(\theta+\dc u)^n=e^{\beta u-\alpha \f_0}fdV. \]
	\begin{proposition}\label{prop: lowbound}
		For $t\in (0,\alpha^{-1})$,
		\begin{equation*}
		(1-\alpha t)\f_0+\beta tu+n(t\log t -t) \leq \f_t.      
		\end{equation*}
		In particular, there exists a uniform constant $C>0$ such that \[ \f_0-C(t-t\log t)\leq \f_t,\quad \forall\, t\in (1,\alpha^{-1}).\] 
	\end{proposition}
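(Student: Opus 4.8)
The plan is to use the parabolic maximum principle by constructing an explicit subsolution. First I would verify that the function
\[
w(t,x):=(1-\alpha t)\f_0(x)+\beta t\, u(x)+n(t\log t-t)
\]
is a legitimate comparison function: for $t\in(0,\alpha^{-1})$ the coefficient $1-\alpha t$ is positive, so $w_t$ is $\big((1-\alpha t)\theta+\beta t(\theta+\dc u)\big)$-psh after absorbing the $\dc$'s; I would then check that this form is $\le\theta_t$. Indeed $(1-\alpha t)\theta+\beta t\theta=(1-(\alpha-\beta)t)\theta$, and since $t<\alpha^{-1}<(\alpha-\beta)^{-1}$ and $\theta+(\alpha-\beta)\chi\ge0$ we get $(1-(\alpha-\beta)t)\theta\le\theta+t\chi=\theta_t$ (here one uses $\theta_t=(1-(\alpha-\beta)t)\theta+(\alpha-\beta)t(\theta+(\alpha-\beta)^{-1}\chi\cdot\tfrac{1}{\cdot})$, i.e.\ that $\theta_t-(1-(\alpha-\beta)t)\theta=(\alpha-\beta)t\big(\theta+\tfrac{1}{\alpha-\beta}\chi\big)\ge0$). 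Consequently $\theta_t+\dc w_t\ge 0$, so $w$ is an admissible path.

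Next I would compute the parabolic defect of $w$. On the one hand $\partial_t w=-\alpha\f_0+\beta u+n\log t$. On the other hand, using $(1-\alpha t)\theta+\dc((1-\alpha t)\f_0)=(1-\alpha t)(\theta+\dc\f_0)\ge0$ and $\beta t(\theta+\dc u)\ge0$, the mixed-term inequality for positive $(1,1)$-forms gives
\[
(\theta_t+\dc w_t)^n\ \ge\ \big((1-\alpha t)(\theta+\dc\f_0)+\beta t(\theta+\dc u)\big)^n\ \ge\ \binom{n}{n}(\beta t)^n(\theta+\dc u)^n\ =\ t^n\, e^{\beta u-\alpha\f_0}f\,dV_X,
\]
where the last equality uses the defining equation $\beta^n(\theta+\dc u)^n=e^{\beta u-\alpha\f_0}f\,dV_X$. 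Hence
\[
\log\!\left[\frac{(\theta_t+\dc w_t)^n}{\mu}\right]\ \ge\ \log\big(t^n e^{\beta u-\alpha\f_0}\big)\ =\ n\log t+\beta u-\alpha\f_0\ =\ \partial_t w,
\]
so $w$ is a subsolution of~\eqref{pcmae2}. Since $w(t,\cdot)\to\f_0$ as $t\to0^+$ (the term $n(t\log t-t)\to0$ and $\beta tu\to0$ uniformly because $u$ is bounded) and $\f_0$ is the initial datum of $\f$, the comparison holds at $t=0$ as well; strictly, one compares on $[\epsilon,\alpha^{-1}-\epsilon]$ using $w(\epsilon,\cdot)\le\f_\epsilon+o(1)$ and lets $\epsilon\to0$. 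The classical parabolic maximum principle (as in Proposition~\ref{prop: C0upper}) then yields $w(t,x)\le\f_t(x)$ on $(0,\alpha^{-1})\times X$, which is the first claim. For the second claim, on the interval $t\in(1,\alpha^{-1})$ one has $1-\alpha t<0$ but bounded below, and $\f_0\le C$ by Proposition~\ref{prop: C0upper}, so $(1-\alpha t)\f_0\ge\f_0-|1-\alpha t|\,(\sup_X\f_0)^+\ge\f_0-C$; combined with $\beta t u\ge -C$ (since $u$ is bounded) and $n(t\log t-t)\ge -C(t-t\log t)$ up to an additive constant, one gets $\f_0-C(t-t\log t)\le\f_t$ after adjusting $C$.

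The main obstacle I anticipate is the boundary behavior at $t=0$: since $\f_0$ is merely $\theta$-psh (only an upper bound, from Proposition~\ref{prop: C0upper}, is available a priori), the comparison at the initial time is not completely automatic, and one must handle the parabolic Cauchy problem carefully — either by first establishing the estimate for the smooth approximants $\f_{0,j}$ (for which $w_j(0,\cdot)=\f_{0,j}$ exactly and the maximum principle applies cleanly on $[0,\alpha^{-1})$) and then passing to the limit $j\to\infty$, using that the constants depend only on $\|f\|_{L^p}$, $\sup_X\f_0$, and $\|u\|_\infty$, all of which are uniform in $j$. A secondary technical point is justifying the mixed-form inequality $(\alpha_1+\alpha_2)^n\ge\binom{n}{n}\alpha_2^n$ for smooth semipositive $\alpha_i$, which is elementary pointwise linear algebra, and checking that $u$ and $w$ are smooth enough for the pointwise maximum principle to apply (they are, since $\f_0$ — or $\f_{0,j}$ — is smooth, $u$ is the bounded solution of a Monge–Ampère equation with $L^q$ density, hence admissible, and one can run the argument on the smooth level throughout).
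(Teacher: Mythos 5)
Your proof is correct and follows essentially the same route as the paper: the same explicit subsolution $(1-\alpha t)\f_0+\beta t u+n(t\log t-t)$, the same positivity decomposition $\theta_t+\dc w_t=(1-\alpha t)(\theta+\dc\f_0)+\beta t(\theta+\dc u)+t[(\alpha-\beta)\theta+\chi]$, the same lower bound $(\theta_t+\dc w_t)^n\ge(\beta t)^n(\theta+\dc u)^n=e^{\partial_t w}\mu$, and the maximum principle (run at the level of the smooth approximants, as you note). The only blemishes are cosmetic: for $t\in(1,\alpha^{-1})$ one has $1-\alpha t>0$, not $<0$ (the relevant bound $(1-\alpha t)\f_0\ge\f_0-\alpha t\,(\sup_X\f_0)^+$ holds regardless), and the stated interval for the second claim is presumably meant to be $(0,\alpha^{-1})$, where your derivation of the "in particular" part works as written.
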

	\begin{proof}
		The proof is identical to that of~\cite[Lemma 2.9]{guedj2017regularizing}. 
	\end{proof}

	Before establishing a lower bound for the solution $\f_t$, we first prove an upper bound for its time derivative, $\dot{\f}_t:=\frac{\partial\f}{\partial t}$. 
	\begin{proposition}\label{prop: upper1dot}
		For all $(t,x)\in(0,T]\times X$,
		\begin{equation}\label{eq: upperdot}
		\dot{\f}_t(x)\leq \frac{\f_t(x)-\f_0(x)}{t}+n.
		\end{equation}
	\end{proposition}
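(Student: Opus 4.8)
The plan is to apply the parabolic maximum principle to the quantity
\[
H(t,x) := t\dot{\f}_t(x) - \f_t(x) + \f_0(x),
\]
and show $H \leq nt$ on $[0,T]\times X$, which is exactly the asserted inequality after dividing by $t$. The reason this is the right auxiliary function is that $\dot\f_t$ satisfies a nice linear parabolic equation obtained by differentiating \eqref{pcmae2} in time, and the extra terms $-\f_t+\f_0$ are designed to absorb the $\tr_{\omega_t}(\chi)$ contribution coming from the $t$-dependence of $\theta_t$.

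\textbf{Key steps.} First I would differentiate \eqref{pcmae2} with respect to $t$: since $\partial_t\log(\theta_t+\dc\f_t)^n = \tr_{\omega_t}(\chi + \dc\dot\f_t) = \Delta_t\dot\f_t + \tr_{\omega_t}\chi$, we get
\[
\partial_t\dot{\f}_t = \Delta_t\dot{\f}_t + \tr_{\omega_t}\chi.
\]
Next, compute the evolution of $H$. Using $\partial_t\f_t = \dot\f_t$ and $\Delta_t\f_t = \tr_{\omega_t}(\omega_t - \theta_t) = n - \tr_{\omega_t}\theta_t$, and similarly $\Delta_t\f_0 = \tr_{\omega_t}(\dc\f_0) = \tr_{\omega_t}(\omega_0 - \theta_0) \geq -\tr_{\omega_t}\theta$ (using $\omega_0 \geq 0$ and $\theta_0 = \theta$), one finds
\[
\left(\partial_t - \Delta_t\right)H = \dot\f_t + t\left(\Delta_t\dot\f_t + \tr_{\omega_t}\chi\right) - \dot\f_t - t\Delta_t\dot\f_t - \Delta_t\f_t + \Delta_t\f_0,
\]
so that
\[
\left(\partial_t - \Delta_t\right)H = t\,\tr_{\omega_t}\chi - n + \tr_{\omega_t}\theta_t + \tr_{\omega_t}(\dc\f_0).
\]
Now $t\,\tr_{\omega_t}\chi + \tr_{\omega_t}\theta_t = \tr_{\omega_t}(\theta + t\chi + \theta_t) - \tr_{\omega_t}\theta \geq \tr_{\omega_t}(\dc(-\f_0)) - 0$ requires care; more cleanly, write $\theta_t + \dc\f_0 = \omega_0 + t\chi \geq t\chi$ is false in general, so instead group as $\tr_{\omega_t}(\theta_t + \dc\f_0) + t\tr_{\omega_t}\chi = \tr_{\omega_t}(\theta_t + t\chi + \dc\f_0)$. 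Since $\theta_t + t\chi = \theta + 2t\chi$ need not be positive, the honest estimate is: at an interior maximum of $H$ (if it exceeds its initial value $H(0,\cdot) = 0$), we have $\partial_t H \geq 0$ and $\Delta_t H \leq 0$, hence $0 \leq t\tr_{\omega_t}\chi - n + \tr_{\omega_t}(\theta_t + \dc\f_0)$. Because $\theta_t + \dc\f_0 = \omega_0 + t\chi$ and one can arrange (shrinking to the regime near $t=0$, or using $\theta + (\alpha-\beta)\chi \geq 0$ type bounds already fixed in the text) that $\theta_t + t\chi + \dc\f_0 \geq 0$, this gives $n \leq t\tr_{\omega_t}\chi + \tr_{\omega_t}(\theta_t+\dc\f_0) = \tr_{\omega_t}(\theta_t + t\chi + \dc\f_0)$; combined with the ODE comparison along the maximum this forces $H \leq nt$. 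Dividing by $t > 0$ yields \eqref{eq: upperdot}.

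\textbf{Main obstacle.} The delicate point is controlling the sign of the zeroth-order-in-derivatives term $t\,\tr_{\omega_t}\chi + \tr_{\omega_t}(\theta_t + \dc\f_0)$ in $(\partial_t - \Delta_t)H$: one must express it as $\tr_{\omega_t}$ of a \emph{positive} form so that it is $\geq 0$ and can be bounded below by using $\omega_t^n/\mu = e^{\dot\f_t}$ together with the arithmetic–geometric mean inequality $\tr_{\omega_t}\eta \geq n(\eta^n/\omega_t^n)^{1/n}$ for positive $\eta$ — but here we instead only need the cruder fact that at a maximum the combination is $\geq n$, which follows once positivity is secured. Securing that positivity is exactly where the hypothesis that we work near $t=0$ (so that $\theta_t + t\chi = \theta + 2t\chi$ stays $\geq 0$, or more precisely $\theta_t + \dc\f_0 = \omega_0 + t\chi$ with $\omega_0$ strictly positive dominating $-t\chi$ for small $t$) enters; alternatively the cleanest route is to redo the computation with the sharper observation $\partial_t\f_t - \Delta_t\f_t = \dot\f_t - n + \tr_{\omega_t}\theta_t$ so that the $-n$ cancels against the $+n$ one wants, and the remaining $\tr_{\omega_t}(\theta_t + \dc\f_0) \geq 0$ is immediate from $\theta_t + \dc\f_0 = \omega_0 + t\chi \geq 0$ in the stated range. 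I would present it this way to avoid the spurious $t\,\tr_{\omega_t}\chi$ term altogether, and then the maximum principle closes in one line.
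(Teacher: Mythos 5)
Your strategy is exactly the paper's: apply the maximum principle to $t\dot{\f}_t-(\f_t-\f_0)-nt$. But your execution contains a sign error that derails the argument. With $H=t\dot{\f}_t-\f_t+\f_0$, subtracting $\Delta_t H$ produces $+\Delta_t\f_t-\Delta_t\f_0$, not $-\Delta_t\f_t+\Delta_t\f_0$ as in your display. Doing the bookkeeping correctly, and using $\ddot{\f}_t=\Delta_t\dot{\f}_t+\tr_{\omega_t}(\chi)$ together with $\Delta_t\f_t=n-\tr_{\omega_t}(\theta_t)$, one gets
\begin{equation*}
\left(\partial_t-\Delta_t\right)H \;=\; n+t\,\tr_{\omega_t}(\chi)-\tr_{\omega_t}(\theta_t+\dc\f_0)\;=\;n-\tr_{\omega_t}(\theta+\dc\f_0)\;\leq\; n,
\end{equation*}
because the $t\,\tr_{\omega_t}(\chi)$ term coming from the time-dependence of $\theta_t=\theta+t\chi$ cancels exactly the $t\chi$ part of $\theta_t$. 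The only positivity needed is $\theta+\dc\f_0\geq 0$, i.e.\ that $\f_0$ is $\theta$-psh, which is a standing hypothesis in this section (the approximants are smooth strictly $\theta$-psh). Then the maximum principle applied to $H-nt$, which satisfies $(\partial_t-\Delta_t)(H-nt)=-\tr_{\omega_t}(\theta+\dc\f_0)\leq 0$ and vanishes at $t=0$, gives $H\leq nt$ in one line — this is precisely the paper's proof.

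With your sign, the zeroth-order term points the wrong way and the argument cannot close: at an interior maximum you only deduce $n\leq \tr_{\omega_t}(\theta_t+t\chi+\dc\f_0)$, which is not a contradiction and does not "force $H\leq nt$" — to conclude you would need an upper bound on that trace by $2n$, which is unavailable. Your proposed "cleanest route" compounds the problem: the $t\,\tr_{\omega_t}(\chi)$ term is not spurious (it arises unavoidably from $\partial_t\log\omega_t^n$), and dropping it leaves you needing $\theta_t+\dc\f_0=\omega_0+t\chi\geq 0$ on all of $(0,T]$, which the hypotheses of the section do not provide ($T$ is not assumed small relative to $\chi$ and $\omega_0$; only $\theta_t\geq(1-at/S)\theta$ is available). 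Keeping that term is exactly what reduces the required positivity from $\theta_t+\dc\f_0$ to $\theta+\dc\f_0$ and removes the obstacle you flagged.
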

	\begin{proof}
		The proof is identical to that of~\cite[Proposition 3.1]{guedj2017regularizing} (also in~\cite{guedj2020pluripotential}). 
	\end{proof}
	
	We follow the approach in~\cite{di2017uniqueness} to derive the following uniform estimate for the complex parabolic Monge--Ampère equation. 
	
	\begin{theorem}\label{thm: C0estimate}
		Fix $\varepsilon>p^*\varepsilon_0$. For $t\in[\varepsilon,T]$,  we obtain the following estimate:
		\begin{equation*}
		\f_t\geq \left(1-\frac{bt}{T}\right)\psi_0-C,
		\end{equation*} where $b\in(a,1/2)$ and $C>0$ is a uniform constant. 
	\end{theorem}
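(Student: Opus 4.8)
\textbf{Proof strategy for Theorem~\ref{thm: C0estimate}.}

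The plan is to reduce the parabolic $\mathcal{C}^0$ lower bound to an application of the elliptic estimate in Lemma~\ref{lem: C0_estimate}, applied at each fixed time slice $t\in[\varepsilon,T]$. First I would record the basic structural facts. Writing $\dot\f_t=\log(\omega_t^n/\mu)$, one has $\omega_t^n=e^{\dot\f_t}\mu$, and from Proposition~\ref{prop: upper1dot} the bound $\dot\f_t\leq (\f_t-\f_0)/t+n$ gives
\begin{equation*}
\omega_t^n=e^{\dot\f_t}fdV_X\leq e^{\frac{\f_t-\f_0}{t}+n}fdV_X.
\end{equation*}
Since $\theta_t\geq\left(1-\frac{at}{S}\right)\theta$ with $a<1/2$, for $t\leq T$ the form $\theta_t$ is big (it dominates a fixed positive multiple of the big form $\theta$), so Lemma~\ref{lem: C0_estimate} is available with $\theta$ there replaced by $\theta_t$. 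I would apply it with $A=1/t$, $g=\f_0/t$, the density $f$, exponent $q$ (chosen slightly bigger than $1$ so that $e^{-q\f_0/t}f^q\in L^1$; this is exactly where the hypothesis $\varepsilon>p^*\varepsilon_0$ together with $T>p^*/(2c(\f_0))$ enters, ensuring the needed integrability uniformly for $t\geq\varepsilon$), and comparison function $\psi:=\left(1-\frac{bt}{T}\right)\psi_0$ for an appropriate fixed $b\in(0,1)$.

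The key verification is that this $\psi$ is $\delta\theta_t$-psh for some $\delta\in(0,1)$ and that $e^{q(A\psi-g)}f^q$ has uniformly bounded integral. For the positivity: from \eqref{eq: kappa}, $\theta+\dc\psi_0\geq 2\kappa\omega_X$, so $\left(1-\frac{bt}{T}\right)(\theta+\dc\psi_0)=\left(1-\frac{bt}{T}\right)\theta+\dc\psi\geq\left(1-\frac{bt}{T}\right)2\kappa\omega_X\geq 0$; since $\theta_t\geq\left(1-\frac{at}{S}\right)\theta\geq c_1\theta$ for a fixed $c_1>0$ on $[0,T]$, one gets $\psi\in\PSH(X,\delta\theta_t)$ for a fixed $\delta\in(0,1)$ provided $b$ is chosen so that $1-bt/T\leq\delta c_1$ for all $t\in[\varepsilon,T]$, which is possible by taking $b$ close to $1$. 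For the integrability: one has $A\psi-g=\frac{1}{t}\big[(1-\tfrac{bt}{T})\psi_0-\f_0\big]$, and since $\psi_0$ is less singular than $\f_0$ and $\psi_0\leq 0$, we can split $\frac{1}{t}[(1-\tfrac{bt}{T})\psi_0-\f_0]=\frac{1-bt/T}{t}(\psi_0-\f_0)+\big(\tfrac{1-bt/T}{t}-\tfrac{1}{t}\big)(-\f_0)+\text{(something controlled)}$ — more cleanly, bound $e^{q(A\psi-g)}f^q$ by a product of $e^{\frac{2}{\varepsilon_0}(\psi_0-\f_0)}$ and $e^{-\frac{p^*}{T}\psi_0}$ up to exponents via Hölder, so that the integral is dominated by a combination of $E_1$ and $E_2$, both finite and independent of $t$ and of the approximation index $j$. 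This is where the precise choice $\varepsilon>p^*\varepsilon_0$ is used to make the Hölder exponents fit.

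Assembling: Lemma~\ref{lem: C0_estimate} then yields $\f_t\geq\psi-C=\left(1-\frac{bt}{T}\right)\psi_0-C$ with $C$ depending only on $n$, $q$, $\theta$, $\delta$, and the bound on $\int_X e^{q(A\psi-g)}f^q\,dV_X$, hence uniform in $t\in[\varepsilon,T]$ and in $j$. I expect the main obstacle to be the bookkeeping in the integrability step: one must verify that all the exponents appearing after applying Hölder to split $e^{q(A\psi-g)}f^q$ into pieces controlled by $\|f\|_{L^p}$, $E_1$, and $E_2$ stay in the admissible ranges uniformly for $t\in[\varepsilon,T]$, and that the constant produced is genuinely independent of the regularizing parameter $j$ so that the estimate survives passing to the limit — this is exactly the point where the hypotheses $\varepsilon>p^*\varepsilon_0$ and $p^*/(2c(\f_0))<T$ are consumed, and getting the quantitative dependence right (rather than merely finite) is the delicate part.
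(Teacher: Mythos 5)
Your overall strategy is exactly the paper's: combine Proposition~\ref{prop: upper1dot} with the elliptic Lemma~\ref{lem: C0_estimate} applied at each time slice, with $A=1/t$, $g=\f_0/t-n$ and comparison function $\psi_t=\left(1-\frac{bt}{T}\right)\psi_0$, controlling the $L^q$ norm by H\"older in terms of $\|f\|_{L^p}$, $E_1$ and $E_2$. However, your handling of what you yourself call the key verification --- that $\psi_t$ is $\delta\theta_t$-psh for a uniform $\delta\in(0,1)$ --- is wrong, and the error propagates. You replace the time-dependent bound $\theta_t\geq\left(1-\frac{at}{S}\right)\theta$ by the uniform bound $\theta_t\geq c_1\theta$ with $c_1=1-aT/S$, and then require $1-bt/T\leq\delta c_1$ for all $t\in[\varepsilon,T]$, which forces you to push $b$ towards $1$. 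At the worst slice $t=\varepsilon$ this requires $b>aT^2/(S\varepsilon)$, which exceeds $1$ as soon as $\varepsilon<aT^2/S$; since $\varepsilon$ is only constrained to exceed $p^*\varepsilon_0$, this certainly happens when $a>0$. The correct observation (the paper's) is that $1-bt/T$ and $1-at/S$ both degenerate to $1$ together as $t\to0$, and that whenever $bS>aT$ one has $\frac{1-bt/T}{1-at/S}<1$ for all $t>0$, the supremum over $[\varepsilon,T]$ being attained at $t=\varepsilon$; this yields a uniform $\delta\in(0,1)$ for any $b\in(a,1/2)$, with no need to take $b$ large.

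The choice of $b$ close to $1$ also breaks the integrability step. After splitting $e^{(\psi_t-\f_0)/t}=e^{(\psi_0-\f_0)/t}\cdot e^{-b\psi_0/T}$ and applying H\"older with exponents $\frac1q=\frac1p+\frac{1}{2p^*+\gamma}+\frac{1}{2p^*+\gamma}$, the last factor requires $\int_X e^{-(2p^*+\gamma)b\psi_0/T}\,dV<+\infty$. The only available information is $E_2=\int_Xe^{-p^*\psi_0/T}\,dV<+\infty$ (a consequence of $p^*/2c(\f_0)<T$ and of $\psi_0$ being less singular than $\f_0$), so one needs $(2p^*+\gamma)b\leq p^*$, i.e. $b<1/2$. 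With $b$ near $1$ you would need $e^{-2p^*\psi_0/T}\in L^1$, which is not guaranteed. So the constraint $b\in(a,1/2)$, close to $a$, is not cosmetic: it is forced by the integrability, and it is reconciled with the positivity only through the sharper $t$-dependent comparison above. Once $b$ is chosen this way and the positivity is verified as indicated, the rest of your argument (uniformity in $t\in[\varepsilon,T]$ and in the regularizing index $j$, via $E_1$ for the factor $e^{(2p^*+\gamma)(\psi_0-\f_0)/t}$ using $\varepsilon>p^*\varepsilon_0$) goes through as in the paper.
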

	\begin{proof}
		Fixing $t\in[\varepsilon,T]$, it follows from Proposition~\ref{prop: upper1dot} that
		\begin{equation*}
		(\theta_t+\dc\f_t)^n=e^{\dot{\f}_t}\leq e^{n+\frac{\f_t-\f_0}{t}}fdV.
		\end{equation*}
		We set
		\begin{equation*}
		\psi_t:=\left(1-\frac{bt}{T} \right)\psi_0,
		\end{equation*} for $b\in (a,1/2)$ close to $a$.
		We recall that
		\[\theta_t\geq \left( 1-\frac{at}{S}\right)\theta,\]
		it then follows that $\psi_t$ is  $\delta\theta_t$-psh with $\delta\in (0,1)$ only depending on $\varepsilon_0$, $a$, $b$, $T$, $S$ (more precisely, $\delta=\frac{TS-bS\varepsilon_0}{TS-aT\varepsilon_0}$). Using similar arguments as in the proof of~\cite[Theorem 3.2]{di2017uniqueness}, we can bound the following quantity:
		\begin{equation}\label{eq: Lqmass}
		\int_X e^{\frac{q(\psi_t-\f_0)}{t}}f^qdV<+\infty, 
		\end{equation} for some $q>1$, in terms of $\|f\|_{L^p}$, $E_1$ and $E_2$. To establish this, we fix $\gamma>0$ small enough and choose $q>1$ such that
		\[\frac{1}{q}=\frac{1}{p}+\frac{1}{2p^*+\gamma}+\frac{1}{2p^*+\gamma}.\]
		By H\"older's inequality, we obtain
		\[\int_X e^{\frac{q(\psi_t-\f_0)}{t}}f^qdV\leq \|f\|_{L^p}^q \left(\int_X e^{\frac{(2p^*+\gamma)(\psi_0-\f_0)}{t}} dV \right)^{\frac{q}{2p^*+\gamma}}\left(\int_X e^{-\frac{(2p^*+\gamma)b\psi_0}{T}} dV\right)^{\frac{q}{2p^*+\gamma}}\]
		The second term on the right-hand side is finite due to the construction of $\psi_0$ in Lemma~\ref{lem: equisingular}. Also, since $\psi_0$ is less singular than $\f_0$, the third term is finite.
		
		From~\eqref{eq: Lqmass}, we apply Lemma~\ref{lem: C0_estimate}  with $A=1/t$ and $g=\f_0/t-n$ to obtain the desired estimate. It is important to note that our $\mathcal{C}^0$-estimate depends only on $n$, $\theta$, $q$, the fixed parameters $\varepsilon_0$, $\varepsilon$, $T$, $S$, and an upper bound for $E_1$ and $E_2$. 
	\end{proof}

	\begin{remark}
		When $\f_0$ is bounded or, more generally, has zero Lelong numbers, it was shown in~\cite{to2018regularizing} (generalizing the result of~\cite{guedj2017regularizing} in the K\"ahler context) that the estimate~\eqref{eq: upperdot} ensures a lower bound for $\f_t$ using Kolodziej--Nguyen's theorem~\cite{kolodziej2015weak}. Unfortunately, this method cannot be applied in more general cases, such as when $\f_0$ is more singular, for example, when it has a positive Lelong number. To analyze the singularities of the initial potential $\f_0$ in such cases, Guedj--Lu's approach~\cite{guedj2021quasi} could help.  
	\end{remark}

	\subsection{Laplacian estimate}
	We recall the following classical inequality
	\begin{lemma}\label{lem34}
		Let $\alpha$, $\beta$ be two positive (1,1)-forms. Then
		\begin{equation*}
		n\left( \frac{\alpha^n}{\beta^n}\right)^{\frac{1}{n}}\leq \textrm{tr}_\beta(\alpha)\leq n\left(\frac{\alpha^n}{\beta^n}\right)(\textrm{tr}_\alpha(\beta))^{n-1}.
		\end{equation*}
	\end{lemma}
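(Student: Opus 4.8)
This is a standard pair of inequalities relating the trace of one positive $(1,1)$-form with respect to another to the ratio of their top exterior powers. The strategy is purely pointwise linear algebra: fix a point $x \in X$ and diagonalize the two Hermitian forms simultaneously. Since $\beta$ is a positive $(1,1)$-form, we may choose local coordinates (equivalently, a basis of $T_x^{1,0}X$) in which $\beta = \sum_{j=1}^n i\, dz_j \wedge d\bar z_j$ and $\alpha = \sum_{j=1}^n \lambda_j\, i\, dz_j \wedge d\bar z_j$ with all $\lambda_j > 0$. In these coordinates $\textrm{tr}_\beta(\alpha) = \sum_j \lambda_j$, $\alpha^n/\beta^n = \prod_j \lambda_j$, and $\textrm{tr}_\alpha(\beta) = \sum_j \lambda_j^{-1}$.

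For the left-hand inequality, I would invoke the arithmetic–geometric mean inequality:
\begin{equation*}
\frac{1}{n}\sum_{j=1}^n \lambda_j \;\geq\; \left(\prod_{j=1}^n \lambda_j\right)^{1/n},
\end{equation*}
which rearranges exactly to $n(\alpha^n/\beta^n)^{1/n} \leq \textrm{tr}_\beta(\alpha)$.

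For the right-hand inequality, the key observation is that for each fixed index $k$,
\begin{equation*}
\lambda_k \;=\; \left(\prod_{j=1}^n \lambda_j\right)\cdot \prod_{j\neq k} \lambda_j^{-1} \;\leq\; \left(\prod_{j=1}^n \lambda_j\right)\left(\sum_{j=1}^n \lambda_j^{-1}\right)^{n-1},
\end{equation*}
since each factor $\lambda_j^{-1}$ in the product is bounded by the full sum $\sum_j \lambda_j^{-1}$ (all terms being positive), and there are $n-1$ such factors. Summing over $k = 1,\dots,n$ gives
\begin{equation*}
\textrm{tr}_\beta(\alpha) = \sum_{k=1}^n \lambda_k \;\leq\; n\left(\prod_{j=1}^n \lambda_j\right)\left(\sum_{j=1}^n \lambda_j^{-1}\right)^{n-1} = n\left(\frac{\alpha^n}{\beta^n}\right)\big(\textrm{tr}_\alpha(\beta)\big)^{n-1},
\end{equation*}
as desired. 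There is no real obstacle here: the only point requiring a word of care is that the simultaneous diagonalization is legitimate (one form is positive definite, so this is the standard generalized eigenvalue decomposition), and that both sides of each inequality are expressed in terms of the invariants $\textrm{tr}_\beta(\alpha)$, $\textrm{tr}_\alpha(\beta)$, and $\alpha^n/\beta^n$, which do not depend on the chosen diagonalizing basis, so the pointwise computation yields the claimed inequality of functions on $X$.
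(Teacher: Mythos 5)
Your proof is correct, and since the paper states this lemma as a classical fact without providing any proof, your pointwise simultaneous-diagonalization argument (AM--GM for the left inequality, and bounding each $\lambda_j^{-1}$ by $\sum_i\lambda_i^{-1}$ for the right one) is exactly the standard derivation that the author implicitly relies on. No gaps.
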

	We define
	\begin{equation*}\label{def: Psi}
	\Psi_t:=\left(1-\frac{bt}{S}\right)\psi_0,
	\end{equation*} where $\p_0$ is defined in Lemma~\ref{lem: equisingular} with $\varepsilon_0>0$ fixed.
	\smallskip
	
	To establish the $\mathcal{C}^2$-estimate, it is necessary to derive a lower bound for $\dot{\f}_t=\frac{\partial \f}{\partial t}$. 
	
	\begin{proposition}\label{prop: 1dot}Fix $\varepsilon>p^*\varepsilon_0$.
		For $(t,x)\in(\varepsilon,T]\times X$, \begin{equation*}
		\dot{\f}_t(x)\geq n\log(t-\varepsilon)+A(\Psi_t-\f_t)-C
		\end{equation*} where $A, C>0$ are positive constants only depending on $\varepsilon$, $T$, $\|f\|_{L^p}$, and an upper bound for $E_1$ and $E_2$.
	\end{proposition}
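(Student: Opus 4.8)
The plan is to run a parabolic maximum principle argument on a suitable auxiliary quantity, exactly in the spirit of the lower bound for $\dot\f_t$ in the K\"ahler-Ricci flow literature (cf.~\cite{guedj2017regularizing,di2017uniqueness}). First I would introduce the test function
\begin{equation*}
    G(t,x):=(t-\varepsilon)\dot\f_t(x)-A\bigl(\Psi_t(x)-\f_t(x)\bigr)-nt,
\end{equation*}
with $A>0$ a large constant to be fixed. The goal is to show that $\bigl(\partial_t-\Delta_t\bigr)G$ is bounded above by something that cannot attain an interior positive maximum, so that $G$ is controlled by its value at $t=\varepsilon$, which vanishes; this yields $\dot\f_t\ge n\log(t-\varepsilon)+A(\Psi_t-\f_t)-C$ after absorbing the elementary term $n\log(t-\varepsilon)$ (coming from differentiating $(t-\varepsilon)\dot\f_t$ against the concavity of $\log$). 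Some care is needed because $\Psi_t$ and $\f_t$ are only smooth on a Zariski open set, but since $\Psi_t-\f_t\to+\infty$ near the singular locus (as $\psi_0$ is \emph{less singular} than $\f_0$, hence than $\f_t$, at least near $t=0$), $G\to-\infty$ there and the maximum is attained in the smooth region, so the computation is legitimate.

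The core computation is the evolution of the three pieces of $G$. Differentiating the flow equation $\dot\f_t=\log(\omega_t^n/\mu)$ gives $\partial_t\dot\f_t=\Delta_t\dot\f_t+\tr_{\omega_t}\chi$, so $\bigl(\partial_t-\Delta_t\bigr)\bigl[(t-\varepsilon)\dot\f_t\bigr]=\dot\f_t+(t-\varepsilon)\tr_{\omega_t}\chi$. For the term $-A\f_t$ one has $\bigl(\partial_t-\Delta_t\bigr)(-A\f_t)=-A\dot\f_t+A\Delta_t\f_t=-A\dot\f_t+A\bigl(n-\tr_{\omega_t}\theta_t\bigr)$, producing the crucial good negative term $-A\tr_{\omega_t}\theta_t$. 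For $A\Psi_t=A(1-bt/S)\psi_0$ one gets $\partial_t(A\Psi_t)=-\tfrac{Ab}{S}\psi_0$, which is harmless since $\psi_0\le 0$, and $-\Delta_t(A\Psi_t)=-A(1-bt/S)\tr_{\omega_t}(\dc\psi_0)$; using~\eqref{eq: kappa}, $\theta+\dc\psi_0\ge 2\kappa\omega_X$, this is bounded above by $A(1-bt/S)\tr_{\omega_t}\theta-2\kappa A(1-bt/S)\tr_{\omega_t}\omega_X$. Collecting everything, and using that $\theta_t$ and $\theta$ are comparable to Hermitian metrics on $[\varepsilon,T]$ (so $\tr_{\omega_t}\theta_t$ and $\tr_{\omega_t}\theta$ differ by bounded factors) while $(t-\varepsilon)\tr_{\omega_t}\chi$ is dominated by $C\tr_{\omega_t}\omega_X$, one arranges that for $A$ large enough the net coefficient of $\tr_{\omega_t}\omega_X$ is negative; since $\tr_{\omega_t}\omega_X\ge n(\omega_X^n/\omega_t^n)^{1/n}$ by Lemma~\ref{lem34}, and $\omega_t^n=e^{\dot\f_t}\mu$, this term controls the remaining $\dot\f_t$ via a standard inequality $x\le Ce^{x/n}+C$. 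So at an interior maximum of $G$ we obtain $(t-\varepsilon)\dot\f_t\le A(\Psi_t-\f_t)+nt+C$, i.e. $G\le C$ everywhere.

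From $G\le C$ we read off $(t-\varepsilon)\dot\f_t\ge -C$ together with... no — more precisely $G(t,x)\le C$ gives $(t-\varepsilon)\dot\f_t\ge A(\Psi_t-\f_t)+nt-C$; this is the \emph{wrong} sign. Instead I would conclude by noting that the maximum principle applied to $G$ gives $G\le\sup_{t=\varepsilon}G=0$, hence $(t-\varepsilon)\dot\f_t\le A(\Psi_t-\f_t)+nt$; to get the stated \emph{lower} bound for $\dot\f_t$ I would rather run the argument on $\tilde G(t,x):=(t-\varepsilon)\dot\f_t-A(\f_t-\Psi_t)-nt$ and seek its \emph{minimum}, showing $\bigl(\partial_t-\Delta_t\bigr)\tilde G\ge -C$ at an interior minimum via the good term $+A\tr_{\omega_t}\theta_t$ combined with $\tr_{\omega_t}\omega_X\ge n e^{-\dot\f_t/n}(\text{bounded})$, which forces $\dot\f_t$ bounded below there; since $\tilde G\to+\infty$ near the singular set, the minimum is interior (or on $t=\varepsilon$, where it is controlled), giving $\tilde G\ge -C$, which is exactly $\dot\f_t\ge n\log(t-\varepsilon)+A(\Psi_t-\f_t)-C$ after noting $\log(t-\varepsilon)$ is absorbed into the $(t-\varepsilon)\dot\f_t$ normalization. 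The main obstacle is bookkeeping the Hermitian error terms (the non-K\"ahler torsion contributions $\tr_{\omega_t}(\text{torsion of }\omega_X)$ and the mixed terms from $\chi$ not being semi-positive) so that the coefficient of the good term $-A\tr_{\omega_t}\theta_t$ genuinely dominates; this is where the comparability $\theta_t\ge\tfrac12\theta$ on $[\varepsilon,T]$ and the bigness/positivity~\eqref{eq: kappa} of $\theta+\dc\psi_0$ are essential, and where one must choose $A=A(\varepsilon,T,\kappa,\ldots)$ carefully.
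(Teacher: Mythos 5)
Your overall strategy --- a parabolic minimum principle applied to $\dot\f_t$ plus a large multiple of $\f_t-\Psi_t$, with the good trace term coming from $\theta_t+\dc\Psi_t\ge \kappa\,\omega_X$ --- is indeed the paper's approach. But the execution contains genuine errors, and the version you finally settle on would not close. First, the sign. Your original $G=(t-\varepsilon)\dot\f_t+A(\f_t-\Psi_t)-nt$ was the right test function; your ``corrected'' $\tilde G=(t-\varepsilon)\dot\f_t-A(\f_t-\Psi_t)-nt$ is the wrong one, for two reasons. (i) Localization: by Theorem~\ref{thm: C0estimate} and $\sup_X\f_t\le C$, one has $\f_t-\Psi_t\ge -C$ and $\f_t-\Psi_t\to+\infty$ where $\psi_0\to-\infty$ (your claim that $\Psi_t-\f_t\to+\infty$ there is backwards). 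Hence $\tilde G\to-\infty$ at the singular set and its minimum is \emph{not} attained in the smooth locus, so the computation is not legitimate; it is $G$ (with $+A(\f_t-\Psi_t)$) whose \emph{minimum} localizes correctly. (ii) The evolution: $(\partial_t-\Delta_t)(+A\f_t)=A\dot\f_t-An+A\tr_{\omega_t}\theta_t$, so the useful positive term $+A\tr_{\omega_t}(\theta_t+\dc\Psi_t)$ --- needed to absorb $\tr_{\omega_t}\chi$ and to dominate a positive trace --- comes from the $+A(\f_t-\Psi_t)$ sign; with your $\tilde G$ it appears with a minus sign and the inequality $(\partial_t-\Delta_t)\tilde G\le 0$ at the minimum yields nothing. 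You should simply have kept $G$ and run the \emph{minimum} (not maximum) principle on it.

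Second, and independently, your mechanism for extracting the lower bound on $\dot\f_t$ at the minimum point has a gap. You invoke $\tr_{\omega_t}\omega_X\ge n(\omega_X^n/\omega_t^n)^{1/n}=n e^{-\dot\f_t/n}(\omega_X^n/\mu)^{1/n}$ and assert the last factor is ``bounded''. But $(\omega_X^n/\mu)^{1/n}=f^{-1/n}=e^{(\psi^--\psi^+)/n}$ tends to $0$ where $\psi^-\to-\infty$, i.e.\ exactly where the density blows up, so this inequality does not control $e^{-\dot\f_t/n}$ from below uniformly. The paper circumvents this by introducing a bounded $\theta$-psh potential $\phi_1$ with $(\theta+\dc\phi_1)^n=e^{c_1}\mu$ (from \cite[Theorem 3.4]{guedj2021quasi}), inserting $-\phi_1$ into $G$, choosing $A$ so that $A(\theta_t+\dc\Psi_t)+\chi\ge\theta$, and using Lemma~\ref{lem34} in the form $\tr_{\omega_t}(\theta+\dc\phi_1)\ge n e^{(c_1-\dot\f_t)/n}$, where $\mu$ cancels exactly. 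Without $\phi_1$ (or a substitute carrying $\psi^-$) your ``standard inequality $x\le Ce^{x/n}+C$'' step fails. Finally, you do not address the last step: at the minimum one only gets $\dot\f_t\ge -n\log\bigl(C+C|\psi_0|\bigr)$, and one must check that $G(t_0,x_0)\ge -C$ by balancing this logarithmic loss against the term $A(\f_t-\Psi_t)\ge c\,|\psi_0|-C$ supplied by Theorem~\ref{thm: C0estimate} (linear beats logarithmic); this balancing, together with putting $-n\log(t-\varepsilon)$ directly into $G$ rather than multiplying $\dot\f_t$ by $(t-\varepsilon)$, is what produces the estimate in the stated form.
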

	\begin{proof}
		The proof is almost identical to that of~\cite[Proposition 3.5]{di2017uniqueness}. The only difference is that we use Theorem~\ref{thm: C0estimate} instead of the corresponding result in~\cite[Theorem 3.2]{di2017uniqueness}. We include the proof for the reader's convenience.
		
		Since $\mu=fdV$ is a smooth volume form, the main result of Tosatti--Weinkove ~\cite{tosatti2010complex} ensures that there exists a constant $c_1$ and $\phi_1\in\PSH(X,\theta)\cap \mathcal{C}^\infty(X)$ such that
		\begin{equation*}(\theta+\dc\phi_1)^n=e^{c_1}\mu, \quad\sup_X\phi_1=0. \end{equation*}
		From~\cite[Theorems 2.2, 3.4]{guedj2021quasi}, it follows that $|c_1|+\|\phi_1\|_{L^\infty}\leq C$, where $C>0$ depends only on the semi-positivity and bigness of $\theta$, $n$, $dV_X$, $p$ and $\|f\|_p$.
		We define \[G(t,x):=\dot{\f}_t(x)+A(\f_t-\Psi_t)-\phi_1-n\log(t-\varepsilon)\] for a constant $A>0$ to be determined hereafter. Observe that $G$ achieves its minimum on $[\varepsilon,T]\times X$
		at some point $(t_0,x_0)\in (\varepsilon,T]\times (X\backslash\{\psi_0=-\infty\})$. In the following, all computations will be performed at this point. We compute
		\begin{equation*}\begin{split}
		\left(\frac{\partial }{\partial t}-\Delta_{t} \right)G=A\dot{\f}_t-\frac{n}{t-\varepsilon}+A\frac{b\psi_0}{S}-nA+A\tr_{\omega_t}(\theta_t+\dc \Psi_t)+\tr_{\omega_t}(\chi+\dc\phi_1).
		\end{split}
		\end{equation*}
		We observe that
		\begin{equation*}
		\begin{split}
		\theta_t+\dc\Psi_t&=\frac{t(b-a)}{S}\theta+  \left(1-\frac{bt}{S}\right)(\theta+\dc\psi_0)\\
		&\geq \frac{\varepsilon(b-a)}{S}\theta+\frac{1}{2}2\kappa\omega_X.
		\end{split}
		\end{equation*}
		We now choose $A>0$ so big that
		\[A( \theta_t+\dc\Psi_t)+\chi\geq \theta.\]
		Therefore
		\begin{equation}\label{eq: prop_dot}
		\left(\frac{\partial }{\partial t}-\Delta_{t} \right)G\geq A\dot{\f}_t-\frac{n}{t-\varepsilon}+A\frac{b\psi_0}{S}-nA+\tr_{\omega_t}(\theta+\dc\phi_1).
		\end{equation}
		On the other hand, Lemma~\ref{lem34} ensures that
		\[\tr_{\omega_t}(\theta+\dc\phi_1)\geq n\left( \frac{(\theta+\dc\phi_1)^n}{\omega_t^n}\right)^{1/n} =n e^{\frac{-\dot{\f}_t+c_1}{n}. }\]
		Using the elementary inequality $\gamma y-\log y\geq -C_{\gamma}$ for any small constant $\gamma>0$ and $y>0$, we observe that
		\[A\dot{\f}_t+n e^{\frac{-\dot{\f}_t+c_1}{n}}\geq   e^{\frac{-\dot{\f}_t}{n}-C_1}-C_2. \]
		Substituting this into~\eqref{eq: prop_dot}, it follows from the minimum principle that at $(t_0,x_0)$,
		\begin{equation*}
		\dot{\f}_t\geq -n\log\left( C_2+\frac{n}{t-\varepsilon}-\frac{Ab\psi_0}{S}+nA\right)-nC_1,
		\end{equation*}
		and hence,
		\begin{equation*}
		\begin{split}
		G(t_0,x_0) \geq -C_3-n\log\left( C_2(t_0-\varepsilon)+{n}-\frac{Ab(t_0-\varepsilon)\psi_0}{S}\right)-\frac{Abt_0(S-T)}{ST}\psi_0
		\end{split}
		\end{equation*}
		where we have used Theorem~\ref{thm: C0estimate}. Thus, we obtain a uniform lower bound for $G(t_0,x_0)$, and the desired lower bound follows.
	\end{proof}

	We are now in a position to establish the $\mathcal{C}^2$-estimate. We follow the computations of~\cite[Lemma 4.1]{tosatti2015evolution} (see also~\cite[Lemma 6.4]{to2018regularizing}), where they use the technical trick introduced by Phong and Sturm~\cite{phong2010dirichlet}.
	Recall that the measure $\mu$ is of the form
	\[\mu=e^{\psi^+-\psi^-}dV_X\] where $\psi^{\pm}$ are smooth $K\omega_X$-psh functions on $X$ for uniform constant $K>0$. For simplicity, we assume $K=1$ and normalize $\sup_X\psi^{\pm}=0$.
	\begin{theorem}\label{thm: C2estimate}
		Fix $\varepsilon>p^*\varepsilon_0$.
		For $(t,x)\in[\varepsilon,T]\times X$ we have the following bound
		\begin{equation*}
		(t-\varepsilon)\log\textrm{tr}_{\omega_X}(\omega_t)\leq -B\psi_0-C\psi^{-}+C
		\end{equation*} where $B$, $C$ are positive constants depending only on $\varepsilon$, $T$, $\|e^{-\psi^-}\|_{L^p}$, and an upper bound for $E_1$ and $E_2$.
	\end{theorem}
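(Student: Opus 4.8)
The plan is to run a parabolic maximum principle on a suitable modification of $\log\tr_{\omega_X}\omega_t$, following the Aubin--Yau / Phong--Sturm second-order scheme as carried out for the Chern--Ricci flow in~\cite[Lemma 4.1]{tosatti2015evolution} (see also~\cite[Lemma 6.4]{to2018regularizing}), but incorporating the two features that go beyond the K\"ahler case of~\cite{di2017uniqueness}: the background $\theta_t$ is merely semi-positive, which is repaired by the function $\Psi_t=(1-bt/S)\psi_0$, and the measure $\mu=e^{\psi^+-\psi^-}dV$ is singular, which forces the factor $\psi^-$ to be tracked explicitly. To begin, note that combining~\eqref{eq: kappa} with $\theta_t\ge(1-at/S)\theta$ and the choice $b\in(a,1/2)$ gives, exactly as in the proof of Proposition~\ref{prop: 1dot},
\[
    \theta_t+\dc\Psi_t\ \ge\ \kappa'\,\omega_X\qquad\text{on }[\varepsilon,T]\times X
\]
for some $\kappa'>0$. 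Writing $\widetilde\f_t:=\f_t-\Psi_t$, I would work with an auxiliary function of the form
\[
    G(t,x)\ :=\ (t-\varepsilon)\log\tr_{\omega_X}\omega_t\ -\ A\,\widetilde\f_t\ +\ C_1\,\psi^{-},
\]
with $A\gg C_1\gg1$ to be fixed later, and aim to prove $G\le C'$ on $[\varepsilon,T]\times X$ for a uniform $C'$. Since $\f_t\le C$ by Proposition~\ref{prop: C0upper}, since $\Psi_t\ge\psi_0$ and $\f_t\ge(1-bt/T)\psi_0-C\ge\Psi_t-C$ by Theorem~\ref{thm: C0estimate} (using $T<S$), and since $\psi^-\le0$, such a bound $G\le C'$ rewrites as the claimed estimate with $B=A$.

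The core step is the differential inequality for $G$. On the one hand, $\dc\widetilde\f_t=\omega_t-(\theta_t+\dc\Psi_t)$ gives $\Delta_t\widetilde\f_t=n-\tr_{\omega_t}(\theta_t+\dc\Psi_t)$ and $\partial_t\widetilde\f_t=\dot\f_t+\tfrac{b}{S}\psi_0$, hence $(\partial_t-\Delta_t)\widetilde\f_t\ge\dot\f_t+\tfrac{b}{S}\psi_0-n+\kappa'\tr_{\omega_t}\omega_X$. On the other hand, the Chern--Ricci Laplacian computation, after the Phong--Sturm modification~\cite{phong2010dirichlet} that cancels the first-order torsion terms of $\omega_X$, yields a constant $C_0$ (depending only on $\omega_X$, on $\chi$ and on $\psi^\pm$) with $(\partial_t-\Delta_t)\log\tr_{\omega_X}\omega_t\le C_0\tr_{\omega_t}\omega_X+C_0$. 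One then examines $G$ at a maximum point $(t_0,x_0)$ over $[\varepsilon,T]\times X$, which necessarily lies in $\{\psi_0>-\infty\}\cap\{\psi^->-\infty\}$. If $t_0=\varepsilon$ then $G(\varepsilon,\cdot)\le-A(\f_\varepsilon-\Psi_\varepsilon)$ is bounded above. If $t_0>\varepsilon$ then $(\partial_t-\Delta_t)G\ge0$ at $(t_0,x_0)$; combining the two inequalities above and choosing $A$ with $A\kappa'>(T-\varepsilon)C_0$ absorbs the $\tr_{\omega_t}\omega_X$-terms, so that at $(t_0,x_0)$ one is left with a bound of the shape $A\dot\f_t\le\log\tr_{\omega_X}\omega_t+C(1-\psi_0)$, possibly with an extra term controlled by $-\dc\psi^-\le\omega_X$.

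To close, I would insert the lower bound $\dot\f_t\ge n\log(t-\varepsilon)+A'(\Psi_t-\f_t)-C$ from Proposition~\ref{prop: 1dot}, and then use the arithmetic--geometric mean inequalities of Lemma~\ref{lem34} together with $\omega_t^n/\omega_X^n=e^{\dot\f_t+\psi^+-\psi^-}$ to trade $\tr_{\omega_X}\omega_t$ and $\tr_{\omega_t}\omega_X$ against $\dot\f_t$ and $\psi^\pm$, controlling the residual $\f_t,\Psi_t$ by Proposition~\ref{prop: C0upper} and Theorem~\ref{thm: C0estimate}: since $\psi^+\le0$ its contribution is benign, whereas the factor $e^{-\psi^-}$ — whose $L^p$-norm enters the constants — is what produces the $-C\psi^-$ term. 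After rearranging, this yields $\log\tr_{\omega_X}\omega_t\le C(1-\psi_0-\psi^-)$ at $(t_0,x_0)$, hence $G\le C'$ globally, and rewriting gives $(t-\varepsilon)\log\tr_{\omega_X}\omega_t\le-B\psi_0-C\psi^-+C$. I expect the main obstacle to be, as always in the Hermitian case, the first half of the core step: obtaining a Laplacian inequality with \emph{no} uncontrolled first-order terms, which is precisely where the Phong--Sturm device is needed and where one must check that the affine term $t\chi$ and the singular part of $\dc\log\mu$ do not spoil its structure. A secondary, more bookkeeping difficulty is to keep track of the exact multiples of $\psi_0$ and $\psi^-$ throughout, so that the final constants depend only on $\varepsilon$, $T$, $\|e^{-\psi^-}\|_{L^p}$ and on upper bounds for $E_1$, $E_2$ (which enter via Theorem~\ref{thm: C0estimate} and Proposition~\ref{prop: 1dot}).
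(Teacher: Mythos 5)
Your overall scheme (parabolic maximum principle on $(t-\varepsilon)\log\tr_{\omega_X}\omega_t$ corrected by $\f_t-\Psi_t$ and $\psi^-$, closed with Proposition~\ref{prop: 1dot}, Lemma~\ref{lem34} and Theorem~\ref{thm: C0estimate}) is the same as the paper's, but there is a genuine gap at the step you yourself flag as ``the main obstacle''. You assert that, ``after the Phong--Sturm modification'', one has the clean inequality $(\partial_t-\Delta_t)\log\tr_{\omega_X}\omega_t\le C_0\tr_{\omega_t}\omega_X+C_0$. No such inequality holds in the Hermitian setting: the correct computation (see~\eqref{eq: delta}--\eqref{eq: error} in the paper) leaves an uncontrolled first-order torsion cross term of the form
\begin{equation*}
\frac{2}{(\tr_{\omega_X}\omega_t)^2}\,\mathrm{Re}\sum_{i,j,k}\tilde g^{i\bar i}T_{ij\bar j}\tilde g_{k\bar i\bar k},
\end{equation*}
and the Phong--Sturm device is not a modification of this computation but of the \emph{test function}: one must add a strictly concave function $\gamma(u)$ of $u=\f_t-\Psi_t-\kappa\psi^-+1$ (the paper takes $\gamma(u)=\tfrac{C_3+3}{\min(\kappa,1)}u+\ln u$), so that $(\partial_t-\Delta_t)(-\gamma(u))$ produces the term $\gamma''(u)|\partial u|^2_{\omega_t}\le 0$, which, after using $\partial G=0$ at the maximum point and Cauchy--Schwarz, exactly absorbs the torsion cross term at the price of a factor $\tfrac{\gamma'(u)}{-\gamma''(u)}\,\tfrac{\tr_{\omega_t}\omega_X}{(\tr_{\omega_X}\omega_t)^2}$. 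Your $G=(t-\varepsilon)\log\tr_{\omega_X}\omega_t-A\widetilde\f_t+C_1\psi^-$ is affine in $(\widetilde\f_t,\psi^-)$, so it generates no negative gradient-square term at all, and the argument cannot close as written.

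A secondary omission: even once the concave term is in place, the absorbed cross term comes back as $C\bigl(\tfrac{\gamma'(u)T}{-\gamma''(u)}+2\bigr)\tfrac{\tr_{\omega_t}\omega_X}{(\tr_{\omega_X}\omega_t)^2}$, whose numerator grows like $u^2$; since $\tr_{\omega_X}\omega_t$ has no uniform positive lower bound here (because $\psi^+$ is unbounded below), this cannot simply be swallowed into $C\tr_{\omega_t}\omega_X$. The paper handles it by a dichotomy at the maximum point: either $(\tr_{\omega_X}\omega_t)^2\le C(u^2+1)$, in which case the conclusion is immediate, or the reverse holds and the term is dominated by $\tr_{\omega_t}\omega_X$, which is then absorbed by the $-\kappa\gamma'(u)\tr_{\omega_t}\omega_X$ coming from $\theta_t+\dc\Psi_t\ge\kappa'\omega_X$. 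You will need both the concave correction and this dichotomy to complete the proof.
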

	
	\begin{proof} We follow the computations of~\cite{gill2011convergence,to2018regularizing} (which are due to the trick of Phong and Sturm~\cite{phong2010dirichlet}) with modification to deal with unbounded functions.
		The constant  $C$  denotes various uniform constants, which may differ throughout the argument.
		
		Consider 
		\begin{equation*}
		H:=(t-\varepsilon)\log\tr_{\omega_X}(\omega_t)-\gamma(u),\quad (t,x)\in [\varepsilon,T]\times X,
		\end{equation*} where $\gamma:\mathbb{R}\to\mathbb{R}$ is a smooth, concave, increasing function such that $\lim_{t\to+\infty}\gamma(t)=+\infty$, and 
		$$u(t,x):=\f_t(x)-\Psi_t(x)-\kappa\psi^{-}+1\geq 1,$$ as follows from Theorem~\ref{thm: C0estimate}, and $\psi_0, \psi^{-}\leq 0$.
		We will show that $H$ is uniformly bounded from above for an appropriate choice of $\gamma$. 
		
		We let $g$ denote the Riemann metric associated with $\omega_X$ and $\tilde{g}$ the one associated with $\omega_t:=\theta_t+\dc\f_t$. Since $H$ goes to $-\infty$ on the boundary of $X_0:=\{x\in X: \psi_0(x)>-\infty \}$, $H$ achieves its maximum on $[\varepsilon,T]\times X$ at some point $(t_0,x_0)\in (\varepsilon,T]\times X_0$. 
		At this maximum point, we use the following local coordinate systems due to Guan and Li~\cite[Lemma 2.1, (2.19)]{guan2010complex}:
		\begin{equation*}
		g_{i\bar{j}}=\delta_{ij},\; \frac{\partial g_{i\bar{i}}}{\partial z_j}=0\; \text{and}\; \tilde{g}_{i\bar{j}}\; \text{is diagonal}.
		\end{equation*}
		Following the computations in~\cite[Eq. (3.20)]{to2018regularizing}, we have
		\begin{equation}\label{eq: delta}
		\begin{split}
		\Delta_t\tr_{\omega_X}(\om_t)&\geq \sum_{i,j}\Tilde{g}^{i\bar{i}}\Tilde{g}^{j\bar{j}}\Tilde{g}_{i\bar{j}j}\Tilde{g}_{j\bar{i}\bar{j}}-\tr_{\omega_X}\textrm{Ric}({\omega}_t)
		-C_1\tr_{\omega_X}(\om_t)\tr_{\om_t}(\omega_X).
		\end{split}
		\end{equation}
		From standard arguments as in~\cite[Eq. (4.5)]{guedj2021quasi},  we obtain
		\begin{equation}\label{eq: error}
		\begin{split}
		\frac{{|\partial\tr_{\omega_X}(\om_t)|_{\tom_t}^2}}{(\tr_{\omega_X}(\om_t))^2}&\leq \frac{1}{\tr_{\omega_X}(\om_t)}\left(   \sum_{i,j}\Tilde{g}^{i\bar{i}}\Tilde{g}^{j\bar{j}}\Tilde{g}_{i\bar{j}j}\Tilde{g}_{j\bar{i}\bar{j}} \right)+C\frac{\tr_{\om_t}(\omega_X)}{(\tr_{\omega_X}(\om_t))^2}\\
		&\quad +\frac{2}{(\tr_{\omega_X}(\om_t))^2}\textrm{Re}\sum_{i,j,k} \Tilde{g}^{i\bar{i}} T_{ij\bar{j}}\Tilde{g}_{k\bar{i} \bar{k}},\end{split}
		\end{equation} where $T_{ij\bar{j}}:=\tilde{g}_{j\bar{j}i}-\tilde{g}_{i\Bar{j}j}$ is the torsion term corresponding to $\theta_{t}$ which is controlled: $|T_{ij\bar{j}}|\leq C$.
		Now at the point $(t_0,x_0)$, we have $\partial_{\Bar{i}}H=0$, hence
		\begin{equation*}
		(t-\varepsilon)\sum_k \Tilde{g}_{k\bar{k}\Bar{i}}=\tr_{\omega_X}(\om_t)\gamma'(u)u_{\bar{i}}.
		\end{equation*}
		Cauchy--Schwarz's inequality yields
		\begin{equation*}
		\left|\frac{2}{(\tr_{\omega_X}(\om_t))^2}\textrm{Re}\sum_{i,j,k} \Tilde{g}^{i\bar{i}} T_{ij\bar{j}}\Tilde{g}_{k\bar{k}\bar{i}} \right|\leq C\frac{\gamma'(u)(t_0-\varepsilon)}{-\gamma''(u)}\frac{\tr_{\omega_t}(\omega_X)}{(\tr_{\omega_X}(\omega_t))^2}+\frac{-\gamma''(u)}{t_0-\varepsilon}|\partial u|^2_{\omega_t},
		\end{equation*} hence
		\begin{equation*}
		\left|\frac{2}{(\tr_{\omega_X}(\om_t))^2}\textrm{Re}\sum_{i,j,k} \Tilde{g}^{i\bar{i}} T_{ij\bar{j}}\Tilde{g}_{k\bar{i}\bar{k}} \right|\leq C\left(\frac{\gamma'(u)T}{-\gamma''(u)}+1\right)\frac{\tr_{\omega_t}(\omega_X)}{(\tr_{\omega_X}(\omega_t))^2}+\frac{-\gamma''(u)}{t_0-\varepsilon}|\partial u|^2_{\omega_t},
		\end{equation*} using that $|\Tilde g_{k\bar{k}\bar{i}}-\Tilde g_{k\bar{i}\bar{k}}|\leq C$. From this, the inequality~\eqref{eq: error} becomes
		\begin{equation}\label{eq: grad}
		\begin{split}
		\frac{{|\partial\tr_{\omega_X}(\om_t)|_{\om_t}^2}}{(\tr_{\omega_X}(\om_t))^2}&\leq \frac{1}{\tr_{\omega_X}(\om_t)}\left(   \sum_{i,j}\Tilde{g}^{i\bar{i}}\Tilde{g}^{j\bar{j}}\Tilde{g}_{i\bar{j}j}\Tilde{g}_{j\bar{i}\bar{j}} \right)\\
		&\quad+C\left(\frac{\gamma'(u)T}{-\gamma''(u)}+2\right)\frac{\tr_{\omega_t}(\omega_X)}{(\tr_{\omega_X}(\omega_t))^2}+\frac{-\gamma''(u)}{t_0-\varepsilon}|\partial u|^2_{\omega_t}.\end{split}
		\end{equation}
		Set $\alpha:=\tr_{\omega_X}(\omega_t)$. We compute
		\begin{equation*}
		\begin{split}
		\dot{\alpha}&=\tr_{\omega_X}(\chi)-\tr_{\omega_X}\textrm{Ric}(\omega_t)-\tr_{\omega_X}\dc (\psi^+-\psi^{-})+ \tr_{\omega_X} (\textrm{Ric}(\omega_X))\\
		&\leq \tr_{\omega_X}(C_1\omega_X+\dc\psi^-) -\tr_{\omega_X}\textrm{Ric}(\omega_t)
		\end{split}
		\end{equation*} where we have used the fact that $\tr_{\omega_X}(\chi)$ is bounded from above, together with the trivial inequality $n\leq\tr_{\omega_X}(\omega_t)\tr_{\omega_t}(\omega_X)$.
		Combining this  with~\eqref{eq: delta} and~\eqref{eq: grad}, we infer that
		\begin{equation}
		\begin{split}
		\frac{\dot{\alpha}}{\alpha}-\Delta_{t}\log\alpha&=\frac{\dot{\alpha}}{\alpha}-\frac{\Delta_{t}\alpha}{\alpha}+\frac{|\partial\alpha|^2_{\omega_t}}{\alpha^2} \\
		&\leq \frac{\tr_{\omega_t}(C_1\omega_X+\dc\psi^{-})}{\alpha}+ C\left(\frac{\gamma'(u)T}{-\gamma''(u)}+2\right)\frac{\tr_{\omega_t}(\omega_X)}{\alpha^2}+\frac{-\gamma''(u)}{t_0-\varepsilon}|\partial u|^2_{\omega_t}.
		\end{split}
		\end{equation}
		From this, at the maximum point $(t_0,x_0)$,
		\begin{equation}
		\begin{split}\label{eq: maximum}
		0\leq \left(\frac{\partial}{\partial t}-\Delta_{t} \right)H&=\log\alpha+ (t-\varepsilon)\left( \frac{\dot{\alpha}}{\alpha}-\Delta_{t}\log\alpha \right) \\
		&\quad -\gamma'(u)\dot{u} +\gamma'(u)\Delta_t u+\gamma''(u)|\partial u|^2_{\omega_t}\\
		&\leq \log\alpha+ \frac{C_3\tr_{\omega_t}(\omega_X+\dc\psi^{-})}{\alpha}+ C_4\left(\frac{\gamma'(u)T}{-\gamma''(u)}+2\right)\frac{\tr_{\omega_t}(\omega_X)}{\alpha^2}\\
		&\quad-\gamma'(u)\dot{\f}_t+\gamma'(u)\dot{\Psi}_t  +  \gamma'(u)\Delta_{\omega_t}(\f_t-\Psi_t-\kappa\psi^{-})
		,
		\end{split} 
		\end{equation} with $C_3$, $C_4>0$ under control.
		Moreover, since $\theta_t\geq \left(1-\frac{at}{S}\right)\theta$ hence 
		$$\theta_t+\dc\Psi_t\geq \left(1-\frac{bt}{S}\right)2\kappa\omega_X.$$ Thus we obtain
		\begin{equation}\label{eq: dpsi}
		\Delta_t(\f_t-\Psi_t)\leq n-\kappa\tr_{\omega_t}(\omega_X).
		\end{equation}
		Substituting~\eqref{eq: dpsi} into~\eqref{eq: maximum}, we obtain
		\begin{equation*}
		\begin{split}
		0&\leq\log\alpha +\frac{C_3\tr_{\omega_t}(\omega_X+\dc\psi^{-})}{\alpha}- \gamma'(u)(n-\kappa\tr_{\omega_t}(\omega_X+\dc\psi^{-}))
		\\
		&\quad-\gamma'(u)\dot{\f}_t-\gamma'(u)\frac{b\psi_0}{S}+ C_4\left(\frac{\gamma'(u)T}{-\gamma''(u)}+2\right)\frac{\tr_{\omega_t}(\omega_X)}{(\tr_{\omega_X}(\omega_t))^2}+C_5.
		\end{split}
		\end{equation*}
		We now choose the function $\gamma$ to obtain a simplified formulation. We set
		\begin{equation*}
		\gamma(u):=\frac{C_3+3}{\min(\kappa,1)} u+\log (u).
		\end{equation*} 
		Since $u\geq 1$ we have
		\begin{equation*}
		\frac{C_3+3}{\min(\kappa,1)} \leq\gamma'(u)\leq 1+\frac{C_3+3}{\min(\kappa,1)},\qquad \frac{\gamma'(u)T}{-\gamma''(u)}+2\leq C_5 u^2.
		\end{equation*}
		Using $\tr_{\omega_X}(\omega_X+\dc\psi^{-})\leq \tr_{\omega_t}(\omega_X+\dc\psi^{-})\tr_{\omega_X}(\omega_t)$ we obtain
		\begin{equation}\label{eq: max}\begin{split}
		0&\leq \log\alpha- \gamma'(u)\dot{\f}_t-\gamma'(u)\frac{b\psi_0}{S}-3\tr_{\omega_t}(\omega_X)
		+C_6(u^2+1)\frac{\tr_{\omega_t}(\omega_X)}{\alpha^2}.
		\end{split}
		\end{equation}
		If at the point $(t_0,x_0)$, we have $\alpha^2\leq C_6(u^2+1)$ then
		\begin{equation*}
		H(t_0,x_0)\leq T\log \sqrt{C_6(u^2+1)}-\gamma(u)\leq C_7,
		\end{equation*} we are done. Otherwise, we assume that, at $(t_0,x_0)$, $\alpha^2\geq C_6(u^2+1)$. Applying Lemma~\ref{lem34}, we obtain
		\[\log\alpha=\log\tr_{\omega_X}(\omega_t)\leq (n-1)\log \tr_{\omega_t}(\omega_X)+\log n+\dot{\f}_t-\psi^{-} \] using that $\sup_X\psi^{+}=0$. 
		Plugging this into~\eqref{eq: max}, we obtain
		\begin{equation*}
		0\leq  C_5+(n-1)\log \tr_{\omega_t}(\omega_X)-2\tr_{\omega_t}(\omega_X)- (\gamma'(u)-1)\dot{\f}_t-\gamma'(u)\frac{b\psi_0}{S}-\psi^{-},
		\end{equation*}
		or equivalently,
		\begin{equation}\label{eq: trace}
		\tr_{\omega_t}(\omega_X)\leq C_8-(\gamma'(u)-1)\dot{\f}_t-\gamma'(u)\frac{b\psi_0}{S}-\psi^{-}
		\end{equation} since $(n-1)\log y -2y\leq -y+O(1)$ for $y>0$.
		In particular, we have
		\begin{equation}\label{eq: dotf}
		\dot{\f}_t\leq \frac{C_5}{\gamma'(u)-1}-\frac{\gamma'(u)}{\gamma'(u)-1}\frac{b\psi_0}{S}\leq \frac{C_5}{A-1}-\frac{bA\psi_0}{(A-1)S}-\frac{\psi^{-}}{A-1}
		\end{equation} at $(t_0,x_0)$ since $\tr_{\omega_t}(\omega_X)\geq 0$ and $A\leq \gamma'(u)\leq A+1$ with $A=:\frac{C_3+3}{\min(\kappa,1)}$. 
		It follows from Lemma~\ref{lem34} that
		\[\tr_{\omega_t}(\omega_X)\geq n\exp\left(\frac{-\dot{\f}_t+\psi^{-}}{n}\right) .\]
		Plugging this into~\eqref{eq: trace}, we obtain
		\begin{equation*}
		\tr_{\omega_t}(\omega_X)\leq C_9-\gamma'(u)\frac{b\psi_0}{S}-\gamma'(u)\psi^{-}\leq C_9-\frac{(A+1)b\psi_0}{S}-(A+1)\psi^{-}
		\end{equation*} with $C_9>0$ under control,
		since $e^y- Dy \geq -C$ for $y\in \mathbb{R}$, $D>0$ we apply with $y=\frac{-\dot{\f}_t+\psi^{-}}{n}$. Again Lemma~\ref{lem34} yields
		\begin{equation*}
		\log\alpha\leq (n-1)\log\left(C_9-\frac{b(A+1)\psi_0}{S} -(A+1)\psi^{-}\right)+\log n+\dot{\f}_t-\psi^{-}.
		\end{equation*}
		Combining this together with~\eqref{eq: dotf},
		we have at $(t_0,x_0)$
		\begin{equation*}
		\begin{split}
		H&\leq C_{10}-A\left[\f_t-\left( 1-\frac{bt}{S}-\frac{b(t-\varepsilon)}{(A-1)S} \right)\psi_0 \right]+\left(A\kappa-1-\frac{1}{A-1} \right)\psi^{-}\\
		&\quad+(t-\varepsilon) (n-1)\log\left(C_9-\frac{b(A+1)\psi_0}{S}-(A+1)\psi^{-} \right).
		\end{split}
		\end{equation*}
		Up to increasing $A>0$ if necessary, so that
		\[\eta:=\frac{b\varepsilon}{T}-\frac{b\varepsilon}{S}-\frac{bT}{(A-1)S}>0,\]
		and since $\psi_0\leq 0$, we obtain, at $(t_0,x_0)$,
		\begin{equation*}
		\begin{split}
		H&\leq C_{10}-A\left[\f_t-\left(1-\frac{bt}{T}\right)\psi_0\right]+A\eta\psi_0+A\kappa/2\psi^{-}\\
		&\quad+(t-\varepsilon) (n-1)\log\left(C_9-\frac{b(A+1)\psi_0}{S}-(A+1)\psi^{-} \right).
		\end{split}   
		\end{equation*} The second term is uniformly bounded from above by Theorem~\ref{thm: C0estimate}. Since $-\gamma y+\log y$ is bounded from above for $y>0$, we conclude that $H$ achieves a uniform bound at $(t_0,x_0)$. This completes the proof.
		
	\end{proof}
	
	\subsection{Estimates near the zero time} Recall that there exists a $\theta$-psh function $\rho$ with analytic singularities such that $\sup_X\rho=0$ and $$\theta+\dc\rho\geq 3\delta_0\omega_X$$ for some $\delta_0>0$. The main result of Tosatti--Weinkove ~\cite{tosatti2010complex} ensures that there exists a constant $c_1$ and  $\phi_1\in\PSH(X,\theta)\cap \mathcal{C}^\infty(X)$ such that 
	\[(\theta+\dc\phi_1)^n=e^{c_1}d\mu, \quad\sup_X\phi_1=0. \]
	\begin{proposition}\label{prop: zero1}
		Assume that $\psi_1$, $\psi_2$ are two smooth $\omega_X$-psh functions satisfying  \[\dot{\f}_0\geq C_1\psi_1,\quad \f_0\geq \frac{1}{2}(\rho+\delta_0\psi_2) \]
		for some constants $C_1>0$. Fix $T_1\in (0,T_{\max})$ such that $\theta_{t}> \frac{1}{2}\theta$ for all $t\in[0,T_1]$. Then there exists a uniform constant $C_2>0$ only depending on $C_1$, $\delta_0$, $T_1$ and $\sup_X|\phi_1|$ such that \[\dot{\f}_t\geq C_2(\rho+\delta_0
		\psi_2+1)+C_1\psi_1,\; \forall\, t\in [0,T_1]. \]  
	\end{proposition}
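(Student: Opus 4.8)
The natural route is a parabolic maximum principle applied to a well-chosen auxiliary function, in the spirit of Proposition~\ref{prop: 1dot} and of~\cite[Prop.~3.5]{di2017uniqueness}. Normalise $\sup_X\psi_1=\sup_X\psi_2=\sup_X\rho=0$, so that $\psi_1,\psi_2,\rho\le 0$, and recall that $\phi_1$ (the bounded $\theta$-psh function with $(\theta+\dc\phi_1)^n=e^{c_1}\mu$, $\sup_X\phi_1=0$) is smooth on $\Omega=\{\rho>-\infty\}$ since $\mu$ has smooth positive density. Put $\Phi:=\tfrac12(\rho+\delta_0\psi_2)$; from $\theta+\dc\rho\ge 3\delta_0\omega_X$, $\dc\psi_2\ge-\omega_X$ and $\theta_t\ge\tfrac12\theta$ on $[0,T_1]$ one gets $\theta_t+\dc\Phi\ge\delta_0\omega_X$ on that interval. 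The plan is to study, for a large constant $A$ to be fixed,
\[
  G(t,x):=\dot{\f}_t(x)+A\bigl(\f_t(x)-\Phi(x)\bigr)-C_1\psi_1(x)-\phi_1(x),\qquad (t,x)\in[0,T_1]\times X.
\]

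Since $\f_t,\dot{\f}_t$ are smooth while $-A\Phi\to+\infty$ along $\{\rho=-\infty\}$ (and $\psi_1,\phi_1$ are bounded), $G$ is lower semicontinuous on $[0,T_1]\times X$, equal to $+\infty$ on $[0,T_1]\times\{\rho=-\infty\}$, so it attains its minimum at some $(t_0,x_0)$ with $x_0\in\Omega$, where $G$ is smooth. Using $\partial_t\dot{\f}_t=\Delta_t\dot{\f}_t+\tr_{\omega_t}(\chi)$ and $\Delta_t\f_t=n-\tr_{\omega_t}(\theta_t)$, I would compute
\[
  \Bigl(\frac{\partial}{\partial t}-\Delta_t\Bigr)G=A\dot{\f}_t-An+\tr_{\omega_t}\!\bigl(A(\theta_t+\dc\Phi)+\chi+C_1\dc\psi_1+\dc\phi_1\bigr).
\]
Writing $A(\theta_t+\dc\Phi)+\chi+C_1\dc\psi_1+\dc\phi_1=(\theta+\dc\phi_1)+\bigl[A(\theta_t+\dc\Phi)-\theta+\chi+C_1\dc\psi_1\bigr]$ and using $\theta_t+\dc\Phi\ge\delta_0\omega_X$ together with the uniform lower bounds $\theta,-\chi,-C_1\dc\psi_1\ge-C\omega_X$, the bracketed form is $\ge 0$ once $A\delta_0$ exceeds those constants; hence, by Lemma~\ref{lem34} applied to $\theta+\dc\phi_1$ and $\omega_t$ (recalling $\omega_t^n=e^{\dot{\f}_t}\mu$ and $(\theta+\dc\phi_1)^n=e^{c_1}\mu$),
\[
  \Bigl(\frac{\partial}{\partial t}-\Delta_t\Bigr)G\ \ge\ A\dot{\f}_t-An+\tr_{\omega_t}(\theta+\dc\phi_1)\ \ge\ A\dot{\f}_t-An+c'\,e^{-\dot{\f}_t/n},\qquad c':=ne^{c_1/n}>0.
\]

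Then I would split according to $(t_0,x_0)$. If $t_0=0$, the hypotheses $\dot{\f}_0\ge C_1\psi_1$, $\f_0\ge\Phi$ and $\phi_1\le 0$ give $G(0,x_0)\ge 0$. If $t_0>0$, then $(\partial_t-\Delta_t)G\le 0$ at the minimum, so $A\dot{\f}_{t_0}(x_0)+c'e^{-\dot{\f}_{t_0}(x_0)/n}\le An$; as $s\mapsto As+c'e^{-s/n}$ is convex and tends to $+\infty$ as $s\to\pm\infty$, this forces $\dot{\f}_{t_0}(x_0)\ge -C_4$ for some $C_4=C_4(n,A,c')$ (equivalently, via $As+c'e^{-s/n}\ge\tfrac{c'}{2}e^{-s/n}-C$, exactly the elementary inequality used in the proof of Proposition~\ref{prop: 1dot}). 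Combining this with the lower bound $\f_t-\Phi\ge -C$ on $[0,T_1]$ — which follows from $\f_0\ge\Phi$ together with a bound $\f_t\ge\f_0-C$ as in Proposition~\ref{prop: lowbound} (run, if necessary, with a smaller auxiliary parameter so that its range covers $[0,T_1]$, which is admissible because $p^*/2c(\f_0)<\tmax$) — and with $\psi_1,\phi_1\le 0$, we get $G(t_0,x_0)\ge -C_5$. In all cases $G\ge -C_5$ on $[0,T_1]\times X$, and therefore
\[
  \dot{\f}_t\ \ge\ -C_5+A\Phi-A\f_t+C_1\psi_1+\phi_1\ \ge\ \tfrac{A}{2}(\rho+\delta_0\psi_2)+C_1\psi_1-C_6,
\]
using $\f_t\le C$ (Proposition~\ref{prop: C0upper}) and $\phi_1\ge\inf_X\phi_1$; since $\rho+\delta_0\psi_2\le 0$, a routine rewriting of the constant yields the asserted estimate $\dot{\f}_t\ge C_2(\rho+\delta_0\psi_2+1)+C_1\psi_1$, with $C_2$ depending only on $C_1,\delta_0,T_1,\sup_X|\phi_1|$.

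The step I expect to be the real obstacle — as opposed to the (routine but lengthy) computation of $(\partial_t-\Delta_t)G$ and the calibration of $A$ — is securing the bound $\f_t-\Phi\ge -C$ on the whole of $[0,T_1]$, uniformly in the regularising index: this is precisely where the hypothesis $\f_0\ge\tfrac12(\rho+\delta_0\psi_2)$ is used (through Proposition~\ref{prop: lowbound}), and without it the term $-A\f_t$ in the last display could not be traded for a bounded quantity and the argument would collapse. A secondary point demanding care is that the minimum of $G$ is attained where $\phi_1$ (hence $G$) is smooth, which is guaranteed by the blow-up of $-A\Phi$ on the exceptional locus.
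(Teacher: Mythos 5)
Your proposal is correct and follows essentially the same route as the paper: the same auxiliary function $\dot{\f}_t - C_1\psi_1 + A(\f_t - \tfrac12(\rho+\delta_0\psi_2)) - \phi_1$, the same minimum-principle computation with Lemma~\ref{lem34} and the elementary inequality to bound $\dot{\f}_{t_0}(x_0)$ from below, and the same appeal to Proposition~\ref{prop: lowbound} to control $\f_t - \tfrac12(\rho+\delta_0\psi_2)$ at the minimum point. Your treatment is in fact slightly cleaner in making explicit that the minimum is attained in $\{\rho>-\infty\}$ and in organizing the trace term around $\theta+\dc\phi_1$.
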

	
	\begin{proof}The proof is identical to that of Proposition~\ref{prop: 1dot}. We consider
		\[H(t,x):=\dot{\f}_t-C_1\psi_1+A\left(\f_t-\frac{1}{2}(\rho+\delta_0\psi_2)\right)-\phi_1,\]
		for $A>0$ to be chosen later. We observe that $H$ achieves its minimum at some point $(t_0,x_0)\in [0,T_1]\times X$. If $t_0=0$, we are done by assumptions.
		Otherwise, by the minimum principle, we have at $(t_0,x_0)$,
		\[0\geq \left( \frac{\partial}{\partial t}-\Delta_t\right)H\geq -An+A\dot{\f}_t +\left( -C_1+A\delta_0\right)\tr_{\omega_t}(\omega_X)+\tr_{\omega_t}(\dc\phi_1)\]
		using $\theta_t+\dc\frac{1}{2}(\rho+\delta_0\psi_2)\geq \delta_0\omega_X$. Now, we choose $A=\delta_0(C_1+1)$, thus
		\[\tr_{\omega_t}(\omega_X+\dc\phi_1)\geq n\left( \frac{(\theta+\dc\phi_1)^n}{\omega_t^n}\right)^{1/n} =n e^{\frac{-\dot{\f}_t+c_1}{n}}\]
		using  Lemma~\ref{lem34}. Together with the inequality $e^y\geq By-C_B$, we obtain a uniform lower bound for $\dot{\f}_t$ at $(t_0,x_0)$. On the other hand, by Proposition~\ref{prop: lowbound} we see that $\f_t\geq \f_0-c(t)$, so $$\f_t\geq \frac{1}{2}(\rho+\delta_0\psi_2)-c(t),$$ where $c(t)\to 0$ as $t\to 0$. The lower bound for $H(t_0,x_0)$ thus follows, finishing the proof.
	\end{proof}

	\begin{proposition}\label{prop: zero2}
		Assume that $\psi_1$, $\psi_2$ are two smooth $\omega_X$-psh functions satisfying  \[\Delta_{\omega_X}\f_0\leq e^{-C_1\psi_1},\quad \f_0\geq \frac{1}{2}(\rho+\delta_0\psi_2)\]
		for some constants $C_1>0$. Fix $T_1\in (0,T_{\max})$ such that $\theta_{t}> \frac{1}{2}\theta$ for all $t\in[0,T_1]$. Then there exist uniform constants $C_2>0$, $C_3>0$ only depending on $C_1$, $\delta_0$ and $T_1$ such that
		\begin{equation*}
		\tr_{\omega_X}(\omega_t)\leq C_3e^{-C_1\psi_1-C_2(\rho+\delta_0\psi_2+\delta_0\psi^{-}}),\; \forall\, t\in [0,T_1].
		\end{equation*}
	\end{proposition}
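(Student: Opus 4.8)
The plan is to run a Hermitian parabolic Laplacian ($C^{2}$) estimate, along the lines of Theorem~\ref{thm: C2estimate} and \cite[Lemma~6.4]{to2018regularizing} (ultimately the trick of Phong--Sturm \cite{phong2010dirichlet}), the simplification being that the hypothesis $\Delta_{\omega_X}\f_0\leq e^{-C_1\psi_1}$ lets us anchor the maximum principle at $t=0$, so no degenerating factor $t-\varepsilon$ is needed. Note first that, for $t\in[0,T_1]$,
\[\theta_t+\dc\bigl(\tfrac12(\rho+\delta_0\psi_2)\bigr)\geq\delta_0\omega_X,\]
since $\theta_t\geq\tfrac12\theta$, $\theta+\dc\rho\geq 3\delta_0\omega_X$ and $\dc\psi_2\geq-\omega_X$. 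By Proposition~\ref{prop: lowbound} together with $\f_0\geq\tfrac12(\rho+\delta_0\psi_2)$ one has $\f_t\geq\tfrac12(\rho+\delta_0\psi_2)-C$ on $[0,T_1]\times X$; hence, after adding a fixed large constant $\lambda$,
\[u:=\f_t-\tfrac12(\rho+\delta_0\psi_2)-\psi^{-}+\lambda\]
satisfies $u\geq 1$ on $[0,T_1]\times X$ (recall $\psi^{-}\leq 0$). I then study
\[H:=\log\tr_{\omega_X}(\omega_t)+C_1\psi_1-\gamma(u),\qquad\gamma(s):=As+\log s,\]
with $A>0$ large, to be fixed. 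Since $\psi_1,\psi_2,\psi^{\pm}$ are smooth and $\rho$ is smooth off the analytic set $\{\rho=-\infty\}$, on approaching that set $u\to+\infty$ and hence $H\to-\infty$; so $H$ attains its maximum at some $(t_0,x_0)\in[0,T_1]\times(X\setminus\{\rho=-\infty\})$.

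If $t_0=0$, then $\tr_{\omega_X}(\omega_0)\leq\tr_{\omega_X}\theta+\Delta_{\omega_X}\f_0\leq C+e^{-C_1\psi_1}$, so $\log\tr_{\omega_X}(\omega_0)+C_1\psi_1\leq\log(Ce^{C_1\psi_1}+1)\leq C$ (using $\psi_1\leq 0$), while $\gamma(u(0,x_0))\geq\gamma(1)=A$ by monotonicity of $\gamma$; therefore $H(t_0,x_0)\leq C$. If $t_0>0$, the maximum principle gives $0\leq(\partial_t-\Delta_t)H$ at $(t_0,x_0)$, and I expand this using: the torsion-corrected Aubin--Yau-type inequality for $(\partial_t-\Delta_t)\log\tr_{\omega_X}(\omega_t)$ along \eqref{pcmae2}, exactly as in \eqref{eq: delta}--\eqref{eq: grad}; the identity $\dot\f_t=\log(\omega_t^n/\mu)$; Lemma~\ref{lem34}, which yields $\tr_{\omega_t}(\omega_X)\geq c_n e^{(-\dot\f_t+\psi^{-})/n}$ (as $\psi^{+}\leq 0$) and $\log\tr_{\omega_X}(\omega_t)\leq(n-1)\log\tr_{\omega_t}(\omega_X)+\dot\f_t-\psi^{-}+C$; the barrier bounds $\Delta_t\bigl(\f_t-\tfrac12(\rho+\delta_0\psi_2)\bigr)\leq n-\delta_0\tr_{\omega_t}(\omega_X)$ and $\Delta_t(-\psi^{-})\leq\tr_{\omega_t}(\omega_X)$; and $C_1\Delta_t\psi_1\leq C\tr_{\omega_t}(\omega_X)$, valid because $\omega_X+\dc\psi_1$ is a fixed smooth semipositive form, hence $\leq C\omega_X$. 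Choosing $A$ large, in terms of $C_1$, $\delta_0$, $T_1$ and the fixed background data, to absorb the curvature and torsion errors and all positive multiples of $\tr_{\omega_t}(\omega_X)$, one obtains---exactly as in the last part of the proof of Theorem~\ref{thm: C2estimate}, using $\gamma s-\log s\geq-C_\gamma$ and $be^{s}-Bs\geq-C$---first the bounds $\tr_{\omega_t}(\omega_X)\leq C-C'(\rho+\delta_0\psi_2)-C'\psi^{-}$ and $\dot\f_t\leq C-C'(\rho+\delta_0\psi_2)-C'\psi^{-}$ at $(t_0,x_0)$, and then $H(t_0,x_0)\leq C$.

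In all cases $H\leq C$ on $[0,T_1]\times X$, i.e.\ $\log\tr_{\omega_X}(\omega_t)\leq-C_1\psi_1+\gamma(u)\leq-C_1\psi_1+(A+1)u$. Since $\f_t\leq C$ on $[0,T_1]$ by (the argument of) Proposition~\ref{prop: C0upper}, $u\leq C-\tfrac12(\rho+\delta_0\psi_2)-\psi^{-}$; absorbing the bounded term $\psi_2$ and setting $C_2:=\max\{(A+1)/2,\,(A+1)/\delta_0\}$ (and recalling $\rho\leq 0$, $\psi^{-}\leq 0$) gives
\[\log\tr_{\omega_X}(\omega_t)\leq C-C_1\psi_1-C_2(\rho+\delta_0\psi_2+\delta_0\psi^{-}),\]
and exponentiating yields the claim with $C_3:=e^{C}$.

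The step I expect to be the main obstacle is the interior case $t_0>0$: carrying out the Hermitian second-order computation cleanly, in particular using Cauchy--Schwarz (the Phong--Sturm trick) to absorb the torsion-induced gradient term into $\gamma''(u)|\partial u|^{2}_{\omega_t}$, and then tuning $\gamma$ (equivalently $A$) so that a strictly negative multiple of $\tr_{\omega_t}(\omega_X)$ survives and dominates every error term, including the new weight $C_1\psi_1$. Everything else is bookkeeping, and the sole role of the hypothesis $\Delta_{\omega_X}\f_0\leq e^{-C_1\psi_1}$ is to render the boundary case $t_0=0$ trivial.
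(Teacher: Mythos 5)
Your proposal follows the paper's own proof essentially verbatim: the paper also sets $H=\log\tr_{\omega_X}(\omega_t)+C_1\psi_1-\gamma(u)$ with $u=\f_t-\tfrac12(\rho+\delta_0\psi_2)-\delta_0\psi^{-}+\mathrm{const}$ and $\gamma(s)=As+\log s$, handles $t_0=0$ via the hypothesis on $\Delta_{\omega_X}\f_0$, and for $t_0>0$ simply reruns the computation of Theorem~\ref{thm: C2estimate}. So the approach is the same (and your treatment of the $t_0=0$ case and of the final conversion from $H\leq C$ to the stated bound is, if anything, more explicit than the paper's).

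One coefficient in your auxiliary function is mis-tuned, though, and it matters. You take $u=\f_t-\tfrac12(\rho+\delta_0\psi_2)-\psi^{-}+\lambda$, i.e.\ $\psi^{-}$ enters with coefficient $1$, and you bound $\Delta_t(-\psi^{-})\leq\tr_{\omega_t}(\omega_X)$ separately from the barrier bound $\Delta_t\bigl(\f_t-\tfrac12(\rho+\delta_0\psi_2)\bigr)\leq n-\delta_0\tr_{\omega_t}(\omega_X)$. In $(\partial_t-\Delta_t)H$ these appear multiplied by $\gamma'(u)\approx A$, so the net contribution is $\gamma'(u)\bigl(n+(1-\delta_0)\tr_{\omega_t}(\omega_X)\bigr)$: when $\delta_0<1$ this is a \emph{positive} multiple of $\tr_{\omega_t}(\omega_X)$ that grows with $A$, and no amount of "choosing $A$ large" can absorb it --- increasing $A$ makes it worse. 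The fix is to put $\psi^{-}$ into $u$ with coefficient $\delta_0$ (more generally any $\kappa\leq\delta_0$), so that
\[
\Delta_t u\;=\;n-\tr_{\omega_t}\Bigl(\theta_t+\dc\tfrac12(\rho+\delta_0\psi_2)+\delta_0\dc\psi^{-}\Bigr)\;\leq\;n-\delta_0\,\tr_{\omega_t}(\omega_X+\dc\psi^{-}),
\]
with $\omega_X+\dc\psi^{-}\geq 0$; the resulting $-\delta_0\gamma'(u)\tr_{\omega_t}(\omega_X+\dc\psi^{-})$ is then exactly what absorbs, for large $A$, the $O(1)\cdot\tr_{\omega_t}(\omega_X+\dc\psi^{-})$ term produced by $\partial_t\log\tr_{\omega_X}(\omega_t)$ (which contains $-\tr_{\omega_X}\dc(\psi^{+}-\psi^{-})$) as well as the term $C_1\Delta_t\psi_1$. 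This is precisely the paper's choice (its "$+\delta_0\psi^{-}$" in the definition of $u$ is a sign typo for $-\delta_0\psi^{-}$, as one sees from Theorem~\ref{thm: C2estimate} and from the final estimate, which carries the weight $\delta_0\psi^{-}$). With that single correction your argument goes through and your concluding bookkeeping, which already anticipates the factor $\delta_0\psi^{-}$, is unchanged.
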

	\begin{proof} 
		Consider the function
		\begin{equation*}
		H(t,\cdot)=\log\tr_{\omega_X}(\omega_t)+C_1\psi_1-\gamma (u)
		\end{equation*} where $\gamma:\mathbb{R}\to\mathbb{R}$ is a smooth concave increasing function such that $\lim_{t\to+\infty}\gamma(t)=+\infty$, and 
		$$u(t,x):=\f_t(x)-\frac{1}{2}(\rho(x)+ \delta_0\psi_2(x))+\delta_0\psi^{-}(x)+1.$$
		We suppose that $H$ achieves its maximum at a point $(t_0,x_0)\in [0,T_1]\times X$, with $x_0\in  \{\rho>-\infty\}$. If $t_0=0$, then $H(0,\cdot)\leq \log n-\gamma(1)$. Otherwise, assume $t_0>0$. We proceed by computing at this point.
		By the maximum principle and the arguments in Theorem~\ref{thm: C2estimate}, we have
		\begin{equation}\begin{split}
		0\leq \left(\frac{\partial}{\partial t}-\Delta_t\right)H&\leq \frac{C\tr_{\omega_t}(\omega_X+\dc\psi^{-})}{\tr_{\omega_X}(\omega_t)}- \gamma'(u)(n-\delta_0\tr_{\omega_t}(\omega_X+\dc\psi^{-}))+C
		\\
		&\quad-C_1\tr_{\omega_t}(\dc\psi_1)-\gamma'(u)\dot{\f}_t+ C\left(\frac{\gamma'(u)}{-\gamma''(u)}+2\right)\frac{\tr_{\omega_t}(\omega_X)}{(\tr_{\omega_X}(\omega_t))^2}.
		\end{split}
		\end{equation}Here, we use $\theta_t+\dc\frac{1}{2}(\rho+\delta_0\psi_2)\geq \delta_0\omega_X$. We set
		\begin{equation*}
		\gamma(u):=\frac{C+C_1+3}{\min(\kappa,1)} u+\ln (u).
		\end{equation*} We then proceed in the same way as in the proof of Theorem~\ref{thm: C2estimate} to obtain the uniform upper bound for $H(t_0,x_0)$. This finishes the proof.
	\end{proof}

	\section{Degenerate Monge--Amp\`ere flows}\label{sect: existence}
	\subsection{Proof of Theorem~\ref{thmB}}
	By Demailly's regularization theorem (Theorem~\ref{thm: dem}), we can find two sequences $\psi_j^\pm\in\mathcal{C}^\infty(X)$ such that
	\begin{itemize}
		\item $\psi_j^{\pm}$ decreases pointwise to $\psi^\pm$ on $X$ and the convergence is in $\mathcal{C}^\infty_{\rm loc}(U)$;
		\item $\dc\psi^\pm\geq -\omega_X$.
	\end{itemize} We note that $|\sup_X\psi^\pm_j|$ is uniformly bounded, and for all $j$, \[ \|e^{-\psi^-_j}\|_{L^p}\leq \|e^{-\psi^-}\|_{L^p}.\]
	Thanks to Demailly's regularization theorem again, we can find a smooth sequence $(\f_{0,j})$ of strictly $(\theta+2^{-j}\omega_X)$-psh functions decreasing towards $\f_0$. We set $\theta_{t,j}=\theta_t+2^{-j}\omega_X$ and $\mu_j=e^{\psi^+_j-\psi^-_j}$.
	It follows from~\cite[Theorem 1.2]{tosatti2015evolution} (see also~\cite{to2018regularizing}) that there exists a unique function $\f_j\in \mathcal{C}^\infty([0,T)\times X)$ such that 
	\begin{equation}\label{eq: pcmae-smooth}
	\begin{cases}
	\dfrac{\partial\f_{t,j}}{\partial t}=\log\left[ \dfrac{(\theta_{t,j}+\dc\f_{t,j})^n}{\mu_j}\right]\\
	\f_j|_{t=0}=\f_{0,j}.
	\end{cases}
	\end{equation} 
	It follows from the maximum principle that the sequence $\f_{t,j}$ decreases with respect to $j$. Moreover, Proposition~\ref{prop: C0upper} ensures that $\sup_X\f_{t,j}$ is uniformly bounded from above. By Proposition~\ref{prop: lowbound}, as $j\to+\infty$, the family $\f_{t,j}$ decreases to $\f_t$ which is a well-defined $\theta_t$-psh function on $X$. Following the same arguments as in~\cite[Section 4.1]{to2018regularizing}, we conclude that $\f_t\to\f_0$ in $L^1(X)$ as $t\to 0^+$.
	
	Next, we study the partial regularity of $\f_t$ for small $t$. We fix $\varepsilon_0>0$ and $\varepsilon>p^*\varepsilon_0$.  Let $T$ and $S$ be as defined in Section~\ref{sect: notation}. Let $\rho$ be a $\theta$-psh function with analytic singularities along $D$ such that $\theta+\dc\rho$ dominates a Hermitian form, where $D:=\{\rho=-\infty\}$.
	By Lemma~\ref{lem: equisingular}, there is a function $\psi_0\in\PSH(X,\theta)\cap \mathcal{C}^\infty(X\setminus (D\cup E_c(\f_0)))$, where $c=c(\varepsilon_0)>0$, such that 
	\begin{equation*}
	\int_X e^{\frac{2(\psi_0-\f_0)}{\varepsilon_0}}dV_X<+\infty.
	\end{equation*}
	We assume w.l.o.g that $\psi_0\leq 0$. Since $\frac{p^*}{2c(\f_0)}<T$ and $\psi_0$ is less singular than $\f_0$, we also have
	\begin{equation*}
	\int_X e^{\frac{-p^*\psi_0}{T}}dV_X <+\infty.
	\end{equation*} We note that since $\f_0$ is a decreasing limit of a smooth sequence $\f_{0,j}$, the corresponding constants for $\f_{0,j}$ are uniformly bounded (in $j$), and we can pass to the limit as $j\to+\infty$.

	Recall that $\psi^\pm$ are smooth (merely locally bounded) in a Zariski open set $U\subset X\backslash D$.
	We will show that $\f_t$ is smooth on $U\setminus  E_c(\f_0)$ for each $t>\varepsilon$. Let $K$ be an arbitrarily compact subset of $U\setminus  E_c(\f_0)$. It follows from Proposition~\ref{prop: C0upper}, Theorem~\ref{thm: C0estimate}, and the remark above that
	\begin{equation*}
	\sup_{[\varepsilon,T]\times K}|\f_j|\leq C(\varepsilon,T,K).
	\end{equation*}
	Next, Proposition~\ref{prop: 1dot} yields 
	\begin{equation*}
	\sup_{[\varepsilon,T]\times K}|\dot{\f}_j|\leq C(\varepsilon,T,K).
	\end{equation*} Moreover, by Theorem~\ref{thm: C2estimate}, we also obtain a uniform bound for $\Delta\f_t^j$:
	\begin{equation*}
	\sup_{[\varepsilon,T]\times K}|\Delta\f_j|\leq C(\varepsilon,T,K).
	\end{equation*} Using the complex parabolic Evans–Krylov–Trudinger theory, together with parabolic Schauder's estimates (see, e.g., \cite[Theorem 4.1.4]{boucksom2013regularizing}), we derive higher-order estimates for $\f_j$ on $[\varepsilon, T]\times K$ :
	\begin{equation*}
	\|\f_j\|_{\mathcal{C}^k([\varepsilon,T]\times K)}\leq C(\varepsilon,T,K,k).
	\end{equation*}
	This ensures that $\f_j$ is relatively compact in $\mathcal{C}^\infty([\varepsilon,T]\times (U\setminus  E_c(\f_0))$ since $K$ was taken arbitrarily. 
	By passing to the limit in~\eqref{eq: pcmae-smooth}, we deduce that $\f$ satisfies~\eqref{pcmae} in the classical sense on $[\varepsilon,T]\times \Omega_\varepsilon$ with $\Omega_\varepsilon=U\setminus  E_{c(\varepsilon)}(\f_0)$.

	\subsection{Uniqueness}\label{sect: unique} 
	We now follow the argument in~\cite{guedj2017regularizing} to prove that the solution $\f$ to the equation~\eqref{pcmae} constructed in the previous part is the unique maximal solution in the following sense:
	\begin{proposition}
		Let $\psi_t$ be a weak solution to the equation~\eqref{pcmae} with initial data $\f_0$. Then $\psi_t\leq \f_t$ for all $t\in (0,\tmax)$. 
	\end{proposition}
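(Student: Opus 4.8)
The plan is to compare an arbitrary weak solution $\psi_t$ with the solution $\f_t$ obtained above as a decreasing limit of the smooth approximants $\f_{t,j}$ solving~\eqref{eq: pcmae-smooth}. Since $\f_t=\lim_{j\to\infty}\f_{t,j}$ and the limit is decreasing, it suffices to prove that $\psi_t\leq \f_{t,j}$ for every $j$; letting $j\to\infty$ then gives $\psi_t\leq \f_t$. Thus the whole matter reduces to a comparison between a genuine weak solution of the degenerate flow~\eqref{pcmae} and a smooth solution of the \emph{less degenerate} flow~\eqref{eq: pcmae-smooth} with $\theta_{t,j}=\theta_t+2^{-j}\omega_X\geq\theta_t$ and smoothed density $\mu_j=e^{\psi_j^+-\psi_j^-}$.

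First I would reduce to a strict comparison. Fix $j$ and, for $\eta>0$ small, set $\f_{t,j}^\eta:=\f_{t,j}+\eta t\tr_{\omega_X}(\chi)\cdot(\text{const})+\eta$ — more precisely a perturbation of the form $\f_{t,j}+\eta(1+t)$ chosen so that $\f_{t,j}^\eta$ is a \emph{strict} supersolution of~\eqref{pcmae}: because $\theta_{t,j}>\theta_t$ and $\mu_j$ differs from $\mu$ by a controlled (and, on the relevant Zariski open set, smooth) factor, one has $\dot{\f}_{t,j}=\log\big[(\theta_{t,j}+\dc\f_{t,j})^n/\mu_j\big]$, and adding a small positive drift makes the inequality $\dot{\f}_{t,j}^\eta>\log\big[(\theta_t+\dc\f_{t,j}^\eta)^n/\mu\big]$ strict on every $[\varepsilon,T]\times\Omega_\varepsilon$. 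On the other side $\psi_t$ satisfies~\eqref{pcmae} exactly on $[\varepsilon,\tmax-\varepsilon]\times\Omega_\varepsilon$ and $\psi_t\to\f_0$ in $L^1$ as $t\to 0^+$, while $\f_{t,j}^\eta$ starts strictly above $\f_0$ near $t=0$. Then I would run the maximum principle on the consider the function $\psi_t-\f_{t,j}^\eta$ on $[\varepsilon,T]\times\Omega_\varepsilon$. Since $\Omega_\varepsilon$ is only Zariski open, the key point is that both $\psi_t$ and $\f_{t,j}$ are $\theta_t$-psh (resp.\ $\theta_{t,j}$-psh) and $\rho$ (the quasi-psh function with analytic singularities defining the big class) tends to $-\infty$ along $X\setminus\Omega_\varepsilon$, so that $\psi_t-\f_{t,j}^\eta-\delta\rho$ (for arbitrarily small $\delta>0$) cannot attain a maximum on the boundary; a maximum is therefore attained at an interior spacetime point or on $\{t=\varepsilon\}$. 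At an interior maximum the parabolic comparison between a subsolution and a \emph{strict} supersolution yields a contradiction via the concavity/monotonicity of $u\mapsto\log\det$, exactly as in the classical argument; hence $\psi_t\leq\f_{t,j}^\eta+\delta\sup_X|\rho|$ on $[\varepsilon,T]\times\Omega_\varepsilon$ plus a term coming from $\sup$ at $t=\varepsilon$ which is handled by first sending $\varepsilon\to0$ (using $\psi_t\to\f_0\le\f_{0,j}$) and then $\delta\to0$, $\eta\to0$, $j\to\infty$.

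The main obstacle I anticipate is the lack of a clean comparison principle at the level of weak solutions on the non-compact locus $\Omega_\varepsilon$: one must genuinely use that $\psi_t$ is globally $\theta_t$-psh (hence bounded above, with controlled $\sup$) together with the barrier $\rho$ to confine the argument to the interior, and one must ensure the boundary behaviour of $\psi_t-\f_{t,j}$ near $t=\varepsilon$ is controlled — this is where the $L^1$-convergence $\psi_t\to\f_0$ and the monotonicity $\f_0\leq\f_{0,j}$ are essential, combined with Proposition~\ref{prop: lowbound}-type lower bounds to pass $\varepsilon\to0$. A secondary technical point is that a weak solution is a priori only known to satisfy the equation on \emph{its own} Zariski open sets $\Omega_\varepsilon'$; since by definition these can be taken inside $\{\rho>-\infty\}$, one intersects them with the $\Omega_\varepsilon$ above and works there. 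Once these are in place, the argument follows the template of~\cite{guedj2017regularizing} essentially verbatim.
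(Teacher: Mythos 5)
Your proposal follows the paper's argument essentially verbatim: you reduce to comparing $\psi_t$ with each smooth approximant $\f_{t,j}$ (using that $\f_{t,j}\searrow\f_t$), confine the maximum of the difference to the Zariski open set where $\psi_t$ is smooth by means of a quasi-psh barrier with analytic singularities, run the parabolic maximum principle there, and control the boundary term at $t=\varepsilon$ via the $L^1$-convergence $\psi_t\to\f_0\le\f_{0,j}$ (the paper does this with Hartogs' lemma and the continuity of $(\varepsilon,x)\mapsto\f_{\varepsilon,j}(x)$), the only cosmetic difference being that the paper extracts the needed strictness from taking the barrier coefficient $\delta<2^{-j}$ against the gap $2^{-j}\omega_X$ rather than from your added drift $\eta(1+t)$. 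One sign must be corrected: the barrier has to be added with the sign that drives the test function to $-\infty$ on the singular set, i.e. you should maximize $\psi_t-\f^{\eta}_{t,j}+\delta\rho$ (compare the paper's $H=\psi_{t+\varepsilon}-\f_{t+\varepsilon,j}+\delta\phi$ with $\phi$ an $\omega_X$-psh function with poles along $X\setminus U_\varepsilon$, so that $-\delta\,\dc\phi\le\delta\omega_X<2^{-j}\omega_X$ at the interior maximum), whereas $-\delta\rho$ as written blows up to $+\infty$ there and destroys the localization.
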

	\begin{proof}
		By construction in the previous paragraph, $\f_{t,j}$ are smooth $(\theta_t+2^{-j}\omega_X)$-psh functions decreasing pointwise to $\f_t$. It thus suffices to show that $\psi_t\leq \f_{t,j}$ for all fixed $j$.
		
		Fix $0<T<\tmax$ and $2^{-j}>\varepsilon>\delta>0$. We let $U_\varepsilon\subset X$ denote the Zariski open set in which $\psi_{t+\varepsilon}$ is smooth. We can find a $\omega_X$-psh function $\phi$ with analytic singularities along $X\setminus U_\varepsilon$; see, e.g.,~\cite{demailly2004numerical}. We apply the maximum principle to the function $H:=\psi_{t+\varepsilon}-\f_{t+\varepsilon,j}+\delta\phi$. Suppose that $H$ achieves its maximum on $[0,T-\varepsilon]\times X$ at $(t_\varepsilon,x_\varepsilon)$ with $t_\varepsilon>0$. Note that $x_\varepsilon\in U_\varepsilon$. We thus have
		\begin{equation*}
		0\leq \frac{\partial}{\partial t}H\leq \log\left[ \frac{(\theta_{t+\varepsilon}+\dc\f_{t+\varepsilon,j}-\delta\dc\phi)^n}{(\theta_{t+\varepsilon}+2^{-j}\omega_X+\dc\f_{t+\varepsilon,j})^n}\right]<0
		\end{equation*} using that $-\dc\phi\leq \omega_X$, which is a contradiction. Letting $\delta\searrow 0$, we obtain
		\[\psi_{t+\varepsilon}(x)-\f_{t+\varepsilon,j}(x)\leq \sup_X(\psi_{\varepsilon}-\f_{\varepsilon,j}).\] Moreover, since $(\varepsilon,x)\mapsto\f_{\varepsilon,j}(x)$ is continuous, it follows from Hartogs' lemma (cf.~\cite[Proposition 8.4]{guedj2017degenerate}) that \[\sup_X(\psi_{\varepsilon}-\f_{\varepsilon,j})\xrightarrow{\varepsilon\to 0}\sup_X(\f_0-\f_{0,j})\leq 0.\] Letting $\varepsilon\to 0$, the desired inequality follows.
	\end{proof}
	The uniqueness we have just shown is referred to as "maximally stretched" by P. Topping in the context of Riemann surfaces; see~\cite[Remark 1.9]{topping2010ricci}.
	
	\subsection{Short time behavior}
	In this subsection, we study the behavior of the solution to the degenerate Monge--Amp\`ere flow in a short time. We show that the flow $\f_t$ starting from a current with positive Lelong numbers also has positive Lelong numbers for a sufficiently short time.  This result follows almost verbatim from the K\"ahler case, as discussed in~\cite[Section 4.2]{di2017uniqueness}. 
	\begin{theorem}
		If $\f_0$ has positive Lelong numbers, then
		\[E_c(\f_0)\subset E_{c(t)}(\f_t),\qquad c(t)=c-2nt.\]
		In particular, the maximal solution $\f_t$ has positive Lelong numbers for any $t<1/2nc(\f_0)$.
	\end{theorem}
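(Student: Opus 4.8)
The plan is to follow the strategy of~\cite[Sect.~4.2]{di2017uniqueness}, comparing the solution $\f_t$ with a suitable subsolution built from $\f_0$ and tracking how the Lelong number can decay in time. Fix a point $x_0\in E_c(\f_0)$, so $\nu(\f_0,x_0)=c'\geq c$, and work in a holomorphic coordinate chart $x_0\in V\subset X$ centered at $x_0$. On this chart $\f_0(z)\leq c'\log\|z-x_0\|+O(1)$, and I want to propagate a bound of the form $\f_t(z)\leq (c-2nt)\log\|z-x_0\|+O(1)$ for $t<c/2n$, which exactly says $x_0\in E_{c-2nt}(\f_t)$.

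First I would construct a local comparison function. The natural candidate, as in the K\"ahler case, is $u_t(z):=\f_0(z)+2nt\log\|z-x_0\|+h_t(z)$ for a small smooth correction $h_t$ absorbing the curvature of $\theta_t$, or more robustly one works with $P_{\theta_t}$-type envelopes; but the cleanest route is the maximum-principle argument: since $\f_{t,j}$ (the approximants) are smooth and decrease to $\f_t$, it suffices to bound each $\f_{t,j}$. On a fixed small ball $B=B(x_0,2r)\Subset V$, consider $v(t,z):=\f_{0,j}(z)+2nt\log\frac{\|z-x_0\|}{2r}+Ct$ with $C$ chosen so that $v$ is a subsolution of~\eqref{eq: pcmae-smooth}: one checks $\theta_{t,j}+\dc v\geq 0$ on $B$ (the term $2nt\,\dc\log\|z-x_0\|$ is a positive current away from $x_0$, so this only helps, modulo the smooth part which $Ct$ and a large constant handle), and that $\partial_t v\leq \log[(\theta_{t,j}+\dc v)^n/\mu_j]$; the key point is that near the pole $(\theta_{t,j}+\dc v)^n$ blows up fast enough that the Monge--Amp\`ere term dominates $\partial_t v = 2n\log\frac{\|z-x_0\|}{2r}+C$, which is $\to-\infty$ at $x_0$. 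On the boundary $\partial B$ we have $\log\frac{\|z-x_0\|}{2r}=0$, so $v(t,\cdot)=\f_{0,j}+Ct\geq \f_{t,j}$ there provided $C$ exceeds the (uniform) time-derivative bound on $\partial B$ coming from Proposition~\ref{prop: upper1dot} and Proposition~\ref{prop: C0upper}; and $v(0,\cdot)=\f_{0,j}\geq\f_{0,j}$ at $t=0$. The comparison principle then gives $\f_{t,j}\leq v(t,\cdot)$ on $[0,T_1]\times B$, hence $\f_t(z)\leq \f_0(z)+2nt\log\frac{\|z-x_0\|}{2r}+Ct$, and evaluating Lelong numbers at $x_0$ yields $\nu(\f_t,x_0)\geq \nu(\f_0,x_0)-2nt\geq c-2nt$.

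The main obstacle I expect is making the subsolution genuinely admissible near the singular locus: the correction must ensure $\theta_{t,j}+\dc v\geq 0$ \emph{globally on $B$} including where $\f_{0,j}$ is large and its curvature form is only $\geq -(\theta+2^{-j}\omega_X)$, while simultaneously keeping the Monge--Amp\`ere inequality $\partial_t v\leq \log[(\theta_{t,j}+\dc v)^n/\mu_j]$ — here one must be careful that $\mu_j=e^{\psi_j^+-\psi_j^-}$ may itself be large, but since we only need a one-sided bound and $\psi_j^-$ is controlled in $L^p$ with the slope condition, this is where the hypothesis $c(t)=c-2nt$ (rather than a sharper constant) buys enough room; the factor $2n$ rather than $n$ is exactly the slack needed to dominate the density and the torsion/curvature error terms in the Hermitian (non-K\"ahler) setting. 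Finally, once the pointwise Lelong bound holds for every $x_0\in E_c(\f_0)$, the inclusion $E_c(\f_0)\subset E_{c-2nt}(\f_t)$ follows by definition, and taking $c=c(\f_0)$ (or rather any $c<$ the minimal positive Lelong number, then letting it sweep) gives the stated conclusion that $\f_t$ retains positive Lelong numbers for all $t<1/(2nc(\f_0))$ — note this is precisely the complement of the smoothing regime $t> p^*/(2c(\f_0))$ from Theorem~\ref{thmB} up to the constant, consistent with the expectation that no regularization can occur before the Lelong numbers are exhausted.
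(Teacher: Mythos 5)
There is a genuine gap: your comparison function carries the log correction with the wrong sign, and the inequality you derive from it is false. To prove $\nu(\f_t,x_0)\geq c-2nt$ you need an \emph{upper} bound of the form $\f_t\leq (c-2nt)\log\|z-x_0\|+O(1)$, i.e.\ a supersolution whose pole coefficient at $x_0$ \emph{decreases} in time. Your $v(t,z)=\f_{0,j}(z)+2nt\log\frac{\|z-x_0\|}{2r}+Ct$ does the opposite: adding $2nt\log\|z-x_0\|$ makes the majorant \emph{more} singular, so even if $\f_t\leq v$ held it would give $\nu(\f_t,x_0)\geq \nu(\f_0,x_0)+2nt$, not $\geq c-2nt$ (your final ``evaluating Lelong numbers'' step has a sign error). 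Worse, the bound $\f_t\leq \f_0+2nt\log\frac{\|z-x_0\|}{2r}+Ct$ cannot hold: Proposition~\ref{prop: lowbound} gives $\f_t\geq \f_0-C(t-t\log t)$, which contradicts your bound at points $z\neq x_0$ near $x_0$ where $\f_0(z)$ is finite. The confusion is visible in the comparison step itself: you verify the \emph{subsolution} inequality for $v$ (indeed $\partial_t v\to-\infty$ at the pole while $(\theta_{t,j}+\dc v)^n$ blows up), but a subsolution compared against the solution yields $v\leq \f_{t,j}$, a lower bound --- the opposite of what you then invoke.

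The fix is not to flip the sign of the log term, since subtracting $2nt\log\|z-x_0\|$ destroys plurisubharmonicity; one must rescale multiplicatively, as the paper does. Choose a global $\phi=c\,\chi(x)\log\|x-x_0\|$ (with $\chi$ a cutoff) which is $B\omega_X$-psh, satisfies $\phi\geq\f_0$ after adding a constant, $\phi\leq 0$, and $e^{\gamma\phi}\in\mathcal{C}^\infty(X)$ for $\gamma=2/c$. Then $\omega_X+\dc\phi\leq Ce^{-\gamma\phi}\omega_X$, so $\phi_t:=(1-(n\gamma+1)t)\phi+nt\log(2C)$ stays quasi-psh (its coefficient lies in $[0,1]$) and satisfies $(\omega_X+\dc\phi_t)^n\leq (2C)^ne^{-n\gamma\phi}\omega_X^n\leq e^{\dot{\phi}_t+\psi^+-\psi^-}\omega_X^n$: it is a genuine \emph{supersolution} (this is Lemma~\ref{lem: lem44}). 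The maximum principle gives $\f_{t,j}\leq\phi_t$, hence $\nu(\f_t,x_0)\geq (1-2nt/c)\,c=c-2nt$. Note also that the factor $2n$ arises as $n\gamma\cdot c=2n$ from this Monge--Amp\`ere computation, not as ``slack to dominate torsion and density terms'' as you suggest.
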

	\begin{proof}
		The proof is identical to that of~\cite[Theorem 4.5]{di2017uniqueness}. We give a sketch of the proof here. Fix $x_0\in E_c(\f_0)$. We can find a cutoff function $\chi\in\mathcal{C}^\infty(X)$ with support near $x_0$ and $\chi=1$ on a neighborhood of $x_0$. Define $\phi:=\chi(x)c\log\|x-x_0\|$, which is $B\omega_X$-psh, and $e^{2\phi/c}\in\mathcal{C}^\infty(X)$. Since $x_0\in E_c(\f_0)$ we can choose $\phi$ so that $\phi\geq \f_0$ by adding a positive constant. By Lemma~\ref{lem: lem44}, we obtain \[\f_t\leq (1-2nt/c)\phi+Ct, \] which implies $\nu(\f_t,x_0)\geq c-2nt$. If $t<1/2nc(\f_0)$, then by Skoda's integrability theorem, $e^{-2\f_0/c}$ is not integrable for $2nt<c<1/c(\f_0)$. Therefore, $E_c(\f_0)$ is not empty, neither is $E_{c(t)}(\f_t)$ for sufficiently small $t>0$.
	\end{proof}
	\begin{lemma}\label{lem: lem44}
		Assume that $\phi\in\PSH(X,\omega_X)$ satisfies $e^{\gamma\phi}\in\mathcal{C}^\infty(X)$ for some constant $\gamma>0$, and $0\geq\psi^{\pm}\geq \phi\geq\f_0$.  Then, there exists a positive constant $C$ depending on an upper bound for $\dc e^{\gamma\phi }$ such that
		\[\f(t)\leq (1-(n\gamma+1)t)\phi+Ct, \quad\forall\,t\in [0,1/n\gamma].\]
	\end{lemma}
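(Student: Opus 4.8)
The plan is to construct an explicit supersolution of the flow equation~\eqref{pcmae2} (or rather of its smooth approximation~\eqref{eq: pcmae-smooth}) of the form $w_t := (1-n\gamma t)\phi + h(t)$ for a suitable affine-linear function $h$, and then invoke the maximum principle to conclude $\f_t \le w_t$. The candidate exponent $1-n\gamma t$ is forced: along the flow the expected decay of the Lelong number is linear, and $w_0 = \phi \ge \f_0$ gives the correct initial comparison. The key computation is to check that $w_t$ is genuinely a supersolution, i.e. $\dot w_t \ge \log[(\theta_t + \dc w_t)^n/\mu]$ on $[0, 1/n\gamma]$.

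\emph{First I would} compute the two sides. We have $\dot w_t = -n\gamma\,\phi + h'(t)$. For the right-hand side, $\theta_t + \dc w_t = \theta_t + (1-n\gamma t)\dc\phi$. Since $\phi \in \PSH(X,\omega_X)$, we have $\dc\phi \ge -\omega_X$, and since $\theta_t$ is a smooth (bounded) family of Hermitian forms with $\theta_t \le A\omega_X$, a crude bound gives $\theta_t + (1-n\gamma t)\dc\phi \le C_1\omega_X$ for a uniform $C_1$ depending only on $A$ and $n\gamma$; hence $(\theta_t + \dc w_t)^n \le C_1^n\,\om_X^n$. The subtlety is that this upper bound is in terms of $\om_X^n = dV_X$, while the measure in the equation is $\mu = e^{\psi^+ - \psi^-}dV_X$, so I need a lower bound on $\mu$, i.e. an upper bound on $\psi^- - \psi^+$. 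This is exactly where the hypothesis $0 \ge \psi^\pm \ge \phi$ enters: it gives $\psi^+ - \psi^- \ge \phi - 0 = \phi$, so $\mu \ge e^{\phi}\,dV_X$, and therefore
\begin{equation*}
\log\left[\frac{(\theta_t + \dc w_t)^n}{\mu}\right] \le \log\left[\frac{C_1^n\,dV_X}{e^{\phi}\,dV_X}\right] = n\log C_1 - \phi.
\end{equation*}
Comparing, the supersolution inequality $\dot w_t \ge $ RHS will hold provided $-n\gamma\phi + h'(t) \ge n\log C_1 - \phi$, i.e. $(1 - n\gamma)\phi + h'(t) \ge n\log C_1$; since $1 - n\gamma$ could be positive or negative and $\phi$ ranges over $[-\infty, 0]$ — actually $\phi \le 0$ — a safe choice is $h'(t) \equiv C := n\log C_1$ when $n\gamma \le 1$, and otherwise absorb the term $(1-n\gamma)\phi \le 0$ into a constant using that $\phi$ is integrable... but here I must be careful that $\phi$ is only bounded above by $0$, not below. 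This is the main obstacle, discussed below; modulo it, setting $h(t) = Ct$ yields $w_t = (1-n\gamma t)\phi + Ct$ as claimed.

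\textbf{The main obstacle.} The naive maximum principle argument compares $\f_t - w_t$ and wants it to attain its maximum at $t=0$; but $w_t = (1-n\gamma t)\phi + Ct$ takes the value $-\infty$ precisely on $\{\phi = -\infty\}$ (for $t < 1/n\gamma$), so $\f_t - w_t$ need not be bounded above a priori and the maximum could "escape to infinity" along the polar set of $\phi$. The remedy — exactly as in~\cite{di2017uniqueness} — is that $e^{\gamma\phi} \in \mathcal{C}^\infty(X)$, which says $\phi$ has a very mild (integrable, indeed the associated singularity is as tame as possible) behavior, and one runs the computation at the level of the \emph{smooth approximants} $\f_{t,j}$ solving~\eqref{eq: pcmae-smooth} with $\theta_{t,j} = \theta_t + 2^{-j}\om_X$ and $\mu_j = e^{\psi^+_j - \psi^-_j}dV_X$, where $\psi^\pm_j \searrow \psi^\pm$ are the smooth decreasing regularizations. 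Since $\psi^-_j \le \psi^-_1$ is uniformly bounded above (by $|\sup_X\psi^\pm_j|$ control) one still gets a uniform lower bound $\mu_j \ge e^{\phi}\,dV_X$ — or rather one works with $e^{\gamma\phi}$ as the genuinely smooth quantity and tracks $\dc e^{\gamma\phi}$, which is where the stated constant "$C$ depending on an upper bound for $\dc e^{\gamma\phi}$" comes from: writing $\psi := e^{\gamma\phi}/\gamma$ or comparing against $\phi$ directly, the term $\tr_{\omega_{t,j}}(\dc\phi)$ appearing when one differentiates must be controlled, and $\dc\phi = \gamma^{-1}(e^{-\gamma\phi}\dc e^{\gamma\phi} - \gamma\, e^{-\gamma\phi}\,\partial e^{\gamma\phi}\wedge\bar\partial e^{\gamma\phi}/e^{\gamma\phi}\cdot\!\ldots)$; the upshot is a clean bound $\dc\phi \ge -C\om_X$ away from the poles with $C$ depending on $\sup_X|\dc e^{\gamma\phi}|_{\om_X}$. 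With the approximants smooth, $\f_{t,j} - (1-n\gamma t)\phi - Ct$ is genuinely bounded above (it is $-\infty$ only where $\phi = -\infty$, but near there it is very negative), the classical maximum principle applies, the maximum is attained on $\{t=0\}$ where $\f_{0,j} \ge \f_0$ and... wait, one needs $\f_{0,j} \le \phi$; since $\f_{0,j} \searrow \f_0 \le \phi$ one does not have this for fixed $j$, so instead one compares $\f_{0,j}$ with $\phi + (\sup_X\f_{0,j} - \sup_X\f_0)$-type corrections, or simply notes $\sup_X(\f_{0,j} - \phi) \to \sup_X(\f_0 - \phi) \le 0$ by Hartogs and absorbs the error into $Ct$ after letting $j \to \infty$. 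Finally, letting $j \to \infty$ one has $\f_{t,j} \searrow \f_t$ and obtains $\f_t \le (1-n\gamma t)\phi + Ct$ on $[0, 1/n\gamma]$.
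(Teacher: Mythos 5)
There is a genuine gap in the central computation. Your verification that $w_t=(1-n\gamma t)\phi+h(t)$ is a supersolution rests on the claim that $\theta_t+(1-n\gamma t)\dc\phi\leq C_1\omega_X$, which you deduce from $\dc\phi\geq-\omega_X$. But $\dc\phi\geq-\omega_X$ is a \emph{lower} bound, and an upper bound on $\dc\phi$ does not exist for a general $\omega_X$-psh function: near the poles of $\phi$ the current $\omega_X+\dc\phi\geq 0$ is unbounded above. The entire point of the hypothesis $e^{\gamma\phi}\in\mathcal{C}^\infty(X)$ is to supply the correct upper bound: since $\dc e^{\gamma\phi}=\gamma e^{\gamma\phi}\left(\dc\phi+\gamma\, i\,\partial\phi\wedge\bar\partial\phi\right)\geq \gamma e^{\gamma\phi}\dc\phi$, one gets $\dc\phi\leq C e^{-\gamma\phi}\omega_X$ with $C$ controlled by an upper bound for $\dc e^{\gamma\phi}$ — note the direction and the weight $e^{-\gamma\phi}$; your obstacle paragraph instead extracts only "$\dc\phi\geq -C\omega_X$ away from the poles", which is again the wrong inequality and is not where the constant in the statement comes from. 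With the correct bound one obtains $(\omega_X+\dc w_t)^n\leq (2C)^n e^{-n\gamma\phi}\,\omega_X^n$, and the factor $e^{-n\gamma\phi}$ is precisely what the term $-n\gamma\phi$ in $\dot w_t$ is there to cancel. In your version this factor is absent, so the supersolution inequality degenerates to $(1-n\gamma)\phi+h'(t)\geq n\log C_1$, which fails as $\phi\to-\infty$; this cannot be "absorbed into a constant", and it is a more serious defect than the boundedness-of-the-maximum issue you flag.

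For comparison, the paper's proof takes your intended route but with the two ingredients you are missing: it uses $\dc\phi\leq Ce^{-\gamma\phi}\omega_X$ to get $(\omega_X+\dc\phi_t)^n\leq(2C)^ne^{-n\gamma\phi}\omega_X^n$, and it takes the comparison function $\phi_t=(1-(n\gamma+1)t)\phi+t\log(2^nC^n)$ — the extra $+1$ in the exponent produces the term $-\phi$ in $\dot\phi_t$ needed to absorb the measure bound $\mu\geq e^{\phi}dV$ (your observation $\psi^+-\psi^-\geq\phi$ is correct and is used for exactly this). Your peripheral remarks (working with the smooth approximants $\f_{t,j}$, handling $\f_{0,j}\not\leq\phi$ via Hartogs, letting $j\to\infty$) are consistent with the paper, but the supersolution computation, which is the heart of the lemma, does not go through as written.
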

	\begin{proof}
		Assume that 
		$\theta_t\leq \omega_X$ for $t\in[0,1/(n\gamma+1)]$. As argued in~\cite[Lemma 4.4]{di2017uniqueness}, we can assume that $\phi$ is smooth and work with the approximants $\f_{t,j}$ instead. We choose $C>0$ depending only on an upper bound for $\dc e^{\gamma\phi}$, such that $\dc\phi\leq Ce^{-\gamma\phi}\omega_X$. Consider the function
		\[\phi_t:=(1-(n\gamma+1)t)\phi+t\log (2^nC^n).\]
		We observe that
		\[0\leq \omega_X+\dc\phi\leq 2Ce^{-\gamma\phi}\omega_X,\]
		hence \[(\omega_X+\dc\phi_t)^n\leq (2C)^ne^{-n\gamma\phi}\omega_X^n\leq e^{\dot{\phi}_t+\psi^+-\psi^-}\omega_X^n.\]
		Therefore, $\phi_t$ is a supersolution to the parabolic equation \[(\omega_X+\dc u_t)^n=e^{\dot{u}_t+\psi^+-\psi^-}\omega_X^n\] while $\f_{t,j}$ is a subsolution. By the classical maximum principle, it follows that $\f_{t,j}\leq \phi_t$ for any fixed $j$. This completes the proof.
	\end{proof}
	
	\subsection{Convergence at time zero}\label{sect: conv_zero}
	We study in this part the convergence at zero of the degenerate complex Monge--Amp\`ere flow. 
	
	We recall the quasi-monotone convergence in the sense of Guedj--Trusiani~\cite{guedj2022monotone}: $\f_j$ converges quasi-monotonically to $\f$ if $P_\theta(\inf_{\ell\geq j}\f_\ell)$ is a sequence of $\theta$-psh functions that increases to $\f$.
	\begin{theorem}
		The flow $\f_t$ converges quasi-monotonically to $\f_0$ as $t\to 0^{+}$.
	\end{theorem}
	\begin{proof}
		By Proposition~\ref{prop: lowbound}, we have that for small $t>0$,
		\[ \f_t\geq \f_0-C(t-t\log t).\] It follows that
		\[P_{\theta}\left(\inf_{0<s\leq t}\f_s\right)\geq \f_0-C(t-t\log t),\]  which completes the proof.
	\end{proof}
	\begin{theorem}
		Assume that $\f_0$ is continuous in an open set $U\subset X$. Then $\f_t$ converges to $\f_0$ in $L_{\rm loc}^\infty(U)$.
	\end{theorem}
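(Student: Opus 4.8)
The plan is to establish, for each compact $K\subset U$, a two‑sided estimate showing $\sup_K|\f_t-\f_0|\to 0$ as $t\to 0^+$.

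For the lower bound I would simply invoke Proposition~\ref{prop: lowbound}: it gives $\f_t\geq \f_0-C(t-t\log t)$ on all of $X$ for $t$ small, and since $\f_0$ is finite on $K$ (being continuous there) this yields $\inf_K(\f_t-\f_0)\geq -C(t-t\log t)\to 0$. No continuity is needed for this half, only finiteness of $\f_0$ on $K$.

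The upper bound is where the continuity hypothesis genuinely enters, and it is the crux. The idea is to combine three facts already at hand: $\f_t\to\f_0$ in $L^1(X)$ as $t\to 0^+$ (established in the proof of Theorem~\ref{thmB}); $\sup_X\f_t\leq C$ uniformly in $t\in[0,T]$ (Proposition~\ref{prop: C0upper}); and the fact that for $t$ small the Hermitian forms $\theta_t$ are dominated by a fixed multiple of $\ddc|z|^2$ on any coordinate ball $B\Subset U$, so that the functions $z\mapsto \f_t(z)+C_1|z|^2$ are plurisubharmonic on $B$, uniformly bounded above, and converge in $L^1(B)$ to the \emph{continuous} function $\f_0+C_1|z|^2$. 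Hartogs' lemma then applies and gives, for every $\varepsilon>0$ and every compact $K'\Subset B$, that $\f_t+C_1|z|^2\leq \f_0+C_1|z|^2+\varepsilon$ on $K'$ for $t$ small, i.e. $\f_t\leq \f_0+\varepsilon$ on $K'$. Covering $K$ by finitely many such balls gives $\limsup_{t\to 0^+}\sup_K(\f_t-\f_0)\leq 0$. Since Hartogs' lemma is usually phrased for sequences, one runs it along an arbitrary sequence $t_k\to 0$, or argues by contradiction.

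The main obstacle — really the only delicate point — is making sure that the mere continuity of $\f_0$ suffices for Hartogs to produce a genuinely \emph{uniform} bound rather than a bound by local suprema of $\f_0$; it does, precisely because $\f_0+C_1|z|^2$ is continuous, so its local suprema on small balls converge to its pointwise values. If $\f_0$ were only upper semicontinuous this step would fail and one could only recover $L^1$‑ or capacity‑convergence. The remaining bookkeeping (choosing the balls, the domination $\theta_t\leq C_1\ddc|z|^2$ for small $t$, and patching the local estimates over $K$) is routine.
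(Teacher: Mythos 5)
Your proof is correct, but it reaches the upper bound by a genuinely different route than the paper. The paper (following Di Nezza--Lu) upgrades Proposition~\ref{prop: lowbound} to a two-time comparison $\f_s-C(t-s)\log(t-s)-C(t-s)\leq\f_t$ for $0\leq s<t$ small, deduces that the modified functions $u_t=\f_t+(C+\log 4)t-Ct\log t$ satisfy $u_t\geq u_{t/2}$, so that $u_{t_02^{-j}}$ decreases to $\f_0$, and then invokes Dini's theorem on compact subsets of $U$; the continuity of $\f_0$ enters through Dini. You instead take the lower bound from the one-time version of Proposition~\ref{prop: lowbound} and obtain the upper bound from the $L^1(X)$ convergence $\f_t\to\f_0$ together with Hartogs' lemma, after localizing on a coordinate ball and adding $C_1|z|^2$ to make the functions honestly psh and uniformly bounded above; here continuity of $\f_0$ enters through the continuous majorant needed in Hartogs. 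Both arguments are sound and use only facts already established in the paper. Your route is the softer and more standard one (it avoids re-running the subsolution construction with initial time $s>0$, which the paper's sketch implicitly requires), at the price of leaning on the $L^1$ convergence as an input and of the sequence-versus-continuum bookkeeping you correctly flag; the paper's route yields the slightly stronger structural information that a modified family is monotone along dyadic times, which is in the spirit of the quasi-monotone convergence discussed just before this theorem.
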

	\begin{proof}The proof closely follows the arguments in the K\"ahler case~\cite{di2017uniqueness}.
		Without loss of generality, we assume that $\f_t\leq 0$. By Proposition~\ref{prop: lowbound}, there exists a uniform constant $C>0$ and a small time $t_0$ such that for $0\leq s<t\leq t_0$,
		\[\f_s-C(t-s)\log(t-s)-C(t-s)\leq\f_t.  \]   Set $u_t:=\f_t+(C+\log 4)t-Ct\log t$. Substituting $s=t/2$, we deduce that $u_t\geq u_{t/2}$, hence the sequence $u_{t_02^{-j}}$ decreases to $u_0=\f_0$. The conclusion follows from Dini's theorem.
	\end{proof}
	
	We also have the same result as in the K\"ahler case~\cite[Theorem 6.3]{di2017uniqueness}. We assume that $\theta$ is a big form and that $f=e^{\psi^+-\psi^-}\in L^p$, for some $p>1$, where $\psi^{\pm}$ are quasi-psh functions. Assume moreover that $\psi^-\in L^\infty_{\rm loc}(X\setminus D)$ for some closed set $D\subset X$. 
	It follows from~\cite[Theorem 4.1]{guedj2021quasi} that there exists a bounded  $\theta$-psh function $\f_0$ such that $\sup_X\f_0=0$ and
	\[(\theta+\dc\f_0)^n=cfdV.\]
	We recall that there is $\rho\in\PSH(X,\theta)$ with analytic singularities along a closed subset $E$ such that $\theta+\dc\rho\geq 2\delta\omega_X$ for some $\delta>0$. Set $U:=X\setminus (D\cup E)$.
	\begin{theorem}
		Assume $\f_0$ is as above. Let $\f_t$ be the weak solution of the equation~\eqref{pcmae} with the initial data $\f_0$. Then $\f_t$ converges to $\f_0$ in $\mathcal{C}^\infty_{\rm loc}(U)$.
	\end{theorem}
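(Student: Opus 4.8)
The statement concerns convergence at time zero of the weak Monge–Amp\`ere flow $\f_t$ when the initial datum $\f_0$ is the bounded $\theta$-psh solution of the elliptic equation $(\theta+\dc\f_0)^n=cfdV$, and one wants $\mathcal{C}^\infty_{\rm loc}(U)$ convergence on $U=X\setminus(D\cup E)$. The plan is to first record that $\f_0$ is bounded (in particular has zero Lelong numbers, so all the a priori estimates of Section~\ref{sect: estimate} apply with $c(\f_0)=+\infty$ and $\varepsilon$ can be taken arbitrarily small), and that on $U$ the density $f=e^{\psi^+-\psi^-}$ is smooth and $\psi^-$ is locally bounded. Then $\f_0$ restricted to $U$ is actually smooth: indeed one can run the elliptic regularity theory (or quote~\cite{guedj2021quasi}, \cite{di2017uniqueness}) to see that $\f_0\in\mathcal{C}^\infty(U)$, since $\f_0$ solves a nondegenerate complex Monge–Amp\`ere equation with smooth positive right-hand side locally on $U$ (using $\theta+\dc\rho\geq 2\delta\omega_X$ to get local strict positivity after subtracting a multiple of $\rho$).

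\medskip
Next I would establish the two-sided comparison with $\f_0$. The upper bound $\f_t\leq \f_0+Ct$ should follow by the maximum principle comparing with a supersolution of the form $\f_0+\phi_1 t/c+Ct$ built from the elliptic solution, exactly as in Proposition~\ref{prop: C0upper}; more directly, since $\f_0$ itself nearly solves the stationary version, $\f_0+(\text{const})\,t$ is a supersolution. For the lower bound, Proposition~\ref{prop: lowbound} gives $\f_t\geq \f_0-C(t-t\log t)$ for small $t$, which already yields $\f_t\to\f_0$ in $L^1(X)$ and, combined with the upper bound, in $\mathcal{C}^0_{\rm loc}(U)$ by the Dini-type argument used in the preceding theorem (the sequence $u_t=\f_t+(C+\log 4)t-Ct\log t$ is monotone along $t\mapsto t/2$ and decreases to $\f_0$, which is continuous on $U$). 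So far this gives only $\mathcal{C}^0_{\rm loc}$ convergence on $U$.

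\medskip
The upgrade to $\mathcal{C}^\infty_{\rm loc}(U)$ is where the real work lies, and it is the analogue of the strategy in~\cite[Theorem 6.3]{di2017uniqueness}. The idea is to obtain, on compact subsets $K\Subset U$, uniform-in-$t$ (as $t\to 0^+$) bounds on $\dot\f_t$ and on $\Delta_{\omega_X}\f_t$, not merely on $[\varepsilon,T]\times K$ but up to $t=0$. For $\dot\f_t$ from below one invokes Proposition~\ref{prop: zero1}: since $\f_0$ is smooth on $U$ one has $\dot\f_0=\log((\theta+\dc\f_0)^n/\mu)=\log c$ bounded on $K$, and $\f_0\geq \tfrac12(\rho+\delta_0\psi_2)$ locally after the usual normalization, so the hypotheses of Proposition~\ref{prop: zero1} are met with smooth $\psi_1,\psi_2$, giving a lower bound for $\dot\f_t$ uniform down to $t=0$ on $K$. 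The upper bound for $\dot\f_t$ is Proposition~\ref{prop: upper1dot} together with $\f_t\leq\f_0+Ct$, which gives $\dot\f_t\leq (\f_t-\f_0)/t+n\leq C+n$; one must be a little careful because of the $(\f_t-\f_0)/t$ term, but on $K$ the $\mathcal{C}^0$-closeness $|\f_t-\f_0|\leq Ct$ makes this term bounded. Then the Laplacian bound comes from Proposition~\ref{prop: zero2}, again applicable because $\Delta_{\omega_X}\f_0$ is bounded on $K$ (smoothness of $\f_0$ on $U$) and $\f_0$ dominates $\tfrac12(\rho+\delta_0\psi_2)$ locally; this yields $\tr_{\omega_X}\omega_t\leq C_K$ on $[0,T_1]\times K$. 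With $\f_t$, $\dot\f_t$ and $\Delta\f_t$ all bounded uniformly down to $t=0$ on compacts of $U$, the parabolic Evans–Krylov theory and Schauder estimates (as in~\cite{gill2011convergence,boucksom2013regularizing}) give uniform $\mathcal{C}^k_{\rm loc}(U)$ bounds on $[0,T_1]\times K$ for all $k$; combined with the already-established $\mathcal{C}^0_{\rm loc}(U)$ convergence $\f_t\to\f_0$, interpolation forces $\f_t\to\f_0$ in $\mathcal{C}^\infty_{\rm loc}(U)$ as $t\to0^+$.

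\medskip
The main obstacle is verifying that Propositions~\ref{prop: zero1} and~\ref{prop: zero2} genuinely apply near $t=0$: their hypotheses require dominating $\dot\f_0$ and $\Delta_{\omega_X}\f_0$ by smooth quasi-psh weights and controlling $\f_0$ from below by $\tfrac12(\rho+\delta_0\psi_2)$, and one must check that the elliptic solution $\f_0$ has exactly the needed regularity and lower bound on $U$ — essentially that $\f_0$ is smooth on all of $U=X\setminus(D\cup E)$ and not just on a smaller Zariski open set. This is where one must carefully combine the local nondegeneracy coming from bigness of $\theta$ (via $\rho$) with the local boundedness of $\psi^-$ on $U$ and standard elliptic bootstrapping; once this is in place, the rest is a routine adaptation of~\cite[Theorem 6.3]{di2017uniqueness}.
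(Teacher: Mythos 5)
Your overall strategy --- obtain bounds on $\dot\f_t$ and $\Delta_{\omega_X}\f_t$ that hold uniformly down to $t=0$ on compacts of $U$ via Propositions~\ref{prop: zero1} and~\ref{prop: zero2}, then bootstrap with Evans--Krylov and Schauder --- is exactly the paper's. But there is a genuine gap in how you propose to invoke those two propositions, and you flag it yourself as ``the main obstacle'' without closing it. Propositions~\ref{prop: zero1} and~\ref{prop: zero2} are \emph{global} maximum-principle statements for \emph{smooth} solutions of the flow: their hypotheses are global inequalities $\dot\f_0\geq C_1\psi_1$, $\Delta_{\omega_X}\f_0\leq e^{-C_1\psi_1}$, $\f_0\geq\tfrac12(\rho+\delta_0\psi_2)$ on all of $X$, with quasi-psh weights that degenerate exactly on $X\setminus U$; the whole point is that these weights force the extremum of the test function away from the singular locus. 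Knowing only that $\f_0$ is smooth on $U$ and that $\dot\f_0$, $\Delta\f_0$ are bounded on each $K\Subset U$ gives none of this: the extremum of $H$ could sit arbitrarily close to $D\cup E$, and moreover $\dot\f_0$ is not even defined for the weak solution at $t=0$, so the propositions cannot be applied to $\f_t$ directly.

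The paper closes this by working at the level of approximants throughout: regularize $\psi^\pm$ by Demailly to smooth $\psi^\pm_j$, solve the regularized elliptic equations $(\theta+2^{-j}\omega_X+\dc\f_{0,j})^n=c_je^{\psi^+_j-\psi^-_j}dV$ via Tosatti--Weinkove, and import from \cite[Theorem 4.1]{guedj2021quasi} the uniform-in-$j$ \emph{global} estimates $c_j\to c>0$, $\f_{0,j}\geq\varepsilon(\rho+\delta\psi^-)-C(\varepsilon)$ and $\Delta_{\omega_X}\f_{0,j}\leq e^{-C(\varepsilon)(\rho+\delta\psi^-)}$. For these globally smooth initial data one has $\dot\f_{0,j}=\log c_j$ uniformly bounded, so Propositions~\ref{prop: zero1} and~\ref{prop: zero2} apply to the smooth approximating flows $\f_{t,j}$ with constants independent of $j$, and the $\mathcal{C}^\infty_{\rm loc}(U)$ convergence follows by letting $j\to\infty$. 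Your $\mathcal{C}^0$ discussion (supersolution upper bound, Proposition~\ref{prop: lowbound} lower bound, Dini) is fine but secondary; without the approximation step, or an equivalent substitute supplying the global quasi-psh domination of $\f_0$, $\Delta\f_0$ and the initial time-derivative, the core of your argument does not go through.
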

	\begin{proof}
		The proof is quite close to~\cite[Theorem 6.3]{di2017uniqueness}. We sketch the key steps for the reader's convenience. First, we approximate $\psi^{\pm}$ by their smooth approximants $\psi^{\pm}_j$, thanks to~\cite{demailly1992regularization}. We then apply Tosatti--Weinkove's theorem~\cite{tosatti2010complex} to obtain smooth $(\theta+2^{-j}\omega_X)$-psh functions $\f_{0,j}$ such that $\sup_X\f_{0,j}=0$ and
		\[(\theta+2^{-j}\omega_X+\dc\f_{0,j})^n=c_je^{\psi^+_j-\psi^-_j}dV.\] Note here that $f_j=e^{\psi^+_j-\psi^-_j}$ have uniform $L^p$-norms. The same arguments as in~\cite[Theorem 4.2]{guedj2021quasi} show that 
		\begin{itemize}
			\item $c_j\to c>0$;
			\item for any $\varepsilon>0$,  $\f_{0,j}\geq \varepsilon(\rho+\delta\psi^{-})-C(\varepsilon)$;
			\item $\Delta_{\omega_X}\f_{0,j}\leq e^{-C(\varepsilon)(\rho+\delta\psi^-)}. $
		\end{itemize}
		Let $\f_{t,j}$ be a smooth solution to the equation~\eqref{pcmae} with initial data $\f_{0,j}$. The sequence $\f_{t,j}$ converges to the unique weak solution $\f_t$.
		We apply Proposition~\ref{prop: zero1} and Proposition~\ref{prop: zero2}, together with bootstrapping arguments, to obtain locally uniform estimates for all derivatives of $\f_{t,j}$. This leads to convergence in $\mathcal{C}^\infty_{\rm loc}(U)$.
	\end{proof}
	\section{Finite  time singularities}\label{sect: finite_sing}
	In this section, we study the finite time singularities of the Chern--Ricci flow and provide proof of Theorem~\ref{thmA}.
	
	We consider a family of Hermitian metrics $\omega(t)$ evolving under the Chern--Ricci flow~\eqref{crf} with the initial Hermitian metric $\omega_0$. Suppose that the maximal existence time of the flow is finite, i.e., $T_{\max}<\infty$.  The form $\alpha_{T_{\max}} := \omega_0-T_{\max}\textrm{Ric}(\omega_0)$ is nef in the sense of~\cite{guedj2022quasi2}, i.e., for each $\varepsilon>0$ there exists $\psi_\varepsilon\in\mathcal{C}^\infty(X)$ such that $\alpha_{T_{\max}}  +\dc \psi_\varepsilon\geq -\varepsilon\omega_0$. 
	Indeed, for $\varepsilon>0$,
	\[\alpha_{T_{\max} }  +\varepsilon\omega_0=(1+\varepsilon)\left(\omega_0-\frac{ T_{\max} } {1+\varepsilon}\textrm{Ric}(\omega_0) \right),\]
	and since $\frac{T_{\max}}{1+\varepsilon} <T_{\max}$, we have $\omega_0-\frac{T_{\max}}{1+\varepsilon}\textrm{Ric}(\omega_0)+\dc \psi>0$ for some smooth function $\psi$. We assume that $\alpha_{T_{\max}}$ is {\em uniformly non-collapsing}, i.e.,
	\begin{equation}\label{eq: noncollapsing}
	\int_X(\alpha_{T_{\max} }+\dc\psi)^n\geq c_0>0,   \quad\forall\; \psi\in\PSH(X,\alpha_{T_{\max} })\cap \mathcal{C}^{\infty}(X).
	\end{equation} This condition implies that the volume of $(X,\omega(t))$ does not collapse to zero as $t\to T_{\max }^-$.
	
	\begin{theorem}\label{thm: bigness}
		Let $\alpha$ be a nef (1,1) form satisfying the uniformly non-collapsing condition~\eqref{eq: noncollapsing}. If $X$ admits a Hermitian metric $\omega_X$ such that $v_+(\omega_X)<+\infty$ then $\alpha$ is big.
		
		Conversely, if $\alpha$ is big and $v_-(\omega_X)>0$ then $\alpha$ is uniformly non-collapsing.
	\end{theorem}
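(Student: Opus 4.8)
I treat the two implications separately; the forward one is the Hermitian analogue of the fact that a nef class with positive top self-intersection is big (Demailly--Paun in the K\"ahler case), while the converse is a quantitative comparison of Monge--Amp\`ere masses of the kind used in Lemma~\ref{lem: C0_estimate}. \emph{Forward direction.} Fix $A>0$ with $\alpha\leq A\omega_X$ and, using nefness of $\alpha$, for each $\varepsilon\in(0,1)$ a smooth $\psi_\varepsilon$ with $\sup_X\psi_\varepsilon=0$ so that $\alpha_\varepsilon:=\alpha+\varepsilon\omega_X+\dc\psi_\varepsilon\geq0$ is smooth and semipositive. I would first transfer the hypotheses into uniform bounds $c_0'\leq v_-(\alpha_\varepsilon)\leq v_+(\alpha_\varepsilon)\leq C_1$ for all small $\varepsilon$. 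The upper bound is immediate: for bounded $\alpha_\varepsilon$-psh $w$ one has $\alpha_\varepsilon+\dc w\leq(A+1)\omega_X+\dc(w+\psi_\varepsilon)$ with $w+\psi_\varepsilon$ being $(A+1)\omega_X$-psh, so $\int_X(\alpha_\varepsilon+\dc w)^n\leq(A+1)^nv_+(\omega_X)=:C_1<\infty$. For the lower bound, write
\[
\int_X(\alpha_\varepsilon+\dc w)^n=\int_X\bigl(\alpha+\dc(w+\psi_\varepsilon)\bigr)^n+\sum_{k=1}^{n}\binom{n}{k}\varepsilon^{k}\int_X\omega_X^{k}\wedge\bigl(\alpha+\dc(w+\psi_\varepsilon)\bigr)^{n-k};
\]
the first term is $\geq c_0$ by \eqref{eq: noncollapsing} (after smoothing $w+\psi_\varepsilon$), while the mixed masses in the correction are controlled, uniformly in $w$, by integration by parts once one uses that $v_+(\omega_X)<\infty$ makes the normalized family of $\omega_X$-psh functions $L^1$-bounded (one may as well take $\omega_X$ Gauduchon). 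This yields $v_-(\alpha_\varepsilon)\geq c_0-C\varepsilon$, hence $v_-(\alpha_\varepsilon)\geq c_0/2=:c_0'>0$ for $\varepsilon$ small.

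\emph{Forward direction, continued.} Now fix such a small $\varepsilon$ and any $x_0\in X$, pick smooth probability measures $\mu_\tau$ supported in $B(x_0,\tau)$ with $\mu_\tau\to\delta_{x_0}$, and solve the Hermitian Monge--Amp\`ere equations $(\alpha_\varepsilon+\dc w_{\varepsilon,\tau})^n=c_{\varepsilon,\tau}\mu_\tau$, $\sup_Xw_{\varepsilon,\tau}=0$, solvable with $w_{\varepsilon,\tau}$ bounded (Tosatti--Weinkove, Kolodziej--Nguyen, Guedj--Lu) and with $c_0'\leq v_-(\alpha_\varepsilon)\leq c_{\varepsilon,\tau}\leq v_+(\alpha_\varepsilon)\leq C_1$. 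Letting $\tau\to0$, the standard a priori estimates away from $x_0$ (where the right-hand side eventually vanishes) give a limit $w_\varepsilon$ with $\alpha_\varepsilon+\dc w_\varepsilon\geq0$ carrying a concentrated mass $\geq c_0'$ at $x_0$, whence $\nu(w_\varepsilon,x_0)\geq\lambda_0>0$ with $\lambda_0$ depending only on $n$ and $c_0'$. Thus for every $x_0$ and every small $\varepsilon$ we obtain a closed real current $S_{\varepsilon,x_0}:=\alpha+\dc(w_\varepsilon+\psi_\varepsilon)\geq-\varepsilon\omega_X$ with $\nu(S_{\varepsilon,x_0},x_0)\geq\lambda_0$. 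The Demailly--Paun patching procedure---combining these almost-positive currents with the semipositive representatives $\alpha_\varepsilon$ of $\alpha$, the uniform bounds $c_0'\leq v_-(\alpha_\varepsilon)\leq v_+(\alpha_\varepsilon)\leq C_1$ playing the role of the cohomological volume in the K\"ahler argument---then produces $\delta_0>0$ and a quasi-psh $\varphi$ with $\alpha+\dc\varphi\geq\delta_0\omega_X$. Finally, Demailly's regularization (\cite{demailly1992regularization}) applied to the closed positive current $\alpha+\dc\varphi\geq\delta_0\omega_X$ furnishes $\varphi_m$ with analytic singularities and $\alpha+\dc\varphi_m\geq(\delta_0-\varepsilon_m)\omega_X\geq\tfrac{\delta_0}{2}\omega_X$ for $m$ large, i.e. $\alpha$ is big.

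\emph{Converse.} Suppose $\alpha+\dc\rho\geq\delta\omega_X$ with $\rho$ having analytic singularities and $\sup_X\rho=0$; bounded $\alpha$-psh functions exist (e.g. $\rho_k:=\max(\rho,-k)$), so $v_-(\alpha)$ is meaningful. Given a bounded $\alpha$-psh $u$, set $v_k:=P_{\delta\omega_X}(u-\rho_k)$, a bounded $\delta\omega_X$-psh function whose Monge--Amp\`ere measure, by Theorem~\ref{thm: envelope}, is carried by the contact set $D_k=\{v_k+\rho_k=u\}$. On the open set $\{\rho>-k\}$ the functions $v_k+\rho\leq u$ are $\alpha$-psh, so Bedford--Taylor's maximum principle (Lemma~\ref{lem: max-princ}) gives $\mathbf{1}_{D_k}(\alpha+\dc(v_k+\rho))^n\leq\mathbf{1}_{D_k}(\alpha+\dc u)^n$, while $\alpha+\dc\rho\geq\delta\omega_X$ forces $(\alpha+\dc(v_k+\rho))^n\geq(\delta\omega_X+\dc v_k)^n$ there. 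Hence
\[
\int_X(\alpha+\dc u)^n\geq\int_{D_k\cap\{\rho>-k\}}(\delta\omega_X+\dc v_k)^n\geq\delta^nv_-(\omega_X)-(\delta\omega_X+\dc v_k)^n\bigl(\{\rho\leq-k\}\bigr).
\]
Since $u-\rho_k$ increases with $k$, the $v_k$ increase to a bounded $\delta\omega_X$-psh limit, and $\{\rho\leq-k\}\downarrow\{\rho=-\infty\}$ being pluripolar the last term tends to $0$; therefore $\int_X(\alpha+\dc u)^n\geq\delta^nv_-(\omega_X)>0$, which is the uniform non-collapsing of $\alpha$.

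\emph{Main obstacle.} I expect the genuine difficulty to be the Demailly--Paun step passing from a uniformly positive Lelong number at every prescribed point to a bona fide Hermitian current $\alpha+\dc\varphi\geq\delta_0\omega_X$: there is no cohomology available to track intersection numbers, and one must glue the concentrated currents while spending only uniformly bounded mass---this is exactly where $v_+(\omega_X)<\infty$ (automatic on compact complex surfaces) is indispensable---together with the closely related point of transferring \eqref{eq: noncollapsing} to the approximants $\alpha_\varepsilon$. By contrast, the a priori estimates for the concentrating Monge--Amp\`ere solutions are routine given Section~\ref{sect: estimate} and \cite{guedj2021quasi}, and the converse is elementary.
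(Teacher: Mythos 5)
Your converse direction is essentially sound and close in spirit to the argument of Guedj--Lu that the paper cites and omits: projecting onto $P_{\delta\omega_X}(u-\rho_k)$, using Theorem~\ref{thm: envelope} and Lemma~\ref{lem: max-princ} on the contact set, and letting the pluripolar set $\{\rho=-\infty\}$ absorb the error is the right mechanism (you should be a little careful that Lemma~\ref{lem: max-princ} is stated for bounded potentials, so the comparison with $v_k+\rho$ must be localized to $\{\rho>-k\}$, but this is routine).

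The forward direction, however, has a genuine gap at its central step, and it is exactly the step you yourself flag as the ``main obstacle.'' You produce, for each point $x_0$ and each $\varepsilon$, an almost-positive current with Lelong number $\geq\lambda_0$ at $x_0$, and then invoke ``the Demailly--Paun patching procedure'' to manufacture a single quasi-psh $\varphi$ with $\alpha+\dc\varphi\geq\delta_0\omega_X$. That procedure is not available here: it relies on the fact that in the K\"ahler (or at least $\partial\bar\partial$-closed) setting the currents being glued live in a fixed Bott--Chern class whose intersection numbers are cohomological invariants, and the gluing is done by taking maxima of potentials \emph{within that class} while tracking the spent positivity against $\{\alpha\}^{n-1}\cdot\{\omega\}$-type quantities. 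For a non-closed $(1,1)$-form $\alpha$ there is no such class and no such bookkeeping; the quantities $\int_X(\alpha+\dc v)^n$ genuinely depend on $v$, which is the whole reason $v_\pm$ enter the statement. The preliminary step is also shakier than you suggest: the signed mixed terms $\int_X\omega_X^k\wedge(\alpha+\dc(w+\psi_\varepsilon))^{n-k}$ are not obviously bounded below uniformly in $w$, and bounding them is a problem of the same nature as the one you are trying to solve. The paper's actual proof avoids mass concentration entirely: by Lamari's Hahn--Banach duality, bigness is equivalent to $\int_X\alpha\wedge\eta^{n-1}\geq\delta\int_X\omega_X\wedge\eta^{n-1}$ for all Gauduchon metrics $\eta$; assuming this fails along a sequence $\eta_\varepsilon$, one solves $(\alpha_\varepsilon+\dc u_\varepsilon)^n=c_\varepsilon\,\omega_X\wedge\eta_\varepsilon^{n-1}$ (Tosatti--Weinkove), notes $c_\varepsilon\geq c_0$ by the non-collapsing hypothesis, and applies the Chiose--Popovici Cauchy--Schwarz inequality
\[
\int_X(\alpha_\varepsilon+\dc u_\varepsilon)\wedge\eta_\varepsilon^{n-1}\cdot\int_X(\alpha_\varepsilon+\dc u_\varepsilon)^{n-1}\wedge\omega_X\;\geq\;\frac{c_\varepsilon}{n},
\]
where the first factor is $O(\varepsilon)$ because $\eta_\varepsilon$ is Gauduchon and the second is bounded by $v_+(\omega_X)$ --- this is precisely where the hypothesis $v_+(\omega_X)<+\infty$ is used. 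You would need to replace your patching step by an argument of this kind (or prove a Hermitian Demailly--Paun gluing theorem, which is not in the literature you cite) for the forward direction to stand.
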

	When $\alpha$ is semi-positive or closed the result was proved by Guedj--Lu~\cite[Theorem 4.6, Theorem 4.9]{guedj2022quasi2}, answering the transcendental Grauert--Riemenschneider conjecture~\cite[Conjecture 0.8]{demailly2004numerical}. For our purposes, we would like to extend it when $\alpha$ is no longer closed.
	\begin{proof}
		The proof of this theorem follows the same lines as in~\cite[Theorem 4.6]{guedj2022quasi2}, which is based on ideas from Chiose~\cite{chiose2016kahler}, so we omit it here. 
	\end{proof}
	\begin{remark} When $\omega_0$ is closed, or more generally, is a Guan--Li metric, i.e., $\dc\omega_0=\dc\omega^2_0=0$, the condition~\eqref{eq: noncollapsing} is simply written as $\int_X \alpha^n_{T_{\max} }>0$. 
		The assumption $v_+(\omega_X)< \infty$ or $v_-(\omega_X)>0$ is independent of the choice of the Hermitian $\omega_X$, as shown in~\cite[Proposition 3.2]{guedj2022quasi2}. For additional examples of manifolds where such conditions hold, we refer the reader to~\cite{angella2022plurisigned}.
		
		This result is a slight generalization of~\cite[Theorem 4.3]{nguyen2016complex}, where $\alpha$ is closed semi-positive, and $X$ admits a pluriclosed metric, i.e., $\dc\omega_X=0$. 
	\end{remark}
	As a consequence of Theorem~\ref{thm: bigness}, we give a slight improvement of the main result in~\cite{tosatti2012plurisubharmonic} (see also~\cite[Theorem 4.1]{nguyen2016complex}) which extends the one of Demailly~\cite{demailly1993numerical} to the non-K\"ahler setting.
	\begin{theorem} Let $X$ be a compact complex $n$-manifold equipped with a Hermitian metric $\omega_X$ satisfying $v_+(\omega_X)<\infty$.
		Let $\alpha$ be a nef (1,1) form. Assume that  $x_1,\ldots,x_N\in X$ are fixed points and $\tau_1,\ldots,\tau_N$ are positive constants 
		such that
		\begin{equation*}
		0<\sum_{j=1}^N\tau_j^n< \int_X(\alpha+\dc\psi)^n,\; \forall\, \psi\in \PSH(X,\alpha)\cap\mathcal{C}^\infty(X).
		\end{equation*}
		Then, there exists an $\alpha$-psh function $\f$ with logarithmic poles at $x_1,\ldots,x_N\in X$:
		\[\f(z-x_j)\leq \tau_j\log\|z-x_j\|+O(1)\] in local coordinates near $x_j$, for all $j=1,\ldots,N$.
	\end{theorem}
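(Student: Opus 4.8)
The plan is to reduce the statement to a pseudoeffectivity assertion on the blow-up of $X$ at the $x_j$'s, and there to run the concentration-of-mass technique of Demailly~\cite{demailly1993numerical}, in the Hermitian form used in~\cite{tosatti2012plurisubharmonic,nguyen2016complex}, the required quantitative positivity being furnished by (the proof of) Theorem~\ref{thm: bigness}.

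First I would pass to the blow-up $\pi\colon\hat X\to X$ at $x_1,\dots,x_N$, with exceptional divisors $E_j=\pi^{-1}(x_j)\simeq\mathbb P^{n-1}$, and fix a Hermitian metric $\om_{\hat X}$ on $\hat X$. For each $j$ pick a smooth Hermitian metric $h_j$ on $\mathcal O_{\hat X}(E_j)$ with defining section $s_j$, so that $[E_j]=\theta_j+\dc g_j$ with $\theta_j$ smooth and $g_j:=\log|s_j|_{h_j}$ quasi-psh, smooth off $E_j$, with generic Lelong number $1$ along $E_j$, and $\pi^*\log\|z-x_j\|=g_j+O(1)$ near $E_j$. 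Put $\beta:=\pi^*\alpha-\sum_{j}\tau_j\theta_j$. If $\hat\rho\in\PSH(\hat X,\beta)$, then $\hat\f:=\hat\rho+\sum_j\tau_j g_j$ satisfies $\pi^*\alpha+\dc\hat\f=(\beta+\dc\hat\rho)+\sum_j\tau_j[E_j]\ge0$ and has generic Lelong number $\ge\tau_j$ along $E_j$; transporting $\hat\f$ through $\pi$ off the exceptional set and taking the upper semicontinuous extension across the $x_j$ (legitimate because $\hat\f\le\tau_j\,\pi^*\log\|z-x_j\|+O(1)$ near $E_j$, so $\hat\f$ is bounded above there) yields an $\alpha$-psh function $\f\not\equiv-\infty$ with $\f(z)\le\tau_j\log\|z-x_j\|+O(1)$ near $x_j$. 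Hence it suffices to show that $\{\beta\}$ is pseudoeffective on $\hat X$.

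The next step is to record the positivity inputs on $\hat X$. Since $\alpha$ is nef, $\pi^*\alpha$ is nef on $\hat X$. Since $\pi$ is a biholomorphism off a pluripolar set and $\pi_*\hat\psi$ is a bounded $\alpha$-psh function whenever $\hat\psi$ is a smooth $\pi^*\alpha$-psh function, one gets $\int_{\hat X}(\pi^*\alpha+\dc\hat\psi)^n\ge\int_X(\alpha+\dc\pi_*\hat\psi)^n\ge\sum_j\tau_j^n$, so $\pi^*\alpha$ is uniformly non-collapsing on $\hat X$; moreover $v_+(\om_{\hat X})<+\infty$, this being inherited from $(X,\om_X)$ under the modification $\pi$ (cf.~\cite[Prop.~3.2]{guedj2022quasi2}, and automatic if $n=2$). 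In particular Theorem~\ref{thm: bigness} applies on $\hat X$ and $\pi^*\alpha$ is big there; heuristically, the intersection identities $\pi^*\alpha\cdot E_j^{\,n-k}=0$ $(k\ge1)$ and $(E_j)^n=(-1)^{n-1}$ give $\beta^n=\alpha^n-\sum_j\tau_j^n>0$, which is exactly Demailly's numerical criterion read on $\hat X$. To produce a genuine positive current in $\{\beta\}$ I would then argue as in the proof of Theorem~\ref{thm: bigness}: fix $\gamma_\e:=\pi^*\alpha+\e\om_{\hat X}+\dc\hat\psi_\e$ Hermitian and a smooth probability measure $\mu_\e$ concentrating, as $\e\to0$, on shrinking neighbourhoods of $\bigcup_jE_j$ with mass proportions close to $\tau_j^n/c_\e$; solve $(\gamma_\e+\dc u_\e)^n=c_\e\mu_\e$ with $\sup_{\hat X}u_\e=0$ via~\cite{tosatti2010complex}, where uniform non-collapsing gives $c_\e\ge c_0>\sum_j\tau_j^n$ and $v_+(\om_{\hat X})<+\infty$ controls $\int_{\hat X}(\gamma_\e+\dc u_\e)^{n-1}\wedge\om_{\hat X}$ exactly as in the Popovici--Chiose step of that proof. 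Passing to a weak limit $\hat T=\lim(\gamma_\e+\dc u_\e)\ge0$ in $\{\pi^*\alpha\}$ and comparing $u_\e$ with explicit local solutions of the Monge--Amp\`ere equation along the $\mathbb P^{n-1}$-bundles $E_j$, one gets $\nu(\hat T,E_j)\ge\tau_j$ for all $j$; by the Siu decomposition $\hat T-\sum_j\tau_j[E_j]$ is then a positive current representing $\{\beta\}$, and pushing down as above finishes the proof.

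The main obstacle is the last point: quantifying precisely how much of the limiting Monge--Amp\`ere mass the current $\hat T$ deposits along each $E_j$, i.e.\ transferring Demailly's local Lelong-number estimates for Monge--Amp\`ere equations with concentrated right-hand side to the non-closed Hermitian setting; this is where the strict inequality $\sum_j\tau_j^n<\int_X(\alpha+\dc\psi)^n$ is used in full strength. A secondary technical point, needed to invoke the machinery of Theorem~\ref{thm: bigness} on the blow-up, is the verification that $v_+(\om_{\hat X})<+\infty$ is stable under the modification $\pi$.
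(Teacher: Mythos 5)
Your overall strategy---reduce to a positivity statement and run Demailly's mass-concentration technique, with Theorem~\ref{thm: bigness} supplying the quantitative positivity---is in the right spirit: the paper's own proof is precisely ``$\alpha$ is big by Theorem~\ref{thm: bigness}, and the rest follows \cite[Theorem 1.3]{tosatti2016clalabi}'', where the concentration is performed \emph{directly on $X$}, by solving $(\alpha_\e+\dc u_\e)^n=c_\e\mu_\e$ with $\mu_\e$ concentrating at the points $x_j$ and comparing $u_\e$ locally with the model $\tau_j\log\|z-x_j\|$. Your detour through the blow-up is a different implementation, and it introduces two genuine gaps. First, every step you run on $\hat X$ (the Chiose--Popovici inequality, hence the lower bound on $c_\e$ and the bound on $\int_{\hat X}(\gamma_\e+\dc u_\e)^{n-1}\wedge\om_{\hat X}$) needs $v_+(\om_{\hat X})<+\infty$. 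You claim this is ``inherited'' from $v_+(\om_X)<+\infty$ via \cite[Prop.~3.2]{guedj2022quasi2}, but that proposition only asserts independence of the choice of Hermitian metric on a \emph{fixed} manifold; it says nothing about invariance under blow-up, which is not automatic for $n\ge 3$ and would require a separate argument. The theorem's hypothesis lives on $X$, and the paper's argument never leaves $X$, so it never meets this issue.

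Second, and more seriously, the entire content of the theorem beyond bigness is the quantitative Lelong-number estimate for the limit current, and you leave it as an acknowledged ``main obstacle'' rather than proving it. Moreover, by blowing up you have replaced concentration at \emph{points}---where the local model $\tau_j\log\|z-x_j\|$ has Monge--Amp\`ere measure exactly $\tau_j^n$ times a Dirac mass, so the comparison is purely local and is already available in the Hermitian setting (\cite{tosatti2012plurisubharmonic}, \cite[Theorem 4.1]{nguyen2016complex})---by concentration along the divisors $E_j\simeq\mathbb{P}^{n-1}$, where the analogous accounting of mass is an intersection-theoretic computation on the top self-intersection of $E_j$ that has no evident meaning for the non-closed forms considered here. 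So the step you defer is not a routine transfer of Demailly's local estimate: in your formulation it is strictly harder than in the paper's. (Your reduction itself is sound: pseudoeffectivity of $\pi^*\alpha-\sum_j\tau_j\theta_j$ does push down to the desired $\alpha$-psh function with logarithmic poles, and the local, quasi-psh version of Siu's theorem does let you subtract $\sum_j\tau_j[E_j]$ once the Lelong numbers along the $E_j$ are established.)
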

	\begin{proof}
		By Theorem~\ref{thm: bigness}, we know that $\alpha$ is big. The rest of the proof follows in the same manner as in~\cite[Theorem 1.3]{tosatti2016clalabi}.
	\end{proof}
	
	We go back to the Chern--Ricci flow.
	Observe that one can deduce
	the Chern--Ricci flow~\eqref{crf} to a parabolic complex Monge--Ampère equation
	\begin{equation*}
	\frac{\partial\f_t}{\partial t}=\log\left[ \frac{(\alpha_t+\dc\f_t)^n}{\omega_0^n}\right],\quad \alpha_t+\dc\f>0, \; \f(0)=0
	\end{equation*} where $\alpha_t:=\omega_0-t\textrm{Ric}(\omega_0)$. We assume that the form $\alpha_{T_{\max}}$ is uniformly non-collapsing.
	By Theorem~\ref{thm: bigness},
	there exists a function $\rho$ with analytic singularities such that
	\[\alpha_{T_{\max}}+\dc\rho\geq 2\delta_0\omega_0\] for some $\delta_0>0$. We observe that
	\begin{equation}\label{eq: estimate_max}
	\begin{split}
	\alpha_t+\dc\rho &=\frac{1}{T_{\max}}\left((T_{\max}-t)(\omega_0+\dc\rho)+t(\alpha_{T_{\max}}+\dc\rho) \right)\\
	&\geq \delta_0\omega_0
	\end{split}
	\end{equation} for $t\in[T_{\max}-\varepsilon,T_{\max}]$ with sufficiently small $\varepsilon>0$. Set $$\Omega:=X\setminus \{\rho=-\infty\}.$$
	We establish uniform $\mathcal{C}^\infty_{\rm loc}$ estimates on $\Omega$.
	
	\begin{lemma}\label{lem: bound}
		There is a uniform constant $C_0>0$ such that on $[0,T_{\max})\times X$ we have
		\begin{enumerate}[label=(\roman*)]
			\item $\f\leq C_0$;
			\item $\dot{\f}\leq C_0$;
			\item $\f\geq \rho-C_0$;
			\item $\dot{\f}\geq C_0\rho-C_0$.
		\end{enumerate}
	\end{lemma}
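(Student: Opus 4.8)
The plan is to derive these four estimates from the a priori estimates established in Section~\ref{sect: estimate}, specialized to the setting $\theta=\omega_0$, $\chi=-\textrm{Ric}(\omega_0)$, $\mu=\omega_0^n$ (so $\psi^\pm=0$), and $\f_0=0$. The crucial structural input is~\eqref{eq: estimate_max}: on the time interval $[T_{\max}-\varepsilon,T_{\max}]$ the forms $\alpha_t+\dc\rho$ dominate $\delta_0\omega_0$, and on the complementary compact interval $[0,T_{\max}-\varepsilon]$ the forms $\alpha_t$ are already uniformly comparable to $\omega_0$ (hence $\alpha_t+\dc\rho$ dominates a Hermitian form there too, using $\omega_0+\dc\rho\ge\delta_0\omega_0$); so in fact $\alpha_t+\dc\rho\geq \delta_0'\omega_0$ on all of $[0,T_{\max})$ for a uniform $\delta_0'>0$. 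The role played in Section~\ref{sect: estimate} by $\psi_0$ (the equisingular approximant) is now played directly by $\rho$ itself, since $\f_0=0$ has zero Lelong numbers and all the finiteness of exponential integrals is automatic ($e^{-q\rho/t}\in L^1$ because $\rho$ has analytic singularities, hence zero Lelong numbers, hence $c(\rho)=+\infty$).

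First I would prove (i): apply Proposition~\ref{prop: C0upper} verbatim with $T=T_{\max}$-ish (more precisely, exhaust $[0,T_{\max})$ by $[0,T]$ and note the bound $\sup_X|\phi|+\max(c,0)T_{\max}+\sup_X\f_0$ is uniform in $T<T_{\max}$), giving $\f\le C_0$. Then (ii) follows from Proposition~\ref{prop: upper1dot}, which gives $\dot\f_t\le (\f_t-\f_0)/t+n = \f_t/t+n$; this is fine for $t$ bounded away from $0$, and near $t=0$ one instead invokes short-time smoothness of the Chern-Ricci flow (the flow is smooth on $[0,\varepsilon_0]\times X$ since $\omega_0$ is a smooth metric), so $\dot\f$ is bounded on $[0,\varepsilon_0]\times X$ directly; combining the two ranges gives a uniform $C_0$. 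For (iii), since $\f_0=0$ has zero Lelong number, $c(\f_0)=+\infty$, so the hypothesis $p^*/2c(\f_0)<T_{\max}$ is vacuously satisfied and Theorem~\ref{thm: C0estimate} applies with $\psi_0$ replaced by $\rho$: we get $\f_t\ge (1-bt/T)\rho-C\ge \rho-C$ for $t$ in a fixed subinterval, and again short-time smoothness handles small $t$ (where $\f_t\ge \f_0-o(1)=\rho-\dc$-type bound is even easier via Proposition~\ref{prop: lowbound}, using $\rho\le 0$). Finally (iv) follows from Proposition~\ref{prop: 1dot} (or its cleaner cousin Proposition~\ref{prop: zero1}): $\dot\f_t\ge n\log(t-\varepsilon)+A(\Psi_t-\f_t)-C \ge C_0\rho-C_0$ after using (i) and (iii) to absorb the $-\f_t$ and $\Psi_t$ terms into $C_0\rho-C_0$, and using that $n\log(t-\varepsilon)$ stays bounded below on any interval $[\varepsilon',T_{\max})$ with $\varepsilon'>\varepsilon$; once more short-time smoothness covers $t$ near $0$.

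The main obstacle I anticipate is the behavior near $t=0$: the estimates in Section~\ref{sect: estimate} are all of the form ``for $t\in[\varepsilon,T]$'' and degenerate as $\varepsilon\to 0$ (e.g.\ the $n\log(t-\varepsilon)$ term in (iv), the $1/t$ in (ii)), whereas Lemma~\ref{lem: bound} asserts uniform bounds on all of $[0,T_{\max})$. The resolution is that at $t=0$ the initial datum $\omega_0$ is a genuine smooth Hermitian metric, so Tosatti–Weinkove's short-time existence gives a smooth solution on $[0,\varepsilon_0]\times X$ for some $\varepsilon_0>0$, on which $\f,\dot\f,\Delta\f$ are all bounded by compactness; one then patches this with the estimates from Section~\ref{sect: estimate} on $[\varepsilon_0,T_{\max})$. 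The one genuinely new point to check carefully is that the constants coming from Theorem~\ref{thm: C0estimate}, Proposition~\ref{prop: 1dot}, and Proposition~\ref{prop: C0upper} are uniform as $T\uparrow T_{\max}$ — this is where~\eqref{eq: estimate_max} does the essential work, ensuring $\delta_0'$ and the various $L^q$-integrals of $e^{-q\rho/t}$ (with $t$ bounded below) do not blow up — and that the potentials $\phi,\phi_1,u$ produced by~\cite[Theorem 3.4]{guedj2021quasi} can be chosen once and for all since the data $(\omega_0,\omega_0^n)$ is fixed.
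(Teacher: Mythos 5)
Your treatment of (i) and (ii) matches the paper (classical maximum principle, patched with smoothness of the flow on any compact subinterval of $[0,T_{\max})$), but your route to (iii) and (iv) has a genuine gap, and you have also located the difficulty at the wrong end of the time interval. The problem in Lemma~\ref{lem: bound} is not $t\to 0$ (there the flow is smooth and all four bounds are trivial on $[0,T_{\max}-\varepsilon]\times X$ by compactness); it is $t\to T_{\max}$, where the reference forms $\alpha_t$ lose positivity. The estimates of Section~\ref{sect: estimate} that you invoke are built for the opposite situation: they assume $\theta_t$ stays Hermitian and, crucially, that $\theta_S\geq (1-a)\theta$ with $a<1/2$ for some $S<T<T_{\max}$ --- i.e.\ $\alpha_S\geq\frac12\omega_0$ --- which fails for $S$ near $T_{\max}$ precisely when $\alpha_{T_{\max}}$ is degenerate (the interesting case). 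So Theorem~\ref{thm: C0estimate} and Proposition~\ref{prop: 1dot} cannot be ``applied with $\psi_0$ replaced by $\rho$'' up to $T_{\max}$; their constants are not uniform as $T\uparrow T_{\max}$, and asserting that \eqref{eq: estimate_max} ``does the essential work'' does not repair this, because those proofs lean on positivity of $\theta_t$ itself, not of the twisted form $\alpha_t+\dc\rho$. A side error: your claim that $\rho$ has zero Lelong numbers (hence $c(\rho)=+\infty$) because it has analytic singularities is false --- such a function has \emph{positive} Lelong numbers on its polar set --- so the finiteness of the exponential integrals $E_1,E_2$ with $\psi_0=\rho$ is not automatic (it can be salvaged by rescaling $\rho$ using nefness, but you would have to say so).

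The paper's proof is instead a short, direct argument rebuilt around the surviving positivity $\alpha_t+\dc\rho\geq\delta_0\omega_0$ on $[T_{\max}-\varepsilon,T_{\max}]$. For (iii) one applies the minimum principle to $\psi+At$ with $\psi=\f-\rho$: the bound holds on $[0,T_{\max}-\varepsilon]$ for free, and at an interior minimum on $(T_{\max}-\varepsilon,T']\times X$ one has $\dc\psi\geq 0$, hence $\omega=\alpha_t+\dc\rho+\dc\psi\geq\delta_0\omega_0$ and $\dot\f\geq n\log\delta_0$, which for $A$ large forces the minimum back into $[0,T_{\max}-\varepsilon]$. For (iv) one applies the parabolic minimum principle to $Q=\dot\f+A\psi+Bt$, using $\tr_\omega(\alpha_t+\dc\rho)\geq\delta_0\tr_\omega\omega_0$ and the elementary inequality $A\log y-\delta_0y^{1/n}\leq C$ to reach a contradiction at an interior minimum. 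If you want to salvage your plan, you would have to redo the Section~\ref{sect: estimate} arguments with $\alpha_t+\dc\rho$ playing the role of the Hermitian reference form --- which is, in effect, exactly what the paper's direct computation does.
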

	\begin{proof}
		The proofs of $(i)$ and $(ii)$ follow directly from the classical maximum principle; see e.g.,~\cite[Lemma 4.1]{tosatti2015evolution} or~\cite{tian2006kahler}. 
		
		For $(iii)$, we set $\psi:=\f-\rho$.
		Note that the function $\psi+At\geq -C$ holds on $[0,T_{\max}-\varepsilon]$ with $\varepsilon$ as above. Fix $T_{\max}-\varepsilon<T'<T_{\max}$, assume that $\psi+At$ achieves its minimum at $(t_0,x_0)\in [0,T']\times X$ with $t_0\in (\tmax-\varepsilon,T']$. Note that $x_0\in\Omega$.
		We compute at $(t_0,x_0)$,
		\begin{equation*}
		\begin{split}
		0\geq 	\frac{\partial \psi}{\partial t}+A&=\log\frac{(\alpha_t+\dc\rho+\dc\psi)^n}{\omega_0^n}+A\\
		&\geq \log \frac{(\delta_0\omega_0)^n}{\omega_0^n}+A\geq -C+A
		\end{split}
		\end{equation*} where we have used the estimate~\eqref{eq: estimate_max}. If we choose $A>C$, then we get a contradiction. Thus, we obtain the lower bound for $\psi$, completing the proof.
		
		For $(iv)$, we apply the minimum principle to
		\[Q= \dot{\f}+A\psi+Bt\] where $A$ and $B$ are large constants that will be chosen later. Our goal is to show that $Q\geq -C$ on $X\times[0,\tmax)$. 
		As above, we observe that $Q\geq -C$ on $[0,\tmax-\varepsilon] \times X$. Given any $\tmax-\varepsilon<T'<\tmax$, suppose that $Q$ achieves its minimum on $[0,T']\times X$ at some point $(t_0,x_0)$ with $t_0\in (\tmax-\varepsilon,T']$. Note that $x_0\in \Omega$. At this point, we have
		\begin{align*}
		0\geq \left(\frac{\partial}{\partial t}-\Delta_\omega\right)Q&=-\tr_\omega\textrm{Ric}(\omega_0)+A\dot{\f}-An+A\tr_\omega(\alpha_t+\dc\rho)+B\\
		&\geq \delta_0\tr_\omega\omega_0+A\log\frac{\omega^n}{\omega_0^n}+\tr_\omega\omega_0-An+B
		\end{align*} where $A$ is chosen so large that $$(A-1)(\alpha_t+\dc\rho)-\textrm{Ric}(\omega_0)\geq \omega_0$$ for $t\in[\tmax-\varepsilon,\tmax]$. But since $A\log y-\delta_0 y^{1/n}$ is bounded from above for $y>0$ the arithmetic-geometric inequality yields
		\[\delta_0\tr_\omega\omega_0+A\log\frac{\omega^n}{\omega_0^n}\geq \delta_0\left( \frac{\omega_0^n}{\omega^n}\right)^{1/n}+A\log\frac{\omega^n}{\omega_0^n}\geq -C_1 \] for uniform constant $C_1>0$. If we choose $B=C_1+An$, we obtain
		\[ 0\geq \left(\frac{\partial}{\partial t}-\Delta_\omega\right)Q\geq \tr_\omega\omega_0>0\] which leads to a contradiction. Thus, the desired estimate follows.
	\end{proof}
	\begin{lemma}\label{lem: C2bound}
		There is a uniform constant $C>0$ such that on $ [0,T_{\max})\times X$ we have
		$$\tr_{\omega_0}\omega(t)\leq Ce^{-C\rho}.$$
	\end{lemma}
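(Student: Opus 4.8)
The plan is to prove this localized second-order estimate on the Zariski open set $\Omega=X\setminus\{\rho=-\infty\}$ by the Phong--Sturm/Guan--Li maximum-principle argument, exactly as in the proof of Theorem~\ref{thm: C2estimate} (and as in Collins--Tosatti~\cite{collins2015kahler} and Gill~\cite{gill2011convergence}), but in the simpler situation where the reference measure $\omega_0^n$ is smooth, so that no factors $e^{\psi^\pm}$ enter. First I would dispose of the region $[0,T_{\max}-\varepsilon]\times X$: there the flow is smooth on a compact manifold over a compact time interval, so $\tr_{\omega_0}\omega$ is bounded, and since $\rho\le 0$ we have $e^{-C\rho}\ge 1$, so the asserted inequality is trivial. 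It therefore suffices to work on $[T_{\max}-\varepsilon,T_{\max})$, where the key positivity \eqref{eq: estimate_max}, namely $\alpha_t+\dc\rho\ge\delta_0\omega_0$, is available; I fix $T'\in(T_{\max}-\varepsilon,T_{\max})$ and carry out the argument on $[T_{\max}-\varepsilon,T']\times X$ with all constants independent of $T'$.

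Next I would introduce $u:=\f-\rho+C_0+1$, which by Lemma~\ref{lem: bound}(iii) satisfies $u\ge 1$, and which, since $\f\le C_0$ while $-\rho\to+\infty$ on $\partial\Omega$, tends to $+\infty$ near $\{\rho=-\infty\}$. I then consider the test function $H:=\log\tr_{\omega_0}\omega-\gamma(u)$, where $\gamma$ is a concave increasing function with $\gamma\ge 0$ and $\gamma(s)\to+\infty$, to be taken of the form $\gamma(s)=As+\log s$ as in the proof of Theorem~\ref{thm: C2estimate}. Because $u\to+\infty$ on $\partial\Omega$, $H$ tends to $-\infty$ there, so it attains its maximum at some $(t_0,x_0)\in[T_{\max}-\varepsilon,T']\times\Omega$; if $t_0=T_{\max}-\varepsilon$ we conclude by smoothness on the compact slab, so assume $t_0>T_{\max}-\varepsilon$. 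At $(t_0,x_0)$ I would run the maximum-principle computation, using the Chern--Ricci identity $\partial_t\omega=-\textrm{Ric}(\omega)$ from \eqref{crf} (so that the $\textrm{Ric}(\omega)$-contributions cancel), Guan--Li coordinates, the boundedness of the curvature of the fixed metric $\omega_0$, the Phong--Sturm control of the torsion terms of $\omega$ via the first-order condition $\partial H=0$, the positivity \eqref{eq: estimate_max} to absorb $-\gamma'(u)\bigl(n-\delta_0\tr_\omega\omega_0\bigr)$ after enlarging $A$ so that $A(\alpha_t+\dc\rho)-\textrm{Ric}(\omega_0)\ge\omega_0$ on $[T_{\max}-\varepsilon,T_{\max}]$, and the zeroth- and first-order bounds of Lemma~\ref{lem: bound}(i),(ii),(iv) (two-sided control of $\dot\f$ up to a multiple of $\rho$) in place of Propositions~\ref{prop: upper1dot} and \ref{prop: 1dot} and Theorem~\ref{thm: C0estimate}. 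This reproduces verbatim the chain of inequalities \eqref{eq: delta}--\eqref{eq: trace}, yielding at $(t_0,x_0)$ a bound $\log\tr_{\omega_0}\omega\le Au+C$, hence $H(t_0,x_0)\le C$. Evaluating $H$ at an arbitrary point and using $\f\le C_0$ then gives $\log\tr_{\omega_0}\omega\le(A+1)u+C\le-(A+1)\rho+C'$, and taking $C$ larger than $\max(A+1,e^{C'})$ and than the constant from the trivial slab yields $\tr_{\omega_0}\omega\le Ce^{-C\rho}$ on all of $[0,T_{\max})\times X$.

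The argument is entirely computational, and the main obstacle is, as in Theorem~\ref{thm: C2estimate}, handling the non-K\"ahler torsion: the quantity $(\partial_t-\Delta_\omega)\log\tr_{\omega_0}\omega$ produces error terms of the form $|\partial\tr_{\omega_0}\omega|_\omega^2/(\tr_{\omega_0}\omega)^2$ and $\tr_\omega\omega_0/(\tr_{\omega_0}\omega)^2$ that have no analogue in the K\"ahler setting, and they must be re-absorbed using the first-order condition at the maximum together with the Guan--Li frame; the logarithmic correction $\log s$ in $\gamma$ is precisely what keeps the coefficient $\gamma'(u)/(-\gamma''(u))$ of these leftover terms growing only polynomially in $u$, so that they are dominated by the good negative term $-\gamma'(u)\delta_0\tr_\omega\omega_0$. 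A secondary point to keep track of is that \eqref{eq: estimate_max} is valid only on $[T_{\max}-\varepsilon,T_{\max}]$, which is the reason the argument is localized to that subinterval.
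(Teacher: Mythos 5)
Your proposal is essentially the paper's own proof: the paper runs the same Phong--Sturm maximum principle for $Q=\log\tr_{\omega_0}\omega-A\psi+e^{-\psi}$ with $\psi=\f-\rho+C_0$, which is your $H$ with the concave increasing function $\gamma(s)=As-e^{-s}$ in place of $As+\log s$, together with the same localization to $[T_{\max}-\varepsilon,T']$, the same use of \eqref{eq: estimate_max} and Lemma~\ref{lem: bound}(iii) as a barrier forcing the maximum into $\Omega$, and the same dichotomy/Lemma~\ref{lem34} endgame. One caution: do not actually substitute Lemma~\ref{lem: bound}(iv) for the $\dot{\f}$-term in the chain, since the bound $-\dot{\f}\le C(1-\rho)$ introduces $-\rho(x_0)$ at the unknown maximum point, which is \emph{not} controlled by $u(t_0,x_0)$ (when $\f$ is near its lower barrier $\rho-C_0$ one has $u\approx 1$ while $-\rho$ is huge); instead keep $-\dot{\f}=\log(\omega_0^n/\omega^n)$ and close via the dichotomy, Lemma~\ref{lem34} and the global upper bound $\dot{\f}\le C_0$ of Lemma~\ref{lem: bound}(ii), exactly as the paper does.
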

	\begin{proof}
		Set $\psi=\f-\rho+C_0\geq 0$.
		We apply the maximum principle to \[Q=\log\tr_{\omega_0}\omega-A\psi+e^{-\psi}\]
		where $A>0$ will be determined later.  The idea of using the last term in $Q$ is due to Phong and Sturm~\cite{phong2010dirichlet} and was used in the context of the Chern--Ricci flow in~\cite{tosatti2013chern,tosatti2015evolution,to2018regularizing}. Note that $e^{-\psi}\in (0,1]$.
		
		It suffices to show that $Q$ is uniformly bounded from above. Again, it follows from the definition of $Q$ that $Q\leq C$ on $[0,T_{\max}-\varepsilon]\times X$ for a uniform $C>0$. Fix $\tmax-\varepsilon<T'<T_{\max}$, and suppose that $Q$ achieves its maximum at some point $(t_0,x_0)\in  [0,T']\times X$ with $t\in(\tmax -\varepsilon,T']$. In what follows, we compute at this point. From~\cite[Prop. 3.1]{tosatti2015evolution} (also~\cite[(4.2)]{tosatti2015evolution}) we have
		\[ \left(\frac{\partial}{\partial t}-\Delta_{\omega}\right)\log\tr_{\omega_0}\omega\leq \frac{2}{(\tr_{\omega_0}\omega)^2}\textrm{Re}(g^{\Bar{q}k}(T_0)^p_{kp}\partial_{\Bar{q}}\tr_{\omega_0}\omega)+C\tr_\omega \omega_0,\]
		where $(T_0)^p_{kp}$ denote the torsion terms corresponding to $\omega_0$. At the maximum point $(x_0,t_0)$ of $Q$, we have $\partial_iQ=0$, hence \[\frac{1}{\tr_{\omega_0}\omega}\partial_i\tr_{\omega_0}\omega-A\partial_i\psi-e^{-\psi}\partial_i\psi=0.\]
		Thus, the Cauchy-Schwarz inequality yields
		\begin{equation*}
		\begin{split}
		\left| \frac{2}{(\tr_{\omega_0}\omega)^2}\textrm{Re}(g^{\Bar{q}k}(T_0)^p_{kp}\partial_{\Bar{q}}\tr_{\omega_0}\omega)\right|&\leq \left| \frac{2}{(\tr_{\omega_0}\omega)^2}\textrm{Re}((A+e^{-\psi})g^{\Bar{q}k}(T_0)^p_{kp}\partial_{\Bar{q}}\psi\right|\\
		&\leq e^{-\psi}|\partial\psi|^2_\omega+C(A+1)^2e^\psi\frac{\tr_\omega \omega_0}{(\tr_{\omega_0}\omega)^2}.
		\end{split}
		\end{equation*} for uniform $C>0$ only depending on the torsion term.
		It thus follows that, at the point $(t_0,x_0)$,
		\begin{equation}\label{eq: c2Q}
		\begin{split}
		0\leq \left(\frac{\partial}{\partial t}-\Delta_{\omega}\right)Q &\leq C(A+1)^2e^\psi\frac{\tr_\omega \omega_0}{(\tr_{\omega_0}\omega)^2}+C\tr_\omega \omega_0\\
		&\quad -(A+e^{-\psi})\dot{\f} +(A+e^{-\psi})\tr_\omega(\omega -(\alpha_t+\dc\rho))\\
		&\leq C(A+1)^2e^\psi \frac{\tr_\omega \omega_0}{(\tr_{\omega_0}\omega)^2}+(C-A\delta_0)\tr_\omega \omega_0+(A+1)\log\frac{\omega_0^n}{\omega^n}
		\end{split}
		\end{equation} where we have used $\alpha_t+\dc\rho\geq \delta_0\omega_0$. If at $(x_0,t_0)$,  $(\tr_{\omega_0}   \omega)^2\leq e^\psi C(A+1)^2$  then at the same point we obtain \[Q\leq C+\frac{1}{2}\psi-A\psi+e^{-\psi}\leq C+1\] since $\psi\geq 0$, we are done.
		Otherwise, we choose $A=\delta_0^{-1}(C+2)$. Hence, from~\eqref{eq: c2Q} one gets
		\[\tr_{\omega}\omega_0\leq C\log\frac{\omega_0^n}{\omega^n}+C.\]
		By Lemma~\ref{lem34}, we obtain
		\[ \tr_{\omega_0}   \omega\leq n(\tr_\omega\omega_0)^{n-1}\frac{\omega^n}{\omega_0^n}\leq C\frac{\omega^n}{\omega_0^n}\left(\log\frac{\omega_0^n}{\omega^n}\right)^{n-1}+C\leq C'\]since $\omega^n/\omega_0^n\leq C_0$ by Lemma~\ref{lem: bound}, and $y\mapsto y|\log y|^{n-1}$ is bounded from above as $y\to 0$. Thanks to Lemma~\ref{lem: bound} (iii), $Q$ is bounded from above at its maximum, finishing the proof.
	\end{proof}
	\begin{proof}[Proof of Theorem~\ref{thmA}] 
		Let $K\subset \Omega$ be any compact set. It follows from Lemma~\ref{lem: bound} and Lemma~\ref{lem: C2bound} that there exists a constant $C_K>0$ such that on $[0,\tmax)\times K$, \[C_K^{-1}\omega_0\leq\omega(t)\leq C_K\omega_0.\]
		Applying the local higher-order estimates of Gill~\cite[Section 4]{gill2011convergence}, we obtain uniform $\mathcal{C}^\infty$ estimates for $\omega(t)$ on compact subsets of $\Omega$. 
		Consequently, there exists a constant $c_K$ such that $\frac{\partial}{\partial t}\omega=-\textrm{Ric}(\omega)\leq c_K\omega$, on $[0,\tmax) \times K$. This implies that $e^{-c_Kt}\omega(t)$ decreases in $t$ and is bounded from below. Hence, $\omega(t)$ converges to $\omega_{\tmax}$ as $t\to \tmax$, and since we have uniform estimates on compact subsets of $\Omega$, we see that the convergence is in $\mathcal{C}_{\rm loc}^\infty(\Omega)$.
		This finishes the proof. 
	\end{proof}
	
	\section{The Chern--Ricci flow on varieties with log terminal singularities}\label{sect: crf_lt}
	In this section, we extend our previous analysis to the case of compact complex varieties with {\em mild singularities}. We refer the reader to~\cite[Section 5]{eyssidieux2009singular} for a brief introduction to the complex analysis on mildly singular varieties. 
	

	We assume here that $Y$ is a $\mathbb{Q}$-Gorenstein variety, i.e., $Y$ is a normal complex space such that its canonical divisor $K_Y$ is $\mathbb{Q}$-Cartier.
	We denote the singular set of $Y$ by $Y_{\rm sing}$ and let $Y_{\rm reg}:=Y\setminus Y_{\rm sing}$.
	Given a log resolution of singularities $\pi:X \to Y$ (which may and will always be chosen to be an isomorphism over $Y_{\rm reg}$ ), there exists  a unique (exceptional) $\mathbb{Q}$-divisor $\sum a_iE_i$ with simple normal  crossings (snc for short) such that
	\begin{equation*}
	K_X=\pi^*K_Y+\sum_ia_iE_i,
	\end{equation*} The coefficients $a_i\in\mathbb{Q}$ are called the {\em discrepancies} of $Y$ along $E_i$. 
	\begin{definition}
		We say that $Y$ has \emph{log terminal} (\emph{lt} for short) singularities if and only if $a_i>-1$ for all $i$.  
	\end{definition}
	
	The following definition of \emph{adapted measure} is introduced in \cite[Section 6]{eyssidieux2009singular}:
	\begin{definition}
		Let $h$ be a smooth hermitian metric on the $\mathbb{Q}$-line bundle $\mathcal{O}_Y (K_Y)$. The corresponding adapted measure $\mu_{Y,h}$ on $Y_{\rm reg}$ is locally defined by choosing a nowhere vanishing  section $\sigma$ of $mK_Y$ over a small open set $U$ and setting
		\begin{equation*}
		\mu_{Y,h}:=\frac{(i^{mn^2}\sigma\wedge\Bar{\sigma})^{1/m}}{|\sigma|_{h^m}^{2/m}}.
		\end{equation*}
	\end{definition}  
	
	The point of the definition is that the measure $\mu_{Y,h}$ does not depend on the choice of $\sigma$, so it is globally defined. The arguments above show that $Y$ has log terminal singularities if and only if  $\mu_{Y,h}$ has a finite total mass on $Y$, which can be considered as a Radon measure on the whole of $Y$. Then $\chi=\dc\log\mu_{Y,h}$ is a well-defined smooth closed $(1,1)$-form on $Y$.

	Given a Hermitian form $\omega_Y$ on $Y$, there exists a unique hermitian metric $h=h(\omega_Y)$ of $K_Y$ such that 
	\[\omega_Y^n=\mu_{Y,h}.\]
	We have the following definition.
	\begin{definition}
		The \emph{Ricci curvature form} of $\omega_Y$ is $\textrm{Ric}(\omega_Y):=-\dc\log h$.
	\end{definition}
	We recall the {\em slope} of a quasi-psh function $\phi$ at $y$ in the sense of~\cite{berman2019kahler}. Choosing local generators $(f_j)$ of the maximal ideal $\mathfrak{m}_y$ of $\mathcal{O}_{Y,y}$, we define
	\[s(\phi,y)=\sup\{s\geq 0: \f\leq s\log\sum|f_j|+O(1)\}.\]
	Note that this definition is independent of the choice of $(f_j)$. By \cite[Theorem A.2]{berman2019kahler} there is $C>0$ such that for any log resolution $\pi:X\to Y$, \[  \nu(\phi\circ \pi,E)\leq Cs(\phi,y)\]
	with $E$ a prime divisor lying above $y$. In particular, the Lelong numbers of $\phi\circ\pi$ are sufficiently small if the $s(\phi,y)$ is also sufficiently small at all points $y\in Y$. We refer to~\cite{pan2025-Lelong-numbers} for related results.

	\medskip
	Applying the analysis in the previous section, we obtain the existence of the Chern–Ricci flow on log terminal singularities. This generalizes the result in \cite[Theorem E]{dang2021chern}.
	\begin{theorem}
		Let $Y$ be a compact complex variety with log terminal singularities. Assume that $\theta_0$ is a Hermitian metric such that 
		\begin{equation*}
		T_{\rm max}:=\sup \{t>0:\, \exists\; \psi\in\mathcal{C}^\infty(Y) \,\text{such that}\; \theta_0-t\textrm{Ric}(\theta_0)+\dc\psi >0 \}>0.
		\end{equation*} Assume that $S_0=\theta_0+\dc\phi_0$ is a positive (1,1)-current with small slopes. Then, there exists a family $(\omega_t)_{t\in[0,\tmax)}$ of positive (1,1) current  on $Y$ starting with $S_0$ such that
		\begin{enumerate}
			\item $\omega_t=\theta_0-t\textrm{Ric}(\theta_0)+\dc\f_t$ are positive (1,1) currents;
			\item $\omega_t\to S_0$ weakly as $t\to 0^+$;
			\item for each $\varepsilon>0$, there exists a Zariski open set $\Omega_\varepsilon$ such that on $[\varepsilon,\tmax)\times \Omega_\varepsilon$,  $\omega$ is smooth and
			\[\frac{\partial\omega}{\partial t}=-\textrm{Ric}(\omega).\]
		\end{enumerate}
	\end{theorem}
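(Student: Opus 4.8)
The plan is to transfer the equation, via a log resolution $\pi\colon X\to Y$ which is an isomorphism over $Y_{\rm reg}$, to the degenerate parabolic Monge--Amp\`ere equation~\eqref{pcmae} on the smooth manifold $X$, to invoke Theorem~\ref{thmB}, and to descend the resulting weak solution to $Y$. The first point to deal with is that $\theta_0-t\,\textrm{Ric}(\theta_0)$ need not be semi-positive for $t\in[0,\tmax)$, so it cannot be fed directly into~\eqref{pcmae}. To remedy this, fix $T_1\in(0,\tmax)$ and a smooth function $\psi_1$ on $Y$ with $\theta_0-T_1\textrm{Ric}(\theta_0)+\dc\psi_1>0$, and set, for $t\in[0,T_1]$,
\[
\hat\theta_t:=\theta_0-t\,\textrm{Ric}(\theta_0)+\tfrac{t}{T_1}\dc\psi_1=\Big(1-\tfrac{t}{T_1}\Big)\theta_0+\tfrac{t}{T_1}\big(\theta_0-T_1\textrm{Ric}(\theta_0)+\dc\psi_1\big),
\]
an affine family of Hermitian metrics on $Y$. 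With $\hat\phi_t:=\phi_t-\tfrac{t}{T_1}\psi_1$ and $\hat\mu:=e^{\psi_1/T_1}\theta_0^n$ one has $\hat\theta_t+\dc\hat\phi_t=\theta_0-t\textrm{Ric}(\theta_0)+\dc\phi_t=:\omega_t$, so a solution of $\partial_t\hat\phi_t=\log[(\hat\theta_t+\dc\hat\phi_t)^n/\hat\mu]$ with $\hat\phi_0=\phi_0$ gives $\partial_t\phi_t=\log(\omega_t^n/\theta_0^n)$, and on $Y_{\rm reg}$ the usual Chern--Ricci computation (recall $\theta_0^n=\mu_{Y,h}$) yields $\partial_t\omega_t=-\textrm{Ric}(\theta_0)+\dc\log(\omega_t^n/\theta_0^n)=-\textrm{Ric}(\omega_t)$.

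Next I pull everything back by $\pi$. The form $\theta:=\pi^*\theta_0$ is smooth, semi-positive and big: as recalled after Definition~\ref{def: big} there is a $\theta$-psh $\rho$ with analytic singularities, smooth exactly on $\{\rho>-\infty\}=X\setminus\textrm{Exc}(\pi)=\pi^{-1}(Y_{\rm reg})$, with $\theta+\dc\rho\geq\delta\omega_X$. The family $\pi^*\hat\theta_t=\theta+t\chi$, $\chi:=-\pi^*\textrm{Ric}(\theta_0)+\tfrac1{T_1}\dc\pi^*\psi_1$, is smooth and semi-positive on $[0,T_1]$ ($\pi^*\textrm{Ric}(\theta_0)$ being the curvature of the smooth metric $\pi^*h$ on the $\mathbb{Q}$-line bundle $\pi^*K_Y$). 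Using $K_X=\pi^*K_Y+\sum_i a_iE_i$ with discrepancies $a_i>-1$ (log terminal), the pulled-back adapted measure is $\mu:=\pi^*\hat\mu=e^{\psi^+-\psi^-}dV_X$ with $\psi^\pm$ quasi-psh, smooth on $\pi^{-1}(Y_{\rm reg})\subset\{\rho>-\infty\}$, and $e^{-\psi^-}=\prod_{a_i<0}|s_i|^{2a_i}\in L^p(X)$ for every $1<p<\min_i|a_i|^{-1}$; thus $\mu$ is of the type admitted in~\eqref{pcmae}. Finally $\pi^*\phi_0$ is $\theta$-psh, and by the slope estimate of~\cite{berman2019kahler} recalled above its Lelong numbers on $X$ are controlled by $\sup_y s(\phi_0,y)$; the assumption that $S_0$ has sufficiently small slopes means precisely that, for a choice of $p>1$ close to $1$ and of $T_1<\tmax$ close to $\tmax$, the integrability index satisfies $p^*/2c(\pi^*\phi_0)<T_1$.

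With these data Theorem~\ref{thmB} (together with the a priori estimates of Section~\ref{sect: estimate}) produces a weak solution $\hat u_t$, $t\in(0,T_1)$, of $\partial_t\hat u_t=\log[(\pi^*\hat\theta_t+\dc\hat u_t)^n/\mu]$ which is bounded above, converges to $\pi^*\phi_0$ in $L^1(X)$, and is smooth and classical on $[\varepsilon,T_1)\times\Omega_\varepsilon'$ for each $\varepsilon>0$, with $\Omega_\varepsilon'=X\setminus(\textrm{Exc}(\pi)\cup E_{c(\varepsilon)}(\pi^*\phi_0))$ Zariski open. Put $\tilde u_t:=\hat u_t+\tfrac{t}{T_1}\pi^*\psi_1$; being $\pi^*\theta_0$-psh and bounded above, $\tilde u_t$ descends through $\pi$ (by normality of $Y$) to a $\theta_0$-psh function $\phi_t$ on $Y$, and we set $\omega_t:=\theta_0-t\textrm{Ric}(\theta_0)+\dc\phi_t$. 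Now $(1)$ holds since $\pi^*\omega_t=\pi^*\hat\theta_t+\dc\hat u_t\geq0$; $(2)$ holds since $\tilde u_t\to\pi^*\phi_0$ in $L^1(X)$ forces $\phi_t\to\phi_0$ in $L^1(Y)$, hence $\omega_t\to\theta_0+\dc\phi_0=S_0$ weakly; and $(3)$ holds with $\Omega_\varepsilon:=\pi\big(\Omega_\varepsilon'\cap\pi^{-1}(Y_{\rm reg})\big)$, a nonempty Zariski open subset of $Y$ on which $\phi_t$ is smooth, $\partial_t\phi_t=\log(\omega_t^n/\theta_0^n)$, and hence $\partial_t\omega_t=-\textrm{Ric}(\omega_t)$ by the first paragraph. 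Letting $T_1\nearrow\tmax$ and invoking the maximality (``maximally stretched'') uniqueness of Section~\ref{sect: unique}, the families obtained for different $T_1$ agree on their common range and glue to one family over $[0,\tmax)$.

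The main obstacle is the bookkeeping at the $X$--$Y$ interface: that the pulled-back adapted measure really lies in some $L^p$, $p>1$ (this is exactly where ``log terminal'', i.e.\ $a_i>-1$, is used); that $p^*/2c(\pi^*\phi_0)<\tmax$ can be arranged, which couples the choice of $p$, of $T_1$ and of the admissible size of the slopes of $S_0$ (since $p$ is itself constrained by the discrepancies); and that the weak solution on $X$ genuinely descends to a weak Chern--Ricci flow on $Y$ --- in particular that $\phi_t$ extends across $Y_{\rm sing}$ and that $\partial_t\omega_t=-\textrm{Ric}(\omega_t)$ really holds on the regular part, the spurious $\tfrac1{T_1}\dc\psi_1$ terms being absorbed by the compensating change of measure $\hat\mu=e^{\psi_1/T_1}\theta_0^n$.
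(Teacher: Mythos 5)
Your proposal is correct and follows essentially the same route as the paper: reduce the flow on $Y$ to the degenerate parabolic Monge--Amp\`ere equation \eqref{pcmae} on a log resolution via an affine Hermitian family built from a potential $\psi$ realizing the positivity of $\theta_0-T\,\textrm{Ric}(\theta_0)$, use the log terminal condition to put $e^{-\psi^-}=\prod_{a_i<0}|s_i|^{2a_i}$ in $L^p$, use the slope/Lelong-number comparison to verify $p^*/2c(\pi^*\phi_0)<\tmax$, and invoke Theorem~\ref{thmB}. The only differences are cosmetic (your reference measure $e^{\psi_1/T_1}\theta_0^n$ versus the paper's adapted measure $\mu_{Y,h}$ for the metric with curvature $\chi$ differ by a bounded smooth factor) plus some extra bookkeeping you supply --- the descent to $Y$ by normality and the gluing as $T_1\nearrow\tmax$ --- which the paper leaves implicit.
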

	\begin{proof}
		It is classical that solving the (weak) Chern--Ricci flow is equivalent to solving a complex Monge--Amp\`ere equation flow. 
		Let $\chi$ be a closed smooth (1,1) form that represents $c^{\rm BC}_1(K_Y)$. Given $T\in (0,T_{\rm max})$, there is a function $\psi_T\in\mathcal{C}^\infty(Y)$ such that $\theta_0-t\textrm{Ric}(\theta_0)+\dc\psi_T >0$.
		We set for $t\in[0,T]$
		\begin{equation*}
		\hat{\theta}_t:=\theta_0+t\chi,\; \text{with}\, \chi= -\textrm{Ric}(\theta_0)+\dc\frac{\psi_T}{T},
		\end{equation*} which defines an affine path of Hermitian forms. Since $\chi$ is a smooth representative of $c_1^{\rm BC}(K_Y)$, one can find a smooth metric $h$ on the $\mathbb{Q}$-line bundle $\mathcal{O}_Y(K_Y)$ with curvature form $\chi$. We obtain $\mu_{Y,h}$, the adapted measure corresponding to $h$.
		The Chern-Ricci flow is equivalent to the following complex Monge-Amp\`ere flow 
		\begin{equation}\label{eq: maf}  (\hat{\theta}_t+\dc\phi_t)^n=e^{\partial_t{\phi}}\mu_{Y,h}.
		\end{equation} 
		Let $\pi:X \to Y$ be a log resolution of singularities. We have seen that the measure 
		$$\mu:=\pi^*\mu_{Y,h}=fdV \quad\text{where}\;\; f=\prod_{i}|s_i|^{2a_i}$$ has  poles (corresponding to $a_i<0$) or zeroes (corresponding to $a_i>0$) along the exceptional divisors $E_i=(s_i=0)$, and $dV$ is a smooth volume form. Passing to the resolution, the flow~\eqref{eq: maf} becomes
		\begin{equation}\label{eq: cmaf}
		\frac{\partial \f}{\partial t}=\log\left[ \frac{(\theta_t+\dc\f_t)^n}{\mu}\right]
		\end{equation} where $\theta_t:=\pi^*\hat{\theta}_t$ and $\f:=\pi^*\phi$. Since $(\hat{\theta}_t)_{t\in [0,T]}$ is a smooth family of Hermitian forms, it follows that the family of semi-positive forms $[0,T]\ni t\mapsto\theta_t$ satisfies all our requirements. We also have that  ${\theta}:=\pi^*\theta_0$, the latter is smooth, semi-positive, and big but no longer hermitian. We fix a ${\theta}$-psh function ${\rho}$ with analytic singularities along a divisor $E=\pi^{-1}(Y_{\rm sing})$ such that $\theta+\dc{\rho}\geq 2\delta\omega_X$ with $\delta>0$. 
		If we set $\psi^{+}=\sum_{a_i>0}2a_i\log|s_i|$, $\psi^{-}=\sum_{a_i<0}-2a_i\log|s_i|$, we observe that $\psi^{\pm}$ are quasi-psh functions with logarithmic poles along the exceptional divisors, smooth on $X\setminus \textrm{Exc}(\pi)=\pi^{-1}(Y_{\rm reg})$, and $e^{-\psi^-}\in L^p(dV)$ for some $p>1$. 
		We observe that since the Lelong numbers $\nu(\f_0,x)$ are sufficiently small, we have the assumption $p^*/2c(\f_0)<\tmax$ by Skoda's integrability theorem.
		The result therefore follows from Theorem~\ref{thmB}. 
	\end{proof}
	\bibliographystyle{plain}
	\bibliography{bibfile}	
	
\end{document}